\def\tr{\mathop{\text{tr}}\kern.2ex}
\def\P{{\mathbb P}}
\def\E{{\mathbb E}}
\def\R{{\mathbb R}}
\def\Z{{\mathbb Z}}
\renewcommand{\Pr}{{\mathbb P}}
\newcommand{\textand}{,~}
\newcommand{\mbinom}[2]{\Bigl(\mkern1mu\begin{array}{@{}c@{}}#1\\#2\end{array}\mkern1mu\Bigr)}
\newcolumntype{L}[1]{>{\raggedright\let\newline\\\arraybackslash\hspace{0pt}}m{#1}}
\newcolumntype{C}[1]{>{  \centering\let\newline\\\arraybackslash\hspace{0pt}}m{#1}}
\newcolumntype{R}[1]{>{ \raggedleft\let\newline\\\arraybackslash\hspace{0pt}}m{#1}}
\newcolumntype{d}[1]{D{.}{.}{#1}}
\newcolumntype{H}{>{\setbox0=\hbox\bgroup}c<{\egroup}@{}}
\newcolumntype{Z}{>{\setbox0=\hbox\bgroup}c<{\egroup}@{\hspace*{-\tabcolsep}}}
\newcommand*{\@rowstyle}{}
\newcommand*{\rowstyle}[1]{
  \gdef\@rowstyle{#1}%
  \@rowstyle\ignorespaces%
}
\newcolumntype{=}{
  >{\gdef\@rowstyle{}}%
}
\newcolumntype{+}{
  >{\@rowstyle}%
}
\numberwithin{equation}{section}
\newtheorem{theorem}{Theorem}[section]
\newtheorem{lemma}{Lemma}[section]
\newtheorem{proposition}{Proposition}[section]
\newtheorem{assumption}{Assumption}[section]
\newtheorem{corollary}{Corollary}[section]
\providecommand{\customgenericname}{}
\newcommand{\newcustomtheorem}[2]{%
  \newenvironment{#1}[1]
  {%
   \renewcommand\customgenericname{#2}%
   \renewcommand\theinnercustomgeneric{##1}%
   \innercustomgeneric
  }
  {\endinnercustomgeneric}
}
\theoremstyle{definition}
\newtheorem{example}{Example}[section]
\newtheorem{remark}{Remark}[section]
\newcommand{\nb}[1]{{{#1}}}
\begin{document}

\setlength{\abovedisplayskip}{5pt}
\setlength{\belowdisplayskip}{5pt}
\setlength{\abovedisplayshortskip}{5pt}
\setlength{\belowdisplayshortskip}{5pt}
\hypersetup{colorlinks,breaklinks,urlcolor=blue,linkcolor=blue}

\title{\LARGE High-dimensional consistent independence testing with maxima of rank correlations}

\author{
Mathias Drton\thanks{Department of Mathematics, Technical University
  of Munich, 85748 Garching b. M\"unchen, Germany; e-mail: {\tt mathias.drton@tum.de}},
~~Fang Han\thanks{Department of Statistics, University of Washington, Seattle, WA 98195, USA; e-mail: {\tt fanghan@uw.edu}},~~
Hongjian Shi\thanks{Department of Statistics, University of Washington, Seattle, WA 98195, USA; e-mail: {\tt hongshi@uw.edu}}}


\date{}

\maketitle

\begin{abstract} 
  Testing mutual independence for high-dimensional observations is a
  fundamental statistical challenge. Popular tests based on linear and
  simple rank correlations are known to be incapable of detecting
  non-linear, non-monotone relationships, calling for methods that can
  account for such dependences.
  To address this challenge, we propose a family of tests that are
  constructed using maxima of pairwise rank correlations that permit
  consistent assessment of pairwise independence.  Built upon a newly
  developed Cram\'{e}r-type moderate deviation theorem for degenerate
  U-statistics, our results cover a variety of rank correlations
  including Hoeffding's $D$, Blum--Kiefer--Rosenblatt's $R$, and
  Bergsma--Dassios--Yanagimoto's $\tau^*$. The proposed tests are
  distribution-free {in the class of multivariate distributions with continuous margins}, implementable without the need for permutation,
  and are shown to be rate-optimal against sparse alternatives under
  the Gaussian copula
  model. As a by-product of the study, we reveal an identity between the aforementioned three rank correlation statistics, and hence make a step towards proving a  conjecture of Bergsma and Dassios. 

\end{abstract}

{\bf Keywords:} Degenerate U-statistics, extreme value distribution, independence testing, maximum-type tests, rank statistics, rate-optimality.


\section{Introduction}

Let $\mX=(X_1,\ldots,X_p)^\top$ be a random vector taking values in
$\R^p$ and having all univariate marginal distributions continuous.
This paper is concerned with testing the null hypothesis
\begin{align}\label{eq:H0}
H_0: X_1,\ldots,X_p \text{ are mutually independent},
\end{align}
based on $n$ independent realizations $\mX_1,\ldots,\mX_n$ of
$\mX$. Testing $H_0$ is a core problem in multivariate statistics that
has attracted the attention of statisticians for decades; see e.g.\
the exposition in \citet[Chap.~9]{MR1990662} or
\citet[Chap.~11]{MR652932}.  Traditional methods such as the
likelihood ratio test, Roy's largest root test \citep{MR0092296}, and
Nagao's $L_2$-type test \citep{MR0339405} target the case
where the dimension $p$ is small and perform poorly when $p$ is
comparable to or even larger than $n$.  A line of recent work seeks to
address this issue and develops tests that are suitable for modern
applications involving data with large dimension $p$.  This high-dimensional regime is in the focus of our work, which develops
distribution theory based on asymptotic regimes where $p=p_n$
increases to infinity with $n$.

Many tests of independence in high dimensions have been proposed
recently.  For example, \citet{MR2572444} and \citet{MR3127857} derived
corrected likelihood ratio tests for Gaussian data.  Using covariance/correlation statistics such as Pearson's $r$, Spearman's
$\rho$, and Kendall's $\tau$, \citet{MR2988403}, \citet{MR3641402}, \citet{MR3757515},
and \citet{bao2017tracy} proposed revised versions of Roy's largest
root test.  \citet{MR2234197} and \citet{MR3766953} derived corrected
Nagao's $L_2$-type tests.  Finally, \citet{MR2052906},
\citet{MR2327033}, and \citet{MR3737306} proposed tests using the
magnitude of the largest pairwise correlation
statistics. 
Subsequently we shall refer to tests of this latter type as
maximum-type tests.

The aforementioned approaches are largely built on linear and simple
rank correlations. 
These, however, are incapable of detecting more complicated
non-linear, non-monotone dependences as \citet{MR0029139} noted in his
seminal paper.  Recent work thus proposed the use of consistent rank
\citep{MR3178526}, kernel-based \citep{NIPS2007_3201,MR3744710}, and
distance covariance/correlation statistics \citep{MR2382665}.
However, 
much less is known about high-dimensional tests of $H_0$ that use
these more involved statistics.  Notable exceptions include
\citet{MR3766953} and \citet{MR3798874}. There, the authors combined
Nagao's $L_2$-type methods with rank and distance covariance
statistics that in a tour de force are shown to
weakly converge to a Gaussian limit under the null. In addition,
\citet{MR3798874} proved that an infeasible version of their test is
rate-optimal against a Gaussian dense alternative (Gaussian
distribution with equal correlation), while still little is known about optimality of 
Leung and Drton's.

In this paper, we derive maximum-type tests that are counterparts of
Leung--Drton and Yao--Zhang--Shao $L_2$-type ones.  As  noted in
\citet{MR3737306}, \citet{MR3766953},  and \citet{MR3798874},
maximum-type tests will be more sensitive to  strong but sparse
dependence.
{Designed to assess pairwise independence consistently,} our tests are formed using statistics based on pairwise rank correlation
measures such as Hoeffding's $D$ \citep{MR0029139},
Blum--Kiefer--Rosenblatt's $R$ \citep{MR0125690}, and
Bergsma--Dassios--Yanagimoto's $\tau^*$ \citep{MR3178526,zbMATH03366369}. 
{In particular,} assuming the pair of random variables $X_i$ and $X_j$ to have
{a joint distribution that is not only continuous but also absolutely continuous,} these measures all satisfy the following three desirable
properties summarized in \citet{MR3842884}:
\begin{description}
  \setlength{\itemindent}{-.5em}
\item[\rm\em I-consistency.] If  $X_i$ and $X_j$ are independent, the correlation measure is zero.
\item[\rm\em D-consistency.] If  $X_i$ and $X_j$ are dependent, the correlation measure is nonzero.
\item[\rm\em Monotonic invariance.] The correlation measure is invariant to monotone transformations. 
\end{description}

{We remark that invariance under invertible (and not just monotonic) transformations was considered in work on self-equitable measures of dependence \citep{MR3200177}. This leads to notions of mutual information whose estimates are different from and usually more challenging to handle than the rank correlation measures we consider here; see \citet{MR3992389} and references therein.  Indeed, as we shall review in Section \ref{sec:rank}, the aforementioned correlation measures are naturally estimated via U-statistics, which despite being degenerate have important special properties.}


The contributions of our work are threefold.  First, we prove that all
the maximum-type test statistics {proposed} in Section~\ref{sec:tests}
have a null distribution that converges to a (non-standard) Gumbel distribution under
high-dimensional asymptotics. {This is in contrast to the results in \citet{MR3737306}, where those rank correlation measures that permit consistent assessment of pairwise independence are excluded from the analysis. This exclusion is due to the lack of necessary probability tools like   Cram\'er-type moderate deviation bounds for degenerate U-statistics, which are newly developed in this paper.} {Additionally,} no distributional assumption {except for marginal  continuity} is
required for this result\nb{,} and the parameters for the Gumbel limit can be explicitly
given. 
This allows one to avoid permutation analysis in problems of larger scale.
Second, we conduct a power analysis and give explicit conditions on
a sparse local alternative under which our proposed tests have power
tending to one.  Third, we show that the maximum-type tests based on
Hoeffding's $D$, Blum--Kiefer--Rosenblatt's $R$, and
Bergsma--Dassios--Yanagimoto's $\tau^*$ are all rate-optimal in the class
of Gaussian (copula) distributions with sparse and strong dependence as characterized in the power analysis. {To our knowledge this is the first rate-optimality result for a feasible test that permits consistent assessment of pairwise independence.} These
results are developed in Section~\ref{sec:theory}.  The theoretical
advantages of our tests are highlighted in simulation studies
(Section~\ref{sec:simulation}).  {Lastly, we note that, as an interesting by-product
of the study, we give an identity among the above three statistics
that helps make a step towards proving Bergsma--Dassios's conjecture
about general D-consistency of $\tau^*$.  This observation, along with other discussions, is given in Section \ref{sec:discussion}. }
All proofs {and additional simulation results} are deferred to a supplement.

\paragraph{Notation}

The sets of real, integer, and positive integer numbers are denoted
$\R$, $\Z$, and $\Z^+$, respectively.  The cardinality of a set
$\mathcal{A}$ is written $\#\mathcal{A}$.  For $m\in\Z^+$, we define
$[m]=\{1,2,\ldots,m\}$ and write $\mathcal{P}_m$ for the set of all
$m!$ permutations of $[m]$. Let
$\mv=(v_1,\ldots,v_p)^\top\in\R^p$,
$\fM=[\fM_{jk}]\in \R^{p\times p}$, and $I,J$
be two subsets of $[p]$.  Then $\mv_I$ is the sub-vector of $\mv$ with entries
indexed by $I${, i.e., $\mv_I=(v_{i_1},v_{i_2},\ldots,v_{i_{\# I}})^\top$ with $i_1<i_2<\cdots<i_{\# I}$ and $\{i_1,\ldots,i_{\# I}\}=I$. Both} $\fM_{I,J}$ and $\fM[I,J]$ are used to refer
to the sub-matrix of $\fM$ with rows indexed by $I$ and columns
indexed by
$J$. 
The matrix ${\rm diag}(\fM)\in \R^{p\times p}$ is the diagonal matrix
whose diagonal is the same as that of $\fM$.  We write $\fI_p$ and $\fJ_p$ for the
identity matrix and all-ones matrix in $\R^{p\times p}$, respectively.    For a function $f:\cX\to \R$, we define
$\norm{f}_{\infty}:=\max_{x\in\cX}|f(x)|$.  
The greatest integer less than or equal to $x\in\R$ is denoted by 
$\lfloor x\rfloor$.  The symbol $\ind(\cdot)$ is used for indicator
functions.  For any two real sequences $\{a_n\}$ and $\{b_n\}$, we
write $a_n \lesssim b_n$, $a_n=O(b_n)$, or equivalently
$b_n \gtrsim a_n$, if there exists $C>0$ such that $|a_n|\leq C|b_n|$
for any large enough $n$. We write $a_n \asymp b_n$ if both
$a_n \lesssim b_n$ and $a_n\gtrsim b_n$ hold. Write $a_n=o(b_n)$ if
for any $c>0$, $|a_n|\leq c|b_n|$ holds for any large enough $n$.
Throughout, $c$ and $C$ refer to positive absolute constants
whose values may differ in different parts of the paper.



\section{Rank correlations and degenerate U-statistics}\label{sec:rank}

This section introduces the pairwise rank correlations that
will later be aggregated in a maximum-type test of the independence
hypothesis in~(\ref{eq:H0}).  We present these correlations in a general
U-statistic framework.  In the sequel, unless otherwise stated, the
random vector $\mX$ is assumed to have continuous margins, that is,
its marginal distributions are continuous, though not necessarily absolutely
continuous.

Let $\mX_1,\dots,\mX_n$ be {independent} copies of $\mX$, with
$\mX_i=(X_{i,1},\dots,X_{i,p})^\top$.  Let
$j\ne k\in[p]$, and let $h: (\R^{2})^m\to\R$ be a fixed kernel of order
$m$.  The kernel $h$ defines a \emph{U-statistic} of order $m$:
\begin{align}\label{eq:Uh}
\hat U_{jk}=\mbinom{n}{m}^{-1}\sum_{1\le i_1< i_2<\cdots< i_m\le n}
  h\Big\{
  \Big(\begin{matrix}
    X_{i_1,j} \\     X_{i_1,k}
  \end{matrix}\Big),\ldots,\Big(\begin{matrix}
    X_{i_m,j} \\     X_{i_m,k}
  \end{matrix}\Big)\Big\}.
\end{align}
For our purposes $h$ may always be assumed to be \emph{symmetric},
i.e., $ 
h(\mz_1,\ldots,\mz_m)=h(\mz_{\sigma(1)},\ldots,\mz_{\sigma(m)})$
for all permutations $\sigma\in\mathcal{P}_m$ and $\mz_1,\ldots,\mz_m\in\R^2$. 
Letting $\mz_i=(z_{i,1},z_{i,2})^\top$, if both vectors
$(z_{1,1},\ldots,z_{m,1})$ and $(z_{1,2},\ldots,z_{m,2})$ are free of ties, i.e., have marginal distinct entries,
then we have well-defined vectors of ranks $(r_{1,1},\ldots,r_{m,1})$
and $(r_{1,2},\ldots,r_{m,2})$, and we define $\mr_i=(r_{i,1},r_{i,2})^\top$
for $1\le i\le n$.  Now a kernel is \emph{rank-based} if
\begin{equation*}
h(\mz_1,\ldots,\mz_m)=h(\mr_1,\ldots,\mr_m)
\end{equation*}
for all $\mz_1,\ldots,\mz_m\in\R^2$ with $(z_{1,1},\ldots,z_{m,1})$ and
$(z_{1,2},\ldots,z_{m,2})$ free of ties.  In this case, we also say that the ``correlation'' statistic
$\hat U_{jk}$ as well as the corresponding ``correlation measure'' $\E\hat U_{jk}$ is rank-based.  

Rank-based statistics have many
appealing properties with regard to independence.  The following
three will be of particular importance for us.  Proofs can be
found in, e.g., Chapter 31 in \citet{kendall1979advanced},
Lemma C4 in the supplement of \citet{MR3737306}, and Lemma 2.1 in
\citet{MR3766953}. We also note that, in finite samples, the statistics $\{\hat
U_{jk};j< k\}$ are generally not mutually independent.

\begin{proposition}\label{prop:easy}
Under the null hypothesis in \eqref{eq:H0} and assuming continuous
margins, we have:  
\begin{enumerate}[label=(\roman*)]
\item\label{prop:easy-1} The {rank} statistics $\{\hat U_{jk}, j\ne k\}$ are all
  identically distributed and are distribution-free, i.e., the
  distribution of $\hat U_{jk}$ does not depend on the marginal
  distributions of $X_1,\ldots,X_p$;
\item\label{prop:easy-2} Fix any $j\in[p]$, then the {rank} statistics $\{\hat U_{jk},
  k\ne j\}$, are mutually independent;
\item\label{prop:easy-3} For any $j\ne k\in [p]$, the {rank} statistic $\hat U_{jk}$ is independent of $\{\hat U_{j'k'}; j', k'\not\in\{j,k\}, j'\ne k' \}$.
\end{enumerate}
\end{proposition}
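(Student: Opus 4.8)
The plan is to reduce everything to elementary facts about the uniform distribution on the symmetric group $\mathcal{P}_n$. Under $H_0$ with continuous margins, within each coordinate $j$ there are almost surely no ties among $X_{1,j},\dots,X_{n,j}$, so the rank vector $R_{\cdot,j}:=(R_{1,j},\dots,R_{n,j})$, viewed as the permutation $i\mapsto R_{i,j}$ of $[n]$, is well defined. Two facts will carry the whole argument. First, $R_{\cdot,1},\dots,R_{\cdot,p}$ are mutually independent (each is a function of a distinct, independent coordinate sample) and each is uniform on $\mathcal{P}_n$ (by exchangeability of an i.i.d.\ continuous sample). Second, and this is the only step that requires a small computation: because $h$ is symmetric and rank-based, $\hat U_{jk}$ is a deterministic function of the single permutation $\omega_{jk}:=R_{\cdot,k}\circ R_{\cdot,j}^{-1}$, and it is the \emph{same} function $G:\mathcal{P}_n\to\R$ for every pair $j\ne k$, with $G$ depending only on $h$, $m$, and $n$. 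To see this, use symmetry to reindex the sum in \eqref{eq:Uh} by $m$-subsets $\{a_1<\dots<a_m\}\subseteq[n]$ of $j$-rank values rather than by $m$-subsets of row indices; for such a subset the within-subsample order of the corresponding $j$-entries is the identity on $[m]$, while the within-subsample order of the corresponding $k$-entries is exactly the relative order of $\omega_{jk}(a_1),\dots,\omega_{jk}(a_m)$. Since a rank-based kernel sees only these two relative orders, each summand — and hence $\hat U_{jk}$ — is a function of $\omega_{jk}$ alone, given by a formula involving only $h$, $m$, and $n$.

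Given this representation, part~(iii) is immediate: $\hat U_{jk}=G(\omega_{jk})$ is $\sigma(R_{\cdot,j},R_{\cdot,k})$-measurable, whereas $\{\hat U_{j'k'}:j',k'\notin\{j,k\}\}$ is $\sigma(R_{\cdot,\ell}:\ell\notin\{j,k\})$-measurable, and these two $\sigma$-algebras are independent because the rank vectors of distinct coordinates are mutually independent. For part~(i), condition on $R_{\cdot,j}$: since $R_{\cdot,k}$ is uniform on $\mathcal{P}_n$ and independent of $R_{\cdot,j}$, invariance of the uniform law under multiplication by a fixed permutation shows that $\omega_{jk}=R_{\cdot,k}\circ R_{\cdot,j}^{-1}$ is uniform on $\mathcal{P}_n$ regardless of the conditioning value, hence unconditionally uniform on $\mathcal{P}_n$ — a distribution depending neither on the pair $(j,k)$ nor on the marginal laws of $\mX$. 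Therefore $\hat U_{jk}=G(\omega_{jk})$ is distribution-free and the $\hat U_{jk}$ are identically distributed.

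For part~(ii), fix $j$; it suffices to show that $\{\omega_{jk}:k\ne j\}$ are mutually independent, since then so are $\hat U_{jk}=G(\omega_{jk})$. Condition on $R_{\cdot,j}=\rho$: the vectors $\{R_{\cdot,k}:k\ne j\}$ are i.i.d.\ uniform on $\mathcal{P}_n$ and independent of $R_{\cdot,j}$, so conditionally $\{\omega_{jk}:k\ne j\}=\{R_{\cdot,k}\circ\rho^{-1}:k\ne j\}$ are i.i.d.\ uniform on $\mathcal{P}_n$, a joint law not depending on $\rho$. Hence $\{\omega_{jk}:k\ne j\}$ are unconditionally i.i.d.\ uniform on $\mathcal{P}_n$, in particular mutually independent, which gives part~(ii).

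The only genuinely non-routine ingredient is the representation $\hat U_{jk}=G(\omega_{jk})$ with a pair-independent $G$; establishing it is just careful bookkeeping with the rank-based property and the reindexing of the U-statistic sum, but it is precisely what turns the proposition into a clean statement about i.i.d.\ uniform permutations and the group invariance of their law. (For part~(i) alone one could instead invoke the probability integral transform to reduce to i.i.d.\ $\mathrm{Uniform}[0,1]$ coordinates and use invariance of rank-based statistics under coordinatewise strictly increasing maps, but the permutation viewpoint is what makes parts~(ii) and~(iii) transparent.)
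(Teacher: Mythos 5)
Your proof is correct. The paper does not actually prove Proposition~\ref{prop:easy}; it defers to the references (Kendall's book, Lemma~C4 of the Han--Chen--Liu supplement, and Lemma~2.1 of Leung--Drton), and your argument --- reducing each $\hat U_{jk}$ to a fixed function $G$ of the relative permutation $\omega_{jk}=R_{\cdot,k}\circ R_{\cdot,j}^{-1}$ and then exploiting that the column rank vectors are i.i.d.\ uniform on $\mathcal{P}_n$ together with the invariance of the uniform law under composition with a fixed permutation --- is essentially the standard argument used in those sources, so no gap to report.
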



Our focus will be on those rank-based correlation statistics and the corresponding measures that are induced by the kernel $h(\cdot)$ and are both I-
and D-consistent.  The kernels of these measures satisfy important additional
properties that we will assume in our general treatment.  Further
concepts concerning U-statistics are needed to state this assumption.
For any kernel $h(\cdot)$, any number $\ell\in[m]$, and any measure
$\Pr_{\mZ}$, we write
\begin{equation*}
h_{\ell}(\mz_1\ldots,\mz_{\ell}; \Pr_{\mZ}):=\E h(\mz_1\ldots,\mz_{\ell},\mZ_{\ell+1},\ldots,\mZ_m)
\end{equation*}
and 
\begin{align}\label{eq:han-hl}
    &h^{(\ell)}(\mz_1,\ldots,\mz_{\ell}; \Pr_{\mZ}):= \\
&~~~h_{\ell}(\mz_1,\ldots,\mz_{\ell};\Pr_{\mZ})-\E h-\sum_{k=1}^{\ell-1}\sum_{1\leq i_1<\cdots<i_k\leq\ell}h^{(k)}(\mz_{i_1},\ldots,\mz_{i_k};\Pr_{\mZ}), \notag
\end{align}
where $\mZ_1,\ldots,\mZ_m$ are $m$ independent random vectors with
distribution $\Pr_{\mZ}$ {and $\E h:=\E h(\mZ_1,\ldots,\mZ_m)$}.  The kernel as well as the corresponding U-statistic is \emph{degenerate} under $\Pr_{\mZ}$ if $h_1(\cdot)$
has variance zero.  We use the term
\emph{completely degenerate} to indicate that  the
variances of $h_1(\cdot),\ldots,h_{m-1}(\cdot)$ are all zero.
Finally, let
$\Pr_0$ be the uniform distribution on $[0,1]$, and write
$\Pr_0\otimes \Pr_0$ for its product measure, the uniform distribution
on $[0,1]^2$.  Note that by Proposition~\ref{prop:easy}\ref{prop:easy-1}, the study
of $\hat U_{jk}$ under independent
continuous margins $X_j$ and $X_k$ can be reduced to the case with $(X_j,X_k)^\top\sim \Pr_0\otimes \Pr_0$.


\begin{assumption}\label{assumption:key} The kernel $h$ is rank-based, symmetric, and has the following three properties: 
\begin{enumerate}[label=(\roman*)]
\item $h$ is bounded.
\item\label{assumption:key-2} $h$ is mean-zero and degenerate under independent continuous margins, i.e., {$\E \{h_1(\mZ_1; \Pr_0\otimes \Pr_0)\}^2=0$ as $\mZ_1\sim \Pr_0\otimes \Pr_0$.}
\item $h_2(\mz_1,\mz_2;\Pr_0\otimes \Pr_0)$ has uniformly bounded
  eigenfunctions, that is, it admits the expansion
\begin{equation*}
h_2(\mz_1,\mz_2;\Pr_0\otimes \Pr_0)=\sum_{v=1}^{\infty}\lambda_v\phi_v(\mz_1)\phi_v(\mz_2),
\end{equation*}
where $\{\lambda_v\}$ and $\{\phi_v\}$ are the eigenvalues and eigenfunctions satisfying the integral equation
\begin{equation*}
\E h_2(\mz_1,\mZ_2)\phi(\mZ_2)=\lambda\phi(\mz_1)~~~\text{for all }\mz_1\in\R^2,
\end{equation*}
with $\mZ_2\sim \Pr_0\otimes\Pr_0$, $\lambda_1\ge\lambda_2\ge\cdots\ge0$, $\Lambda:=\sum_{v=1}^{\infty}\lambda_v\in(0,\infty)$, and $\sup_v\lVert \phi_v\rVert_\infty<\infty$.
\end{enumerate}
\end{assumption}

The first boundedness property is satisfied for the commonly used rank
correlations, including Kendall's $\tau$, Spearman's $\rho$,
and many others.   The latter two properties are much more specific, but
exhibited by the classical rank correlation measures for which consistency
properties are known.  
We discuss the main examples below.
Note also that the assumption $\Lambda>0$ implies $\lambda_1>0$, so
that $h_2(\cdot)$ is not a constant function.


\begin{example}[Hoeffding's $D$]\label{eg:hd}  From the symmetric kernel 
\begin{align*}
     & h_D(\mz_1,\dots,\mz_5):= \frac{1}{16}\sum_{(i_1,\dots,i_5)\in\mathcal{P}_5}\\
&~~~~ \Big[\Big\{\ind(z_{i_1,1}\leq z_{i_5,1})-\ind(z_{i_2,1}\leq z_{i_5,1})\Big\}\Big\{\ind(z_{i_3,1}\leq z_{i_5,1})-\ind(z_{i_4,1}\leq z_{i_5,1})\Big\}\Big]\\
&~~~~  \Big[\Big\{\ind(z_{i_1,2}\leq z_{i_5,2})-\ind(z_{i_2,2}\leq z_{i_5,2})\Big\}\Big\{\ind(z_{i_3,2}\leq z_{i_5,2})-\ind(z_{i_4,2}\leq z_{i_5,2})\Big\}\Big],
\end{align*}
we recover Hoeffding's $D$ statistic, which is a rank-based
U-statistic of order 5 and gives rise to the Hoeffding's $D$ correlation measure $\E h_D$.  The kernel $h_D(\cdot)$ satisfies the first two properties in
Assumption \ref{assumption:key} in view of the results in
\citet{MR0029139}. To verify the last property, we note that under
the measure $\Pr_0\otimes\Pr_0$,
$h_{D,2}(\cdot)$ is known to have
eigenvalues
\begin{equation*}
\lambda_{i,j; D}={3}/{(\pi^4i^2j^2)},\quad i,j\in\Z^+;
\end{equation*}
see, e.g., Proposition~7 in \citet{MR3842884} or 
Theorem~4.4 in \citet{MR3541972}.  The corresponding eigenfunctions are
\begin{equation*}
\phi_{i,j; D}\{(z_{1,1},z_{1,2})^\top\}=2\cos(\pi i
z_{1,1})\cos(\pi j z_{1,2}), \quad i,j\in\Z^+.
\end{equation*}
The eigenvalues are positive and sum to
$\Lambda_{D}:=\sum_{i,j}\lambda_{i,j; D}={1}/{12}$, and
$\sup_{i,j}\lVert\phi_{i,j;D}\rVert_{\infty}\le 2$. 
For any pair of random variables, the correlation measure $\E h_D\ge 0$ \citep[p.~547]{MR0029139}.
Furthermore, it has been proven that, once the pair is absolutely continuous 
in $\R^2$, the correlation measure $\E h_D=0$ if and only if the pair 
is independent \citep{MR0029139,zbMATH03366369}. 
This property, however, generally does not hold for 
discrete data or data generated from a bivariate distribution that is
continuous but not absolutely continuous; see Remark 1 in
\citet{zbMATH03366369} for a counterexample.
\end{example}

\begin{example}[Blum--Kiefer--Rosenblatt's $R$]\label{eg:bkr} The
  symmetric kernel 
\begin{align*}
 &    h_R(\mz_1,\dots,\mz_6):=  \frac{1}{32}\sum_{(i_1,\dots,i_6)\in\mathcal{P}_6}\\
      &~~~ \Big[\Big\{\ind(z_{i_1,1}\leq z_{i_5,1})-\ind(z_{i_2,1}\leq z_{i_5,1})\Big\}\Big\{\ind(z_{i_3,1}\leq z_{i_5,1})-\ind(z_{i_4,1}\leq z_{i_5,1})\Big\}\Big]\\
&~~~   \Big[\Big\{\ind(z_{i_1,2}\leq z_{i_6,2})-\ind(z_{i_2,2}\leq z_{i_6,2})\Big\}\Big\{\ind(z_{i_3,2}\leq z_{i_6,2})-\ind(z_{i_4,2}\leq z_{i_6,2})\Big\}\Big]
\end{align*}
yields Blum--Kiefer--Rosenblatt's $R$ statistic \citep{MR0125690}, which is a
rank-based U-statistic of order 6. One can verify the three properties
in Assumption \ref{assumption:key} similarly to Hoeffding's $D$ by
using that $h_{R,2}=2h_{D,2}$. In addition, for any pair of random
variables, the correlation measure $\E h_R\ge0$ with equality if and only if the pair is independent, {and no continuity assumption is needed at all};
cf.\  page 490 of \citet{MR0125690}.
\end{example}

\begin{example}[Bergsma--Dassios--Yanagimoto's $\tau^*$]\label{eg:bd}  \citet{MR3178526} introduced a rank correlation statistic as a U-statistic of order 4 with the symmetric kernel
\begin{align*}
h_{\tau^*}(\mz_1,\dots,\mz_4)\\
:= \frac{1}{16}\sum_{(i_1,\dots,i_4)\in\mathcal{P}_4} 
\Big\{\;&\ind(z_{i_1,1},z_{i_3,1}<z_{i_2,1},z_{i_4,1})+\ind(z_{i_2,1},z_{i_4,1}<z_{i_1,1},z_{i_3,1})\\[-.5em]
     -\;&\ind(z_{i_1,1},z_{i_4,1}<z_{i_2,1},z_{i_3,1})-\ind(z_{i_2,1},z_{i_3,1}<z_{i_1,1},z_{i_4,1})\Big\}\\
\Big\{\;&\ind(z_{i_1,2},z_{i_3,2}< z_{i_2,2},z_{i_4,2})+ \ind(z_{i_2,2},z_{i_4,2}< z_{i_1,2},z_{i_3,2})\\
     -\;&\ind(z_{i_1,2},z_{i_4,2}< z_{i_2,2},z_{i_3,2})-\ind(z_{i_2,2},z_{i_3,2}< z_{i_1,2},z_{i_4,2})\Big\}.
\end{align*}
Here,
$\ind(y_1,y_2<y_3,y_4):=\ind(y_1<y_3)\ind(y_1<y_4)\ind(y_2<y_3)\ind(y_2<y_4)$.
It holds that $h_{\tau^*,2}=3h_{D,2}$ and all properties in Assumption
\ref{assumption:key} also hold for
$h_{\tau^*}(\cdot)$.  
Theorem 1 in \citet{MR3178526} shows that for a pair of random
variables whose distribution is discrete, absolutely continuous, or a
mixture of both, the correlation measure $\E h_{\tau^*}\ge0$ where equality holds
if and only if the variables are independent.  It has been conjectured that this
fact is true for any distribution on $\R^2$. {In Section
  \ref{sec:tau} of this paper we make new progress along this track. This progress is based on early but apparently little known results of
  \citet{zbMATH03366369} that prompted us to add his name in reference
  to $\tau^*$.}
\end{example}

\section{Maximum-type tests of mutual independence}\label{sec:tests}

We now turn to tests of the mutual independence hypothesis $H_0$
in~(\ref{eq:H0}).  As in \citet{MR3737306}, we propose maximum-type
tests.  However, in contrast to \citet{MR3737306}, we suggest the use of
consistent and rank-based correlations with the practical
choices being {the ones} from Examples~\ref{eg:hd}--\ref{eg:bd}.  As
these measures are all nonnegative, it is
appropriate to consider a one-sided test in which we aggregate
pairwise U-statistics $\hat U_{jk}$ in \eqref{eq:Uh} into the test statistic
\[
\hat M_n:=(n-1)\max_{j<k}\hat U_{jk}.
\]
We then reject $H_0$ if $\hat M_n$ is larger than a certain threshold.  
Note that we tacitly assumed $\hat U_{jk}=\hat U_{kj}$ when maximizing
over $j<k$;  this symmetry holds for any 
reasonable correlation statistic.
{We emphasize once more that, since the statistic is constructed
  based on pairs $\{X_{i,j},X_{i,k}\}_{i\in[n]}$, the proposed tests
  are designed to assess pairwise independence consistently.}

By Proposition~\ref{prop:easy}\ref{prop:easy-1}, the statistic $\hat M_n$ is
distribution-free {in the class of multivariate distributions with continuous margins}.  An exact critical value for rejection of $H_0$ could thus
be approximated by Monte Carlo simulation.  However, as we will show,
extreme-value theory yields asymptotic critical values that avoid any
extra computation all the while giving good finite-sample control of
the test's size.  When presenting this theory, we write
$X\stackrel{\sf d}{=}Y$ if two random variables $X$ and $Y$ have the
same distribution, and we use  $\stackrel{\sf d}{\longrightarrow}$ to denote ``weak convergence''. 

If, under $H_0$, the studied statistic $(n-1)\hat U_{jk}$ weakly
converged to a chi-square distribution with one degree of freedom, as
in Theorems 1 and 2 of \citet{MR3737306}, then extreme-value theory
combined with Proposition \ref{prop:easy} would imply that a suitably
standardized version of $\hat M_n$ would weakly converge to a type-I Gumbel distribution with distribution function $\exp\{-(8\pi)^{-1/2}\exp(-y/2)\}$. 
However,  the degeneracy stated in Assumption \ref{assumption:key}\ref{assumption:key-2}
rules out this possibility.  Classical theory yields that instead of a single chi-square
variable, we encounter convergence to much more involved infinite weighted series \citep[Chap.~5.5.2]{MR595165}. 

\begin{proposition}\label{lem:easy}
  Let $\mX$ have continuous margins, and let  $j\ne k$.  If
  $h(\cdot)$ satisfies Assumption \ref{assumption:key}, then under $H_0$,
\[
\mbinom{m}{2}^{-1}(n-1)\hat U_{jk} \stackrel{\sf d}{\longrightarrow} \sum_{v=1}^{\infty}\lambda_v(\xi_v^2-1),
\]
where $\{\xi_v,v=1,2,\ldots\}$ are i.i.d.~standard Gaussian random variables. 
\end{proposition}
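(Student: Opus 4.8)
The plan is to obtain Proposition~\ref{lem:easy} as an instance of the classical limit theory for degenerate $U$-statistics, checking that Assumption~\ref{assumption:key} supplies exactly the inputs that theory needs. First I would invoke Proposition~\ref{prop:easy}\ref{prop:easy-1} to reduce to the canonical case $(X_j,X_k)^\top\sim\Pr_0\otimes\Pr_0$, so that the eigenstructure of $h_2(\cdot;\Pr_0\otimes\Pr_0)$ in Assumption~\ref{assumption:key}(iii) is the relevant one. I would then write out the Hoeffding decomposition of $\hat U_{jk}$ in terms of the completely degenerate canonical kernels $h^{(\ell)}(\cdot;\Pr_0\otimes\Pr_0)$ defined in \eqref{eq:han-hl}. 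Since $h$ is mean-zero and degenerate (Assumption~\ref{assumption:key}\ref{assumption:key-2}), one has $h^{(1)}\equiv0$ and $h^{(2)}=h_2$, so, suppressing the dependence on $\Pr_0\otimes\Pr_0$,
\[
\hat U_{jk}=\binom{m}{2}U_n^{(2)}+\sum_{\ell=3}^{m}\binom{m}{\ell}U_n^{(\ell)},\qquad
U_n^{(\ell)}:=\binom{n}{\ell}^{-1}\!\!\!\sum_{i_1<\cdots<i_\ell}\!\!\!h^{(\ell)}(\mZ_{i_1},\dots,\mZ_{i_\ell}).
\]
Boundedness of $h$ (Assumption~\ref{assumption:key}(i)) makes every $h^{(\ell)}$ square-integrable, so the standard variance bound for completely degenerate $U$-statistics gives $\mathrm{Var}(U_n^{(\ell)})\lesssim n^{-\ell}$, whence $U_n^{(\ell)}=O_{\mathbb P}(n^{-\ell/2})$ and $(n-1)U_n^{(\ell)}=o_{\mathbb P}(1)$ for $\ell\ge3$. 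Thus $\binom{m}{2}^{-1}(n-1)\hat U_{jk}=(n-1)U_n^{(2)}+o_{\mathbb P}(1)$, and everything reduces to the completely degenerate order-two $U$-statistic with kernel $h_2$.

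For that term I would insert the spectral expansion $h_2(\mz_1,\mz_2)=\sum_{v\ge1}\lambda_v\phi_v(\mz_1)\phi_v(\mz_2)$ and truncate it at level $K$. A preliminary point is that the constant function is an eigenfunction of the associated (Hilbert--Schmidt, self-adjoint) integral operator with eigenvalue $0$, because $\E h_2(\mz_1,\mZ_2)=h_1(\mz_1)=0$ by degeneracy; hence we may normalize the $\phi_v$ so that $\E\phi_v(\mZ)=0$ and $\E[\phi_v(\mZ)\phi_{v'}(\mZ)]=\ind(v=v')$. Writing $S_{n,v}=n^{-1/2}\sum_{i=1}^{n}\phi_v(\mZ_i)$ and expanding the double sum, the truncated piece is
\[
n\,U_n^{(2),K}=\sum_{v=1}^{K}\lambda_v\Big(\tfrac{n}{n-1}S_{n,v}^2-\tfrac{1}{n-1}\textstyle\sum_{i=1}^{n}\phi_v(\mZ_i)^2\Big),
\]
so the multivariate central limit theorem for $(S_{n,1},\dots,S_{n,K})$ (limiting covariance $\fI_K$) together with the law of large numbers $n^{-1}\sum_i\phi_v(\mZ_i)^2\to1$ gives $n\,U_n^{(2),K}\stackrel{\sf d}{\longrightarrow}\sum_{v=1}^{K}\lambda_v(\xi_v^2-1)$ with $\xi_1,\xi_2,\dots$ i.i.d.\ standard Gaussian. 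Since $\sum_v\lambda_v=\Lambda<\infty$ and $\mathrm{Var}(\xi_v^2-1)=2$, the finite sums converge in $L^2$ to $\sum_{v=1}^{\infty}\lambda_v(\xi_v^2-1)$ as $K\to\infty$, which is the asserted limit.

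The main obstacle---and the only place Assumption~\ref{assumption:key} is used quantitatively---is a bound on the truncation error $n\big(U_n^{(2)}-U_n^{(2),K}\big)$ that is \emph{uniform in $n$}. Its kernel $R_K=\sum_{v>K}\lambda_v\phi_v\otimes\phi_v$ is again completely degenerate with $\E R_K(\mZ_1,\mZ_2)^2=\sum_{v>K}\lambda_v^2$, so the variance identity for such $U$-statistics gives $\mathrm{Var}\big(n(U_n^{(2)}-U_n^{(2),K})\big)\le 4\sum_{v>K}\lambda_v^2\le 4\lambda_1\sum_{v>K}\lambda_v$ for all $n\ge2$, and this $\to0$ as $K\to\infty$ because $\Lambda<\infty$ (boundedness of $h$, equivalently of the $\phi_v$, is what makes all these quantities finite in the first place). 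Feeding the two displays above and this uniform bound into the standard device of approximating a weakly convergent sequence (uniformly well in $L^2$) by convergent ones finishes the proof. Alternatively, the previous two paragraphs may be compressed into a direct appeal to the classical degenerate $U$-statistic limit theorem (e.g.\ \citet[Sec.~5.5.2]{MR595165}), whose hypotheses---vanishing first-order Hoeffding projection, non-degenerate second-order projection, and $\E h^2<\infty$---are supplied by Assumption~\ref{assumption:key}(i)--(ii) together with $\Lambda>0$; I would nevertheless spell out the self-contained version so that the role of each part of Assumption~\ref{assumption:key} is visible and directly reusable in the subsequent extreme-value analysis of $\hat M_n$.
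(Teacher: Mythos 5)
Your proposal is correct, and it is in substance the same justification the paper relies on: the paper offers no separate proof of Proposition~\ref{lem:easy}, simply invoking the classical limit theory for degenerate U-statistics in \citet[Chap.~5.5.2]{MR595165} (both in the main text and again at the start of the proof of Theorem~\ref{thm:md}), which is exactly the "compressed" alternative you mention. Your spelled-out version is the standard proof of that classical theorem and each step checks out --- the reduction to uniform margins via Proposition~\ref{prop:easy}\ref{prop:easy-1}, the mean-zero orthonormal normalization of the $\phi_v$ (valid because $\E_{\mZ_2}h_2(\mz_1,\mZ_2)=h_1(\mz_1)=0$), the $O_{\mathbb P}(n^{-\ell/2})$ bound killing the Hoeffding terms of order $\ell\ge3$, and the uniform-in-$n$ $L^2$ truncation bound $4\sum_{v>K}\lambda_v^2\le4\lambda_1\sum_{v>K}\lambda_v\to0$ --- so it is a sound, self-contained substitute for the citation.
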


Note that the weak convergence result for degenerate U-statistics 
in Proposition~\ref{lem:easy} 
holds under much weaker conditions than Assumption \ref{assumption:key}; see the main theorem in \citet[Chap.~5.5.2]{MR595165} for detailed conditions.
Our intuition for the asymptotic forms of the maxima now comes from the following fact, {though the analysis of $\max_{j<k} \hat U_{jk}$ requires more refined techniques since $\{\hat U_{jk};j\leq k\}$ are in general not mutually independent}.

\begin{proposition}\label{lem:easy2} Let $Y_1,\ldots,Y_{d}$ be
  $d=p(p-1)/2$ {independent} copies of $\zeta\stackrel{\sf
    d}{=}\sum_{v=1}^{\infty}\lambda_v(\xi_v^2-1)$.   Then, as $p\to\infty$,
\[
\max_{j\in[d]} \frac{Y_{j}}{\lambda_1} -4\log p-(\mu_1-2)\log\log p+\frac{\Lambda}{\lambda_1} \stackrel{\sf d}{\longrightarrow} G.
\]
Here $G$ follows a Gumbel distribution with distribution function 
\[
\exp\Big\{-\frac{2^{\mu_1/2-2}\kappa}{\Gamma(\mu_1/2)}\exp\Big(-\frac{y}{2}\Big)\Big\},
\]
where $\mu_1$ is the multiplicity of the largest eigenvalue $\lambda_1$ in the sequence $\{\lambda_1,\lambda_2,\dots\}$, $\kappa:=\prod_{v= \mu_1+1}^{\infty}(1-\lambda_v/\lambda_1)^{-1/2},$ and $\Gamma(z):=\int_0^{\infty}x^{z-1}e^{-x}dx$ is the gamma function.
\end{proposition}

Obviously, when setting $\lambda_1=1,
\lambda_2=\lambda_3=\cdots=0$ in Proposition \ref{lem:easy2}, we recover the Gumbel distribution
derived by \citet{MR3737306}.  Based on Propositions \ref{lem:easy} and \ref{lem:easy2}, for any pre-specified significance level $\alpha\in(0,1)$, our proposed test is 
\begin{equation}\label{eq:general}
\mathsf{T}_{\alpha}:= \ind\Big\{\frac{n-1}{\lambda_1\binom{m}{2}}\max_{j<k}\hat U_{jk}-4\log p-(\mu_1-2)\log\log p+\frac{\Lambda}{\lambda_1}>Q_\alpha\Big\},
\end{equation}
where 
\begin{equation*}
Q_\alpha:=\log\frac{2^{\mu_1-4}\kappa^2}{\{\Gamma(\mu_1/2)\}^2}-2\log\log(1-\alpha)^{-1}
\end{equation*}
is the $1-\alpha$ quantile of the Gumbel distribution of distribution function $\exp\{-{2^{\mu_1/2-2}\kappa}/\Gamma(\mu_1/2)\cdot\exp(-y/2)\}$. {However, note that so far the test results merely
  from heuristic arguments. Theoretical justifications
  regarding the test's size and power under the high-dimensional regime will
  be given in Section \ref{sec:theory}.}

\begin{example}[``Extreme $D$'']\label{eg:hd2}
Hoeffding's $D$ statistic introduced in Example \ref{eg:hd} is
\[
\hat D_{jk}:=\mbinom{n}{5}^{-1}\sum_{\substack{i_1<\cdots<i_5}}h_D(\mX_{i_1,\{j,k\}},\dots,\mX_{i_5,\{j,k\}}).
\]
 According to \eqref{eq:general}, the corresponding test is
\[
\mathsf{T}_{D,\alpha}:= \ind\Big\{\frac{\pi^4(n-1)}{30}\max_{j<k}\hat D_{jk}-4\log p+\log\log p+\frac{\pi^4}{36}>Q_{D,\alpha}\Big\},
\]
where $Q_{D,\alpha}:=\log\{\kappa_D^2/(8\pi)\}-2\log\log(1-\alpha)^{-1}$ and 
\[
\kappa_D:=\Big\{2\prod_{n=2}^{\infty}\frac{\pi/n}{\sin(\pi/n)}\Big\}^{1/2}\approx 2.467.
\] 
\end{example}

\begin{example}[``Extreme $R$'']\label{eg:bkr2}
Blum--Kiefer--Rosenblatt's $R$ statistic from Example \ref{eg:bkr} is
\[
\hat R_{jk}:=\mbinom{n}{6}^{-1}\sum_{\substack{i_1<\cdots<i_6}}h_R(\mX_{i_1,\{j,k\}},\dots,\mX_{i_6,\{j,k\}}).
\]
According to \eqref{eq:general}, the corresponding test is
\[
\mathsf{T}_{R,\alpha}:= \ind\Big\{\frac{\pi^4(n-1)}{90}\max_{j<k}\hat R_{jk}-4\log p+\log\log p+\frac{\pi^4}{36}>Q_{R,\alpha}\Big\},
\]
where $Q_{R,\alpha}:=Q_{D,\alpha}$.
\end{example}

\begin{example}[``Extreme $\tau^*$'']\label{eg:bd2}
  Bergsma--Dassios--Yanagimoto's $\tau^*$ statistic from Example \ref{eg:bd} is
\[
\hat\tau^*_{jk}:=\mbinom{n}{4}^{-1}\sum_{\substack{i_1<\cdots<i_4}}h_{\tau^*}(\mX_{i_1,\{j,k\}},\dots,\mX_{i_4,\{j,k\}}).
\]
According to \eqref{eq:general}, it yields the test
\[
\mathsf{T}_{\tau^*,\alpha}:= \ind\Big\{\frac{\pi^4(n-1)}{54}\max_{j<k}\hat\tau^*_{jk}-4\log p+\log\log p+\frac{\pi^4}{36}>Q_{\tau^*,\alpha}\Big\},
\]
where $Q_{\tau^*,\alpha}:=Q_{D,\alpha}$.
\end{example}

Note that, by the definitions of the kernels and the identity \eqref{eq:identity} that will be introduced in Section \ref{sec:tau}, as long as there is no tie in the data, for any $j,k\in[p]$, 
\begin{align}\label{eq:identity2}
\hat D_{jj}=\hat R_{jj}=\hat\tau^*_{jj}=1~~~\text{and}~~~3\hat D_{jk}+2\hat R_{jk}=5\hat\tau^*_{jk}.
\end{align}

\begin{remark}
{In applying the above tests we have intrinsically assumed that
  there are no ties among the entries $X_{1,j},\ldots,X_{n,j}$ for
  each $j\in[p]$. This is based on the assumption that $\mX=(X_1,\ldots,X_p)^{\top}$ has continuous margins. In practice, however, data in finite accuracy might feature ties or may indeed be drawn from a distribution that is not of a continuous margin. In such cases, conducting the above tests on the original data may distort the size. To fix this, as was discussed in Remark 2.1 in \citet{MR3491123}, one may break the ties randomly so that the above tests remain distribution-free. Also see Chapter 8 in \citet{MR3221959} for more discussions on how to break ties for rank-based tests.}
\end{remark}

\section{Theoretical analysis}\label{sec:theory}

This section provides theoretical justifications of the tests proposed
in Section \ref{sec:tests}. The section is split into two parts.  The
first part rigorously justifies the proposed asymptotic critical
values.  The second part gives a  power analysis and shows optimality properties. 

\subsection{Size control}

In this section, we derive the limiting distribution of the statistic
$\hat M_n$ under $H_0$.  The below Cram\'{e}r-type moderate deviation
theorem for degenerate U-statistics under a general probability measure is the
foundation of our theory.  There has been a large literature on
deriving the moderate deviation theorem for non-degenerate
U-statistics (see, for example, \citet{MR3498022} for some recent
developments) as well as Berry--Esseen-type bounds for degenerate
U-statistics (see \citet{MR1481126} and \citet{MR3161488} among many).
However, to our knowledge, the literature does not provide  a comparable moderate deviation theorem for degenerate U-statistics. 

\begin{theorem}[Cram\'{e}r-type moderate deviation for degenerate U-statistics]\label{thm:md}
Let $Z_1,\ldots,Z_n$ be (not necessarily continuous) i.i.d.~random
variables with distribution $\Pr_Z$. Consider the U-statistic
\[
\hat U_n=\mbinom{n}{m}^{-1}\sum_{1\le i_1<\cdots<i_m\le n}h(Z_{i_1},\dots,Z_{i_m}),
\]
where the kernel $h(\cdot)$ is symmetric and such that (i) $\lVert
h\rVert_\infty <\infty$, (ii) $h_1(Z_1;\Pr_Z)=0$ almost surely,
and (iii) $h_2(z_1,z_2;\Pr_Z)$ admits the  eigenfunction expansion,
\begin{equation*}
h_2(z_1,z_2; \Pr_Z)=\sum_{v=1}^{\infty}\lambda_v\phi_v(z_1)\phi_v(z_2),
\end{equation*}
with $\lambda_1\ge\lambda_2\ge\cdots\ge0$,
$\Lambda:=\sum_{v=1}^{\infty}\lambda_v\in(0,\infty)$, and
$\sup_v\lVert \phi_v\rVert_\infty <\infty$. 
{We then have, for any sequence of positive scalars $e_n\to 0$,
\begin{equation*}
\lim_{n\to\infty}\sup_{x_n\in[-\Lambda,e_nn^{\theta}]}\left|\frac{\Pr\Big\{\binom{m}{2}^{-1}(n-1)\hat U_n>x_n\Big\}}{\Pr\Big\{\sum_{v=1}^\infty\lambda_v(\xi_v^2-1)>x_n\Big\}}-1\right|=0,
\end{equation*}
}where $\{\xi_v, v=1,2,\ldots\}$ are i.i.d.~standard Gaussian, and $\theta$ is any absolute constant such that
\begin{equation}\label{eq:thetah}
\theta< 
{\sup\Big\{q\in [0,1/3): \;\sum\nolimits_{v>\lfloor
    n^{(1-3q)/5}\rfloor}\lambda_v=O(n^{-q})\Big\}}
\end{equation}
if infinitely many of eigenvalues $\lambda_v$ are nonzero, 
and $\theta=1/3$ otherwise.

\end{theorem}

In Theorem \ref{thm:md}, when there are only finitely many nonzero eigenvalues, \nb{the range $o(n^{1/3})$ is the standard one for Cram\'er-type moderate deviation.} When there are infinitely many nonzero eigenvalues, it is still unclear if the range $o(n^\theta)$ is the best possible one. It is certainly an interesting question to investigate the optimal range for degenerate U-statistics in the future. With the aid of Theorem \ref{thm:md} and combining it with Proposition
\ref{lem:easy2}, we can now show that, under $H_0$, even if
$p$ is exponentially larger than the sample size $n$, our maximum-type
test statistic still weakly converges to the Gumbel distribution
specified in Proposition \ref{lem:easy2}.   Hence, the proposed test $\mathsf{T}_\alpha$ in \eqref{eq:general} can effectively control the size. 

\begin{theorem}[Limiting null distribution]\label{thm:distr}
  Assume $X_1,\ldots,X_p$ are continuous and the independence
  hypothesis $H_0$ holds.  Let $\hat U_{jk}$, $j<k$, have a common
  kernel $h$ that satisfies Assumption \ref{assumption:key}.  Define
  the parameter $\theta$ as in \eqref{eq:thetah}.  Then if $p=p_n$ goes
  to infinity with $n$ such that $\log p=o(n^\theta)$,  it holds for
  {any absolute constant $y\in\mathbb{R}$} that
  \begin{align*}
    &\Pr\Big\{\frac{n-1}{\lambda_1\binom{m}{2}}\max_{j<k}\hat U_{jk}-4\log p-(\mu_1-2)\log\log p+\frac{\Lambda}{\lambda_1}\le y\Big\}\\
    =\; &\exp\Big\{-\frac{2^{\mu_1/2-2}\kappa}{\Gamma(\mu_1/2)}\exp\Big(-\frac{y}{2}\Big)\Big\}+o(1).
  \end{align*}
  Consequently, 
\[
\Pr_{H_0}(\mathsf{T}_{\alpha}=1)=\alpha+o(1),
\]
{where $\Pr_{H_0}$ represents the probability under the null hypothesis $H_0$.}
\end{theorem}



{Note that the proof of Theorem \ref{thm:distr} uses the Chen-Stein
  method, via Theorem~1 of \citet{MR972770}, which is able to handle our case where the random variables are not mutually independent.}
We emphasize that our theory holds without any {distributional
assumption on $\mX$ except for marginal continuity}. This property of being distribution-free  {in the class of multivariate distributions with continuous margins} is
essentially shared by all rank-based correlation measures, but is
clearly not satisfied by other measures like linear or
distance covariance as was illustrated, for example,  by
\citet{MR2052906} and \citet{MR3798874}.   

As a simple consequence of Theorem \ref{thm:distr}, the following corollary shows that the tests in Examples \ref{eg:hd2}--\ref{eg:bd2} have asymptotically correct sizes, with $\theta$ being explicitly calculated.

\begin{corollary}\label{crl:size_eg}
Let $X_1,\ldots,X_p$ be continuous.  Let $p$ go to infinity with $n$
in such a way that $\log p=o(n^{1/8-\delta})$ for some arbitrarily
small pre-specified constant $\delta>0$.  Then
\begin{align*}
&\Pr_{H_0}(\mathsf{T}_{D,\alpha}=1)=\alpha+o(1),~~~
\Pr_{H_0}(\mathsf{T}_{R,\alpha}=1)=\alpha+o(1), & \\
\text{and}~~~
&\Pr_{H_0}(\mathsf{T}_{\tau^*,\alpha}=1)=\alpha+o(1).
\end{align*}
\end{corollary}

\subsection{Power analysis and rate-optimality}

We now investigate the power of the proposed tests from an asymptotic
minimax perspective.  The key ingredient is the choice of a
suitable distribution family as an alternative to the null hypothesis
in \eqref{eq:H0}. Recall the definition of $h^{(1)}(\cdot)$ in
\eqref{eq:han-hl}. For any kernel function $h(\cdot)$ and constants
$\gamma>0$ and $q\in\Z^+$, define a general $q$-dimensional (not necessarily continuous) distribution family as follows:
\begin{equation*}
\mathcal{D}(\gamma,q;h)\!:=\!\Big\{\mathcal{L}(\mX): \mX\in\mathbb{R}^q, \Var_{jk}\{h^{(1)}(\cdot;\Pr_{jk})\}\leq \gamma \E_{jk} h  ~~\text{for all }j\ne k\in[q] \Big\},
\end{equation*}
where $\mathcal{L}(\mX)$ is the distribution (law) of $\mX$, and $\Pr_{jk}$, $\E_{jk}(\cdot)$, and $\Var_{jk}(\cdot)$ stand for the probability measure, expectation, and variance operated on the bivariate distribution of $(X_j,X_k)^\top$, respectively. 


The family $\mathcal{D}(\gamma,q;h)$ intrinsically characterizes the
slope of the function $\Var_{jk}\{h^{(1)}(\cdot;\Pr_{jk})\}$ with
regard to the dependence between $X_j$ and $X_k$, characterized by the
``correlation measure'' $\E_{jk}h$.  {Intuitively, consider
  $\mathbb{E}_{jk}h$ as a rank correlation measure of dependence
  between $X_j$ and $X_k$.   When $X_j$ is independent of $X_k$, we have that
\[
{\rm Var}_{jk}\{h^{(1)}(\cdot;\mathbb{P}_{j}\otimes\mathbb{P}_k)\}=0 = \mathbb{E}_{jk} h
\]
as long as Assumption 2.1 holds for $h(\cdot)$. Therefore,
heuristically, as the dependence between $X_j$ and $X_k$ increases, it
is possible that the variance
{${\rm Var}_{jk}\{h^{(1)}(\cdot;\mathbb{P}_{jk})\}$}
will deviate from 0 at the same or a slower rate compared to 
$\mathbb{E}_{jk} h$.  Note that both parameters are nonnegative.
The next lemma firms up this intuition by establishing that the Gaussian family belongs to $\mathcal{D}(\gamma,q;h)$ for
all the kernels $h(\cdot)$ considered in Examples \ref{eg:hd} to \ref{eg:bd},
provided $\gamma$ is large enough.}
 
\begin{lemma}\label{lem:normal}
  There exists an absolute constant $\gamma>0$ such that for all
  $q\in\mathbb{Z}^+$, any $q$-dimensional Gaussian distribution is in
  $\mathcal{D}(\gamma,q;h_D)$, $\mathcal{D}(\gamma,q;h_R)$, and
  $\mathcal{D}(\gamma,q;h_{\tau^*})$.
\end{lemma}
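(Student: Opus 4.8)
The plan is to verify the defining inequality of the family $\mathcal{D}(\gamma,q;h)$ directly for the bivariate Gaussian distribution of each coordinate pair $(X_j,X_k)^\top$, and then argue that the constant $\gamma$ can be chosen uniformly over all correlations $\rho\in(-1,1)$ and over the three kernels. Since the kernels $h_D$, $h_R$, $h_{\tau^*}$ are all rank-based and monotone-invariant, everything reduces to the Gaussian copula: if $(X_j,X_k)^\top$ is bivariate Gaussian with Pearson correlation $\rho$, then $\E_{jk} h$ and $\Var_{jk}\{h^{(1)}(\cdot;\Pr_{jk})\}$ depend only on $\rho$. The case $\rho=0$ is covered by \eqref{eq:interesting}, so the task is to show that, as functions of $\rho$, the ratio
\[
g(\rho):=\frac{\Var_{jk}\{h^{(1)}(\cdot;\Pr_{jk})\}}{\E_{jk} h}
\]
stays bounded on $(-1,1)\setminus\{0\}$, with a common bound for the three kernels.

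First I would record the behavior near $\rho=0$. By Theorem~\ref{thm:increasing}, $\E_{jk} h$ is a nonnegative even function of $\rho$, increasing in $|\rho|$, and a direct second-order expansion (using the eigenexpansion of $h_2$ in Assumption~\ref{assumption:key} together with the Mehler/Hermite expansion of the bivariate Gaussian density) gives $\E_{jk} h = \binom{m}{2}\,c\,\rho^2 + o(\rho^2)$ with $c>0$ tied to $\lambda_1$; in fact for all three kernels the leading eigenfunctions are the cosines from Example~\ref{eg:hd}, so the leading constant is explicit. On the other hand $h^{(1)}$ is, by \eqref{eq:han-hl}, the first Hoeffding projection of the \emph{centered} kernel, and it vanishes identically at $\rho=0$; a parallel expansion shows $\Var_{jk}\{h^{(1)}(\cdot;\Pr_{jk})\} = O(\rho^2)$ as $\rho\to0$. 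Hence $g(\rho)$ is bounded in a neighborhood of $0$. Second, away from $0$, say on $\{|\rho|\ge\epsilon\}$, the numerator is bounded above by a constant because $h$ is bounded (Assumption~\ref{assumption:key}(i)) and hence so is every $h^{(1)}$ and its variance, while the denominator $\E_{jk} h$ is bounded \emph{below} by $\E h$ at correlation $\epsilon$, which is strictly positive by Proposition~\ref{prop:positive} combined with Theorem~\ref{thm:ya} (the Gaussian pair at correlation $\epsilon$ is dependent). So $g$ is bounded on $\{|\rho|\ge\epsilon\}$ as well. Combining the two regimes gives $\sup_{\rho}g(\rho)<\infty$ for each kernel; taking the max over the three kernels yields the absolute constant $\gamma$. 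Finally, to pass from the bivariate statement to the $q$-dimensional one, note that the condition defining $\mathcal{D}(\gamma,q;h)$ is a conjunction of bivariate conditions, one per pair $(j,k)$, each of which involves only the (Gaussian) law of $(X_j,X_k)^\top$; since $\gamma$ was chosen uniformly in $\rho$, every pair satisfies it regardless of $q$ or of the full covariance structure.

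The main obstacle is the uniform control of $g(\rho)$ as $|\rho|\to1$, i.e.\ making sure the numerator does not blow up faster than the denominator near perfect dependence. The boundedness of $h$ handles this cheaply—both numerator and denominator stay in a compact range as $|\rho|\to1$, and the denominator is bounded below there by its value at any fixed $\epsilon$—so this is really a non-issue once one invokes Assumption~\ref{assumption:key}(i). The only genuinely delicate point is the $\rho\to0$ asymptotics: one must confirm that $\Var_{jk}\{h^{(1)}\}$ is $O(\rho^2)$ (not merely $o(1)$), which requires expanding $h^{(1)}$—equivalently $h_1$, since $h_1\equiv0$ at $\rho=0$ makes $h^{(1)}=h_1$ to leading order—in powers of $\rho$ via the Hermite expansion of the Gaussian copula density and checking the first-order term is nonzero while higher-order terms are controlled. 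This is the one place where the explicit form of the kernels (or at least their degeneracy structure and boundedness) is used, and it is the step I would write out carefully; the rest is bookkeeping.
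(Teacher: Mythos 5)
Your proposal is correct, and the overall architecture (reduce to each bivariate Gaussian pair, show the ratio $\Var_{jk}\{h^{(1)}\}/\E_{jk}h$ is bounded near $\rho=0$ and trivially bounded for $|\rho|$ away from $0$, then take the maximum over the three kernels) matches what the lemma needs; but your route through the two local estimates differs from the paper's. The paper proves $\E_{jk}h_D\asymp\Sigma_{jk}^2$ by observing that $D_{jk}$ is a smooth function of $\Sigma_{jk}$ (differentiation under the integral against the bivariate normal density, using boundedness of $h_D$) and then directly computing that the value and first derivative vanish at $\Sigma_{jk}=0$ while the second derivative equals $5/\pi^2>0$; and it disposes of the variance bound $\Var_{jk}\{h_D^{(1)}\}=O(\Sigma_{jk}^2)$ without any expansion at all, via a short nonnegativity argument: the variance is smooth, vanishes at $\Sigma_{jk}=0$, and a nonzero first-order Taylor coefficient would force it to be negative on one side of $0$, a contradiction. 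You instead propose Mehler/Hermite expansions of the Gaussian copula for both the mean and the variance; this works (the first-order copula perturbation $\rho\,\varphi\varphi$ gives the same positive quadratic coefficient the paper computes), and it buys explicit constants plus an explicit treatment of the $|\rho|\ge\epsilon$ regime via Theorem~\ref{thm:increasing} and boundedness, which the paper leaves implicit. The one step you flag as delicate — verifying $\Var_{jk}\{h^{(1)}\}=O(\rho^2)$ rather than merely $o(1)$ — is exactly where the paper's sign trick lets you skip the Hermite bookkeeping entirely: smoothness in $\rho$ plus nonnegativity of a variance vanishing at $0$ already kills the linear term, so you may want to substitute that argument for your "parallel expansion." Also, your claimed leading form $\binom{m}{2}c\rho^2$ tied to $\lambda_1$ is not quite how the constant arises (the alternative-distribution mean is not directly governed by the null eigendecomposition), but only strict positivity of the quadratic coefficient is needed, and your copula-perturbation computation does deliver that.
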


Next we introduce a class of matrices indexed by a
positive constant $C$ as 
\begin{equation*}
\mathcal{U}_p(C):= \Big\{ \fM\in \R^{p\times p}:  \max_{j< k}\{M_{jk}\}\geq C(\log p/n)\Big\}.
\end{equation*}
Such matrices will define a ``sparse local alternative'' as considered also
in Section 4.1 in \citet{MR3737306}.   Note, however, that in our case
the scale is at the order of $\log p/n$ as opposed to  $(\log
p/n)^{1/2}$ in \citet{MR3737306}.  This is due to our statistics being
degenerate under independence. Hence, the variance
of $h^{(1)}(\cdot)$ is
zero under the null, while nonzero for these statistics investigated in \citet{MR3737306}. {It should also be noted that these two classes cannot
be directly compared;  intuitively the consistent measures are defined
on a squared scale when contrasted to the non-consistent measures. As will be shown later,
in the example of the Gaussian case, both classes correspond to a
condition on the Pearson correlation obeying the rate $(\log p /n)^{1/2}$. }

The following theorem now describes ``local alternatives'' under which
the power of our general test $\mathsf{T}_{\alpha}$ tends to one as {both $n$ and $p$ go
to infinity.}  

\begin{theorem}[Power analysis, general]\label{thm:power}
Given any $\gamma>0$ and a kernel $h(\cdot)$ satisfying Assumption \ref{assumption:key}, there exists some sufficiently large $C_{\gamma}$ depending on $\gamma$ such that
\begin{equation*}
\liminf_{n,p\to\infty}\inf_{\fU\in\mathcal{U}_p(C_{\gamma})}\P_{\fU}(\mathsf{T}_\alpha=1)=1,
\end{equation*}
where, for each specified $(n,p)$, the infimum is taken over all
distributions in $\mathcal{D}(\gamma,p;h)$ that have the matrix of
population dependence coefficients $\fU=[U_{jk}]$ in
$\mathcal{U}_p(C_{\gamma})$.  Here, $U_{jk}:=\E \hat U_{jk}$.
\end{theorem}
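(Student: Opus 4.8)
\textbf{Proof plan for Theorem \ref{thm:power}.}

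The plan is to reduce the problem to a single pair of coordinates and then to control two sources of error: the deviation of the maximum pairwise statistic from its population mean, and the size of the asymptotic critical value in \eqref{eq:general}. First I would pick any pair $(j^*,k^*)$ attaining $U_{j^*k^*}=\max_{j<k}U_{jk}\geq C_\gamma(\log p/n)$, which exists by the assumption $\fU\in\mathcal{U}_p(C_\gamma)$. Since $\hat M_n=(n-1)\max_{j<k}\hat U_{jk}\geq (n-1)\hat U_{j^*k^*}$, it suffices to show that $(n-1)\hat U_{j^*k^*}$, suitably rescaled, exceeds the threshold $4\log p+(\mu_1-2)\log\log p-\Lambda/\lambda_1+Q_\alpha$ with probability tending to one, uniformly over the alternative family. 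Writing $\hat U_{j^*k^*}=U_{j^*k^*}+(\hat U_{j^*k^*}-U_{j^*k^*})$, the mean term contributes $(n-1)U_{j^*k^*}\geq C_\gamma\log p$ to the test statistic, which for $C_\gamma$ large enough dominates the deterministic threshold (which grows like $4\log p$ plus lower-order terms). So the crux is a one-sided concentration inequality showing $\hat U_{j^*k^*}-U_{j^*k^*}$ is, with high probability, not too negative on the scale $\log p/n$.

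The key step is this concentration bound. Here I would use the Hoeffding decomposition of $\hat U_{j^*k^*}$ into its projection terms: the linear part built from $h^{(1)}(\cdot;\Pr_{j^*k^*})$, the quadratic part from $h^{(2)}$, and higher-order pieces. The linear part is a sum of i.i.d.\ bounded mean-zero terms with variance $\Var_{j^*k^*}\{h^{(1)}(\cdot;\Pr_{j^*k^*})\}\leq \gamma\,\E_{j^*k^*}h=\gamma\,U_{j^*k^*}$ by membership in $\mathcal{D}(\gamma,p;h)$; a Bernstein inequality then gives that this term deviates from $0$ by more than $\tfrac12 U_{j^*k^*}$ with probability at most $\exp\{-c\, n\, U_{j^*k^*}^2/(\gamma U_{j^*k^*}+U_{j^*k^*})\}=\exp\{-c\, n\,U_{j^*k^*}/(1+\gamma)\}\leq \exp\{-c\,C_\gamma\log p/(1+\gamma)\}=o(1/p^2)$ once $C_\gamma$ is large relative to $\gamma$. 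The degenerate quadratic and higher-order terms are of smaller order: being bounded completely/partially degenerate U-statistics, they are $O_p(1/n)$ (indeed the quadratic term is $O_p(1/n)$ and, after multiplication by $n-1$, contributes an $O_p(1)$ term that is negligible against $C_\gamma\log p$), so they do not interfere. Combining, on an event of probability $1-o(1)$ uniform over the family, $\hat U_{j^*k^*}\geq \tfrac12 U_{j^*k^*}-O_p(1/n)\geq \tfrac14 U_{j^*k^*}$ for large $n$, hence $(n-1)\hat U_{j^*k^*}/(\lambda_1\binom{m}{2})\geq c\,C_\gamma\log p$, which exceeds $4\log p+(\mu_1-2)\log\log p-\Lambda/\lambda_1+Q_\alpha$ for $C_\gamma$ chosen large enough depending only on $\gamma$, $\lambda_1$, $m$, $\Lambda$, and $\alpha$.

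The main obstacle will be making the concentration argument genuinely uniform over the infinite family $\mathcal{D}(\gamma,p;h)$ and over the choice of the worst-case pair, since the distribution $\Pr_{j^*k^*}$ — and hence $h^{(1)}(\cdot;\Pr_{j^*k^*})$ and all the higher-order projection kernels — varies with the alternative. The saving grace is that $h$ is bounded by Assumption \ref{assumption:key}(i), so all projection kernels $h^{(\ell)}$ are bounded by an absolute constant regardless of $\Pr_{j^*k^*}$, and the variance proxy in the Bernstein bound is controlled by $\gamma\,U_{j^*k^*}$ uniformly by the very definition of $\mathcal{D}(\gamma,p;h)$; thus the probability bounds depend on the alternative only through $U_{j^*k^*}\geq C_\gamma\log p/n$, which is exactly what is held fixed. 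A secondary technical point is bounding the remainder terms of the Hoeffding decomposition uniformly: one argues that each completely or partially degenerate piece has $L^2$ norm of order $n^{-\ell/2}\lesssim n^{-1}$ with an absolute constant (again thanks to boundedness of $h$), so a crude Markov/Chebyshev bound suffices and no delicate moderate-deviation machinery is needed on the alternative side. Putting these pieces together yields the stated $\liminf=1$.
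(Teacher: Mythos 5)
Your proposal is correct, and its skeleton is the same as the paper's: Hoeffding decomposition of $\hat U_{jk}-U_{jk}$, Bernstein's inequality for the linear projection term with the variance proxy $\Var_{jk}\{h^{(1)}(\cdot;\Pr_{jk})\}\le\gamma U_{jk}$ supplied by membership in $\mathcal{D}(\gamma,p;h)$, boundedness of all projection kernels to get uniformity over the family, and finally the observation that $(n-1)U_{j^*k^*}\gtrsim C_\gamma\log p$ dwarfs the Gumbel threshold $4\log p+(\mu_1-2)\log\log p-\Lambda/\lambda_1+Q_\alpha$ once $C_\gamma$ is large. Where you genuinely diverge is in how the stochastic error is controlled. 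The paper bounds $\max_{j<k}|\hat U_{jk}-U_{jk}|$ uniformly over all $\binom{p}{2}$ pairs via a union bound, which forces exponential (polynomial-in-$1/p$) tail bounds on every Hoeffding component and hence requires the Arcones--Gin\'e inequality for the bounded completely degenerate pieces $H_n^{(\ell)}$, $\ell\ge2$; it also first normalizes, without loss of generality, to the boundary case $\max_{j<k}U_{jk}=C_\gamma(\log p/n)$. You instead fix the single maximizing pair $(j^*,k^*)$ and use only the one-sided bound $\max_{j<k}\hat U_{jk}\ge\hat U_{j^*k^*}$, so a crude Chebyshev bound (with variance $O(n^{-\ell})$ from boundedness and complete degeneracy of $h^{(\ell)}$) suffices for the remainder, and by letting the deviation threshold scale with $U_{j^*k^*}$ itself you avoid the boundary-case reduction altogether. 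The trade-off: your route is more elementary and arguably cleaner on uniformity, but delivers a failure probability of order $1/(\log p)^2$ rather than the paper's $1/p$; both are amply sufficient for the stated $\liminf$ conclusion, and the paper's stronger union-bound control is not needed for this theorem. Two cosmetic points: your final choice of $C_\gamma$ need not depend on $\alpha$, since the surplus $C_\gamma\log p/(4\lambda_1\binom{m}{2})-4\log p$ absorbs the constant $Q_\alpha$ for large $p$ (as in the paper's choice forcing the statistic above $5\log p$); and the Bernstein exponent should be tracked with the factor $m$ from the linear term $\frac{m}{n}\sum_i h^{(1)}$, which only changes the constants.
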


The proof of Theorem \ref{thm:power} only uses the Hoeffding decomposition for U-statistics, Bernstein's inequality
for the sample mean part, and Arcones and Gin\'e's inequality for the degenerate
U-statistics parts \citep{MR1235426}.  Consequently, we do not have to assume any continuity of $\mX$. The theorem immediately yields the following corollary, characterizing the local alternatives under which the three rank-based tests from Examples \ref{eg:hd2}--\ref{eg:bd2} have power tending to 1.

\begin{corollary}[Power analysis, examples]\label{crl:power_eg}
Given any $\gamma>0$, we have, for some sufficiently large $C_{\gamma}$ depending on $\gamma$,
\begin{align*}
&\liminf_{n,p\to\infty}\inf_{\fD\in\mathcal{U}_p(C_\gamma)}\P_{\fD}(\mathsf{T}_{D,\alpha}=1)=1,~~~
 \liminf_{n,p\to\infty}\inf_{\fR\in\mathcal{U}_p(C_\gamma)}\P_{\fR}(\mathsf{T}_{R,\alpha}=1)=1,\\
&\liminf_{n,p\to\infty}\inf_{\fT^*\in\mathcal{U}_p(C_\gamma)}\P_{\fT^*}(\mathsf{T}_{\tau^*,\alpha}=1)=1,
\end{align*}
where, for each specified $(n,p)$, the infima are taken over all
distributions in $\mathcal{D}(\gamma,p;h_D)$,
$\mathcal{D}(\gamma,p;h_R)$, and $\mathcal{D}(\gamma,p;h_{\tau^*})$ with
population dependence coefficient matrices $\fD=[D_{jk}]$,
$\fR=[R_{jk}]$, and $\fT^*=[\tau^*_{jk}]$ for $D_{jk}:=\E \hat D_{jk}$,
$R_{jk}:=\E \hat R_{jk}$, and $\tau^*_{jk}:=\E \hat\tau^*_{jk}$,
respectively.
\end{corollary}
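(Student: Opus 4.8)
The plan is to read Corollary~\ref{crl:power_eg} off Theorem~\ref{thm:power} by instantiating that theorem at the three kernels $h_D$, $h_R$, and $h_{\tau^*}$, which are symmetric rank-based U-statistic kernels of orders $m=5$, $6$, and $4$, respectively. First I would recall that each of these kernels satisfies all three items of Assumption~\ref{assumption:key}: boundedness and mean-zero degeneracy under independent continuous margins come from \citet{MR0029139} for $h_D$, from \citet{MR0125690} for $h_R$, and from \citet{MR3178526} for $h_{\tau^*}$; the uniformly bounded eigenfunction expansion of $h_{D,2}$ under $\Pr_0\otimes\Pr_0$ is the one exhibited in Example~\ref{eg:hd}, and the expansions for $h_{R,2}=2h_{D,2}$ and $h_{\tau^*,2}=3h_{D,2}$ follow from it by a common positive rescaling of the eigenvalues (the eigenfunctions, and hence $\sup_v\lVert\phi_v\rVert_\infty$, are unchanged). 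Theorem~\ref{thm:power} then applies to each kernel with the given $\gamma$ and produces a constant; taking $C_\gamma$ to be the largest of the three (recall $\mathcal{U}_p(C)$ shrinks as $C$ grows, so the conclusion is only reinforced), we obtain $\liminf_{n,p\to\infty}\inf\P(\mathsf{T}_\alpha=1)=1$, the infimum over all distributions in $\mathcal{D}(\gamma,p;h)$ whose matrix of population dependence coefficients lies in $\mathcal{U}_p(C_\gamma)$; for $h=h_D$ those coefficients are the $D_{jk}=\E\hat D_{jk}$, for $h=h_R$ the $R_{jk}$, and for $h=h_{\tau^*}$ the $\tau^*_{jk}$, exactly matching the matrices $\fD$, $\fR$, $\fT^*$ in the statement.

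The only point that needs a word is that the concrete tests $\mathsf{T}_{D,\alpha}$, $\mathsf{T}_{R,\alpha}$, $\mathsf{T}_{\tau^*,\alpha}$ are literally the general test $\mathsf{T}_\alpha$ of \eqref{eq:general} for the corresponding kernels, so that Theorem~\ref{thm:power} is indeed a statement about these very tests. This is the same bookkeeping already carried out when the tests were defined in Examples~\ref{eg:hd2}--\ref{eg:bd2}: for $h_D$ one has $\lambda_1=3/\pi^4$ with multiplicity $\mu_1=1$, $\Lambda=1/12$, $\binom{m}{2}=10$, and $\kappa=\kappa_D$, so $\frac{n-1}{\lambda_1\binom{m}{2}}=\frac{\pi^4(n-1)}{30}$, $\Lambda/\lambda_1=\pi^4/36$, $(\mu_1-2)\log\log p=-\log\log p$, and $Q_\alpha=Q_{D,\alpha}$, which is exactly $\mathsf{T}_{D,\alpha}$; for $h_R$ the eigenvalues of $h_{R,2}$ are those of $h_{D,2}$ scaled by $2$, giving $\lambda_1=6/\pi^4$, $\Lambda=1/6$, $\binom{m}{2}=15$, hence $\frac{\pi^4(n-1)}{90}$ and $\Lambda/\lambda_1=\pi^4/36$, while the ratios $\lambda_v/\lambda_1$ and therefore $\kappa=\prod_{v>\mu_1}(1-\lambda_v/\lambda_1)^{-1/2}$ are unchanged, so $\kappa=\kappa_D$ and $Q_{R,\alpha}=Q_{D,\alpha}$; and for $h_{\tau^*}$ the eigenvalues are scaled by $3$, giving $\lambda_1=9/\pi^4$, $\Lambda=1/4$, $\binom{m}{2}=6$, hence $\frac{\pi^4(n-1)}{54}$, $\Lambda/\lambda_1=\pi^4/36$, again $\kappa=\kappa_D$ and $Q_{\tau^*,\alpha}=Q_{D,\alpha}$. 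In all three cases \eqref{eq:general} collapses to the displayed test.

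Assembling the above yields the three stated limits. I do not expect a genuine obstacle here: in contrast to Corollary~\ref{crl:size_eg}, no growth restriction tying $p$ to $n$ is required, since Theorem~\ref{thm:power} already holds under $n,p\to\infty$ alone; the entire content is the constant-matching, which has effectively been dispatched in defining the tests. The mildly delicate step, if any, is the invariance of $\kappa$ under multiplying every eigenvalue by a common positive factor --- immediate from its definition --- which is precisely what makes $Q_{R,\alpha}$ and $Q_{\tau^*,\alpha}$ equal $Q_{D,\alpha}$.
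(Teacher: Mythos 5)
Your proposal is correct and takes the same route as the paper: the paper treats the corollary as an immediate specialization of Theorem~\ref{thm:power} to the kernels $h_D$, $h_R$, $h_{\tau^*}$ (which satisfy Assumption~\ref{assumption:key} as verified in Examples~\ref{eg:hd}--\ref{eg:bd}), with the constant-matching showing $\mathsf{T}_{D,\alpha}$, $\mathsf{T}_{R,\alpha}$, $\mathsf{T}_{\tau^*,\alpha}$ are instances of \eqref{eq:general} already done in Examples~\ref{eg:hd2}--\ref{eg:bd2}. Your explicit verification of the constants (including the invariance of $\kappa$ under a common rescaling of the eigenvalues and taking $C_\gamma$ as the largest of the three kernel-specific constants) is exactly the bookkeeping the paper leaves implicit.
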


We now turn to optimality of the proposed tests. There have been long
debates on the power of consistent rank-based tests compared to those
based on linear and simple rank correlation measures.  As a matter of fact, 
\citet{MR0125690} have given interesting comments on this topic,
stating that the required sample size for the bivariate independence
test based on $h_R(\cdot)$ is of the same order as that in common
parametric cases, hinting that even under a particular parametric model these
nonparametric consistent tests of independence can be as rate-efficient as
tests that specifically target the considered model. {\citet{MR3766953} and \citet{MR3737306}, among many others, derived rate-optimality results for rank-based tests. However, their results do not cover those that permit consistent assessment of pairwise independence.} 
Recently, \citet{MR3798874} 
made a first step towards a minimax optimality result for consistent
tests of independence.  Their result shows an infeasible version of a
test based on distance covariance to be rate-optimal against a
Gaussian dense alternative.  However, it remained an open question if
there exists a feasible (consistent) test of mutual  independence in
high dimensions that is rate-optimal against certain alternatives.
Below we are able to  give an affirmative answer.


We shall focus on the proposed tests in Examples
\ref{eg:hd2}--\ref{eg:bd2} and show their rate-optimality in the
Gaussian model.  To this end, we define a new alternative class of
matrices 
\begin{align*}
\mathcal{V}(C)\!:=\! \Big\{ \fM\!\in\! \R^{p\times p}\!\!: \fM \succeq 0,  {\rm diag}(\fM)\!=\!\fI_p, \fM\!=\!\fM^\top, \max_{j\ne k}|M_{jk}|\geq C\sqrt{\frac{\log p}{n}}\Big\},
\end{align*}
where $\fM \succeq 0$ denotes positive semi-definiteness.  We
then have the following theorem as a consequence of Corollary
\ref{crl:power_eg}. It concerns the proposed tests' power under a
Gaussian model with  some nonzero pairwise correlations but for which
these  are decaying to zero as the sample size increases, {and is immediate from the fact that, as $(X_j,X_k)^\top$ is bivariately normal with correlation $\rho_{jk}$, we have 
\[D_{jk}, R_{jk}, \tau_{jk}^*\asymp \rho_{jk}^2~~{\rm as}~\rho_{jk}\to 0.\]
}%
Since the test statistics are all rank-based and thus invariant to monotone marginal transformations, extension of the following result to the corresponding Gaussian copula family with continuous margins is straightforward.
\begin{theorem}[Power analysis, Gaussian]\label{crl:normal}
For a sufficiently large absolute constant $C_0>0$, we have, {as long as $n,p\to\infty$},
\begin{align*}
&\inf_{\mSigma\in\mathcal{V}(C_0)}\P_{\mSigma}(\mathsf{T}_{D,\alpha}=1)=1-o(1), ~~~
 \inf_{\mSigma\in\mathcal{V}(C_0)}\P_{\mSigma}(\mathsf{T}_{R,\alpha}=1)=1-o(1), \\
\text{and}~~~
&\inf_{\mSigma\in\mathcal{V}(C_0)}\P_{\mSigma}(\mathsf{T}_{\tau^*,\alpha}=1)=1-o(1), 
\end{align*}
where infima are over centered Gaussian distributions
with (Pearson) covariance matrix $\mSigma=[\Sigma_{jk}]$.
\end{theorem}

The proof of Theorem \ref{crl:normal} is given in the supplement.  It relies on Lemma \ref{lem:normal}
and the fact that $D_{jk}, R_{jk}, \tau^*_{jk}\asymp \Sigma_{jk}^2$ as
$\Sigma_{jk}\to 0$. 
Combined with the following result from \citet{MR3737306}, Theorem
\ref{crl:normal} yields minimax rate-optimality of the tests in
Examples \ref{eg:hd2}--\ref{eg:bd2} against the sparse Gaussian alternative. 

\begin{theorem}[Rate optimality, Theorem 5 in \citeauthor{MR3737306},
  \citeyear{MR3737306}]\label{thm:opt}
  There exists an absolute constant $c_0>0$ such that for any number
  $\beta>0$ satisfying $\alpha+\beta<1$, in any asymptotic regime with
  $p\to\infty$ as $n\to\infty$ but $\log p/n=o(1)$, it holds for all
  sufficiently large $n$ and $p$ that
  \[
    \inf_{\overline{\mathsf{T}}_{\alpha}\in\cT_{\alpha}}\sup_{\mSigma\in\mathcal{V}(c_0)}\Pr_{\mSigma}(\overline{\mathsf{T}}_{\alpha}=0)\geq 1-\alpha-\beta.
  \]
  Here the infimum is taken over all size-$\alpha$ tests, and the
  supremum is taken over all centered Gaussian distributions with
  (Pearson) covariance matrix $\mSigma$.
\end{theorem}

\section{Simulation studies}\label{sec:simulation}

In this section we compare the finite-sample performance of the three 
tests (Extreme $D$, Extreme $R$, and Extreme $\tau^*$) from Section
\ref{sec:tests} to eight existing tests proposed in the literature  via Monte Carlo simulations. The first eight tests are rank-based and hence distribution-free {in the class of multivariate distributions with continuous margins}, while the other three tests are distribution-dependent:
\begin{itemize}[label={},itemindent=-2.5em]
{\item DHS$_D$: the maximum-type test in Example \ref{eg:hd2};
\item DHS$_R$: the maximum-type test in Example \ref{eg:bkr2};
\item DHS$_{\tau^*}$: the maximum-type test in Example \ref{eg:bd2};}
\item LD$_\tau$: the $L_2$-type test based on Kendall's $\tau$ \citep{MR3766953};
\item LD$_\rho$: the $L_2$-type test based on Spearman's $\rho$ \citep{MR3766953};
\item LD$_{\tau^*}$: the $L_2$-type test based on Bergsma--Dassios--Yanagimoto's $\tau^*$ \citep{MR3766953}; 
\item HCL$_\tau$: the maximum-type test based on Kendall's $\tau$ \citep{MR3737306};
\item HCL$_\rho$: the maximum-type test based on Spearman's $\rho$ \citep{MR3737306};
\item YZS: the $L_2$-type test based on the distance covariance statistic \citep{MR3798874};
\item SC: the $L_2$-type test based on Pearson's $r$ \citep{MR2234197};
\item CJ: the maximum-type test based on Pearson's $r$ \citep{MR2850210}.
\end{itemize}

\subsection{Computational aspects}

Throughout this section 
$\{\mz_i=(z_{i,1},z_{i,2})^\top\}_{i\in[n]}$ 
is a bivariate sample
that contains no tie.
We first discuss how to compute the U-statistics $\hat D$, $\hat R$,
and $\hat\tau^*$ for Hoeffding's $D$, Blum--Kiefer--Rosenblatt's $R$, and
Bergsma--Dassios--Yanagimoto's $\tau^*$, respectively.   As we review
below, efficient algorithms
are available for $\hat D$ and $\hat\tau^*$.  The value of $\hat R$ may
then be found using the relation in \eqref{eq:identity2}.


\citet{MR0029139} himself
observed that $\hat D$ can be computed in $O(n\log n)$ time via the following formula
\begin{equation*}
\frac{\hat D}{30}=\frac{P-2(n-2)Q+(n-2)(n-3)S}{n(n-1)(n-2)(n-3)(n-4)}.
\end{equation*}
Here
\begin{align*}
P&:=\sum_{i=1}^n (r_i-1)(r_i-2)(s_i-1)(s_i-2),\\
Q&:=\sum_{i=1}^n (r_i-2)(s_i-1)c_i,~~~
S:=\sum_{i=1}^n c_i(c_i-1),
\end{align*}
and $r_i$ and $s_i$ are the ranks of $z_{i,1}$ among
$\{z_{1,1},\ldots,z_{n,1}\}$ and $z_{i,2}$ among
$\{z_{1,2},\ldots,z_{n,2}\}$, respectively.   Moreover, $c_i$ is the
number of pairs $\mz_{i'}$ for which $z_{i',1}<z_{i,1}$ and
$z_{i',2}<z_{i,2}$.

\citet{MR3481807} and \citet{heller2016computing} proposed algorithms
for efficient computation of the Bergsma--Dassios--Yanagimoto statistic
$\hat\tau^*$.  
Without loss of generality, let $z_{1,1}<\cdots<z_{n,1}$, i.e.,
$r_i=i$. \citet{MR3481807} proved that 
$2\hat\tau^*/3=N_c/\binom{n}{4}-1/3$ with
\begin{equation*}
N_c=\sum_{3\le \ell<\ell'\le n}\mbinom{\fB_{<}[\ell,\ell']}{2}+\mbinom{\fB_{>}[\ell,\ell']}{2},
\end{equation*}
where for all $\ell<\ell'$,
\begin{align*}
\fB_{<}[\ell,\ell']&:=\#\{i:i\in[\ell-1], z_{i,2}<\min(z_{\ell,2},z_{\ell',2})\}\\
\text{and}~~~
\fB_{>}[\ell,\ell']&:=\#\{i:i\in[\ell-1], z_{i,2}\mkern-1mu>\mkern-1mu\max(z_{\ell,2},z_{\ell',2})\}.
\end{align*}
\citet{MR3481807} went on to give an algorithm to compute these
counts, and thus $\hat\tau^*$, in
$O(n^2\log n)$ time with little
memory use.  \citet{heller2016computing} showed that the computation
time can be further lowered to $O(n^2)$ via calculation of  the following
matrix based on the empirical 
distribution of the ranks $r_i$ and $s_i$,
\begin{equation*}
\fB[r,s]:=\sum_{i=1}^n \ind(r_i\le r,s_i\le s),~~~0\le r,s\le n.
\end{equation*}
Here, $\fB[r,0]:=0$ and $\fB[0,s]:=0$.   We may then find
$\fB_{<}[\ell,\ell']=\fB[\ell-1,\min(s_\ell,s_{\ell'})-1]$ and
$\fB_{>}[\ell,\ell']=\ell-\fB[\ell,\max(s_\ell,s_{\ell'})]$ for all
$\ell<\ell'$; recall that $s_i$ is the rank of $z_{i,2}$ in
$\{z_{1,2},\ldots,z_{n,2}\}$.
As a consequence, formula \eqref{eq:identity2} now also yields an
$O(n^2)$ algorithm for $\hat R$.

Regarding  other competing statistics, note that 
Pearson's $r$ and Spearman's $\rho$ can be naively computed in time
$O(n)$ and $O(n\log n)$, respectively.  \citet{zbMATH03224091} proposed
an efficient algorithm for computing Kendall's $\tau$ that has time
complexity $O(n\log n)$.  Finally, 
the algorithm of \citet{MR3556612} computes the distance covariance
statistic in $O(n\log n)$ time.  

{Table \ref{tab:time} shows empirical computation times for the
  considered statistics on 1,000 bivariate samples of size $n=100,
  200, 400$, and 800, respectively randomly generated as
  i.i.d.~standard bivariate normal.  The timings are based on
  available functions in \texttt{R}.  Pearson's $r$ and
  Spearman's $\rho$ were computed using the basic \texttt{cor()}
  function, with option \texttt{method="spearman"} for $\rho$.
  Kendall's $\tau$ was computed with the function \texttt{cor.fk()}
  from package \texttt{pcaPP},  
  Hoeffding's $D$ with \texttt{hoeffD()} from
  \texttt{SymRC},  Bergsma--Dassios--Yanagimoto's $\tau^*$ with
  \texttt{tStar()} from \texttt{TauStar}, and the distance
  covariance with \texttt{dcov2d()} from \texttt{energy}.
  Blum--Kiefer--Rosenblatt's $\hat R$ was then obtained using identity \eqref{eq:identity2}, and its computation time is thus omitted.
  All experiments are conducted on a laptop with 
  a 2.6 GHz Intel Core i5 processor and a 8 GB memory.
}

{
\renewcommand{\tabcolsep}{1.5pt}
\renewcommand{\arraystretch}{1.0}
\begin{table}[t]
\centering
\caption{A comparison of computation time for all the correlation statistics considered. 
The computation time here is the averaged elapsed time (in milliseconds) of 1,000 replicates of a single experiment.}
\label{tab:time}{
\footnotesize
\begin{tabular}{cC{0.7in}C{0.5in}
                 C{0.6in}C{0.7in}C{0.6in}
                 C{0.6in}}
$n$  &  Hoeffding's $D$  & BDY's $\tau^*$ & 
        Pearson's $r$ & Spearman's $\rho$ & Kendall's $\tau$ & 
        distance correlation \\
100  &  0.270  & 0.167 & 0.060 & 0.121 & 0.064 & 0.667 \\
200  &  0.962  & 0.543 & 0.080 & 0.144 & 0.085 & 1.194 \\
400  &  4.419 & 2.364 & 0.099 & 0.206 & 0.106 & 2.313 \\
800  &  9.683  & 20.860 & 0.103 & 0.327 & 0.148 & 4.410 \\
\end{tabular}} 
\end{table} 
}

{While the above statistics can all be computed efficiently using
  special purpose algorithms, our theory also covers
  general rank-based statistics for which only a naive algorithm that
  follows the U-statistic definition may be available.  
  The complexity of computing the  statistic could then be  a high
  degree polynomial of the sample size.   We note that in this case,
  it may become necessary to use resampling and subsampling techniques
  to decrease computational effort, as was done by \citet[Section 4]{MR3178526} when applying their statistics before
  efficient algorithms for its computation were developed.

}

\subsection{Simulation results}

We evaluate the empirical sizes and powers of the eleven competing tests
introduced above for both Gaussian and non-Gaussian distributions.
The values reported below are based on $5,000$ simulations at the
nominal significance level of $0.05$, with sample size
$n\in\{100,200\}$ and dimension $p\in\{50,100,200,400,800\}$.  All
data sets are generated as an i.i.d.~sample from the distribution
specified for the $p$-dimensional random vector $\mX$.

We investigate the sizes of the tests in four settings, where 
$\mX=(X_1,\ldots,X_p)^\top$ has mutually independent entries.  {In the following, with slight abuse of notation, we write $f(\mv)=(f(v_1),\ldots,f(v_p))^\top$ for any univariate function $f:\mathbb{R}\to\mathbb{R}$ and $\mv=(v_1,\ldots,v_p)^\top\in\mathbb{R}^p$. } 

\begin{example}\label{eg:sim-size} 
  \mbox{ }
\begin{enumerate}[label=(\alph*)]
\item\label{sim:1a}
  $\mX\sim N_p(0,\fI_p)$  (standard Gaussian).
\item\label{sim:1b}
  $\mX=\mW^{1/3}$ with $\mW\sim N_p(0,\fI_p)$  (light-tailed Gaussian copula).
\item\label{sim:1c}
  $\mX=\mW^3$ with $\mW\sim N_p(0,\fI_p)$  (heavy-tailed Gaussian copula).
\item\label{sim:1d} 
  $X_1,\ldots,X_p$ are i.i.d.~with a $t$-distribution with 3 degrees of
  freedom.
\end{enumerate}
\end{example}
The simulated sizes of the eight {rank-based} tests are reported in Table~\ref{tab:size-1}. Those of the three distribution-dependent tests are given in Table~\ref{tab:size-2}.
As expected, the tests derived from Gaussianity (SC, CJ) fail to
control the size for heavy-tailed distributions.  In contrast, the
other tests control the size effectively in most circumstances.  A
slight size inflation is observed for DHS$_D$ at small sample size, which can be addressed using Monte Carlo approximation to set the critical value.
In addition, when considering different pairs of $(n,p)$ in Table \ref{tab:size-1}, as long as $n$ and $p$ grow simultaneously, a trend to the nominal level 0.05 is clear; e.g., as $(n,p)$ grows from $(100,200)$ to $(200,400)$, the empirical size of ${\rm DHS}_D$ changes from 0.076 to 0.064, that of ${\rm DHS}_R$ changes from 0.028 to 0.040, and that of ${\rm DHS}_{\tau^*}$ changes from 0.036 to 0.045. These phenomena back up Corollary \ref{crl:size_eg}, and this trend persists in more simulations as $n$ and $p$ become even larger.

{
\renewcommand{\tabcolsep}{2.5pt}
\renewcommand{\arraystretch}{1.0}
\begin{table}[t]
\centering
\caption{Empirical sizes of the eight rank-based tests in Example \ref{eg:sim-size}}
\label{tab:size-1}{
\footnotesize
\begin{tabular}{ccC{0.375in}C{0.375in}C{0.375in}
                  C{0.35in}C{0.35in}C{0.35in}
                  C{0.35in}C{0.35in}}
$n$ & $p$ &  DHS$_D$ & DHS$_R$ & DHS$_{\tau^*}\!$ & 
             LD$_{\tau}$ & LD$_{\rho}$ & LD$_{\tau^*}$ & 
             HCL$_{\tau}$ & HCL$_{\rho}$\\
100 &  50 & 0.070 & 0.042 & 0.047 & 0.054 & 0.048 & 0.056 & 0.037 & 0.028\\
    & 100 & 0.073 & 0.035 & 0.042 & 0.055 & 0.047 & 0.066 & 0.034 & 0.021\\
    & 200 & 0.076 & 0.028 & 0.036 & 0.058 & 0.050 & 0.059 & 0.028 & 0.015\\
    & 400 & 0.084 & 0.025 & 0.035 & 0.054 & 0.045 & 0.065 & 0.025 & 0.012\\
    & 800 & 0.088 & 0.021 & 0.032 & 0.055 & 0.049 & 0.062 & 0.023 & 0.008\\
200 &  50 & 0.054 & 0.042 & 0.044 & 0.048 & 0.044 & 0.051 & 0.037 & 0.034\\
    & 100 & 0.057 & 0.042 & 0.044 & 0.052 & 0.047 & 0.052 & 0.038 & 0.032\\
    & 200 & 0.059 & 0.038 & 0.042 & 0.052 & 0.050 & 0.055 & 0.037 & 0.032\\
    & 400 & 0.064 & 0.040 & 0.045 & 0.051 & 0.048 & 0.053 & 0.038 & 0.027\\
    & 800 & 0.065 & 0.034 & 0.040 & 0.051 & 0.047 & 0.055 & 0.034 & 0.024\\
\end{tabular}}
\end{table}
}

{
\renewcommand{\tabcolsep}{0.5pt}
\renewcommand{\arraystretch}{1.0}
\begin{table}[t]
\centering
\caption{Empirical sizes of the three distribution-dependent tests in Example \ref{eg:sim-size}}
\label{tab:size-2}{
\footnotesize
\begin{tabular}{ccC{0.35in}C{0.35in}C{0.35in}c
                  C{0.35in}C{0.35in}C{0.35in}c
                  C{0.35in}C{0.35in}C{0.35in}c
                  C{0.35in}C{0.35in}C{0.35in}}
$n$ & $p$ &  YZS & SC & CJ && 
             YZS & SC & CJ && 
             YZS & SC & CJ && 
             YZS & SC & CJ \\
    &     &  \multicolumn{3}{c}{Results for Case \ref{sim:1a}} &&
             \multicolumn{3}{c}{Results for Case \ref{sim:1b}} &&
             \multicolumn{3}{c}{Results for Case \ref{sim:1c}} &&
             \multicolumn{3}{c}{Results for Case \ref{sim:1d}} \\
100 &  50 & 0.048 & 0.051 & 0.029 && 0.052 & 0.052 & 0.036 && 0.055 & 0.210 & 0.974 && 0.055 & 0.081 & 0.479\\
    & 100 & 0.054 & 0.052 & 0.018 && 0.048 & 0.047 & 0.032 && 0.052 & 0.206 & 1.000 && 0.053 & 0.083 & 0.781\\
    & 200 & 0.059 & 0.051 & 0.013 && 0.055 & 0.055 & 0.024 && 0.052 & 0.207 & 1.000 && 0.058 & 0.089 & 0.974\\
    & 400 & 0.049 & 0.049 & 0.011 && 0.053 & 0.051 & 0.022 && 0.052 & 0.210 & 1.000 && 0.055 & 0.089 & 1.000\\
    & 800 & 0.050 & 0.045 & 0.005 && 0.050 & 0.048 & 0.018 && 0.055 & 0.222 & 1.000 && 0.051 & 0.092 & 1.000\\
200 &  50 & 0.050 & 0.044 & 0.032 && 0.050 & 0.052 & 0.040 && 0.054 & 0.194 & 0.955 && 0.050 & 0.086 & 0.527\\
    & 100 & 0.049 & 0.049 & 0.029 && 0.049 & 0.051 & 0.036 && 0.048 & 0.190 & 1.000 && 0.052 & 0.089 & 0.850\\
    & 200 & 0.053 & 0.049 & 0.030 && 0.052 & 0.053 & 0.035 && 0.055 & 0.193 & 1.000 && 0.050 & 0.085 & 0.996\\
    & 400 & 0.051 & 0.049 & 0.022 && 0.050 & 0.048 & 0.035 && 0.050 & 0.193 & 1.000 && 0.050 & 0.091 & 1.000\\
    & 800 & 0.050 & 0.053 & 0.018 && 0.051 & 0.053 & 0.033 && 0.052 & 0.188 & 1.000 && 0.049 & 0.088 & 1.000\\
\end{tabular}}
\end{table}
}

In order  to study the power properties of the different statistics,
we consider three sets of examples.  
We remark that, regarding the power, for $L_2$-type and
  maximum-type tests, one cannot dominate the other; compare the power
  analyses in Section 3.3 in \citet{MR3174618} and Section 5.2 in
  \citet{MR3766953}. To reflect this, we consider two sets of examples that
  focus on relatively sparse settings (modified based on \citet{MR3798874} and \citet{MR3737306}) but also include a very dense third
  setup \nb{(modified based on \citet{MR3766953} with an adjustment to dimension as suggested in \citet[Theorems~1 and 4]{MR3160557})}.


\begin{example}\label{eg:sim-power2}
    \mbox{ }
\begin{enumerate}[label=(\alph*)]
\item\label{sim:3a'} The data are generated as $\mX=(\mX_1^\top,\mX_2^\top)^\top$, where 
\[
\mX_1=(\momega^\top,\sin(2\pi\momega)^\top,\cos(2\pi\momega)^\top,\sin(4\pi\momega)^\top,\cos(4\pi\momega)^\top)^\top\in\R^{10}
\]
with $\momega\sim N_{2}(0,\fI_{2})$, and $\mX_2\sim
N_{p-10}(0,\fI_{p-10})$ independent of $\mX_1$.
\item\label{sim:3b'} The data are generated as $\mX=(\mX_1^\top,\mX_2^\top)^\top$, where 
\[
\mX_1=(\momega^\top,\log(\momega^2)^\top)^\top\in\R^{10}
\]
with $\momega\sim N_{5}(0,\fI_{5})$, and $\mX_2\sim
N_{p-10}(0,\fI_{p-10})$ independent of $\mX_1$.
\end{enumerate}
\end{example}


\begin{example}\label{eg:sim-power3}
    \mbox{ }
\begin{enumerate}[label=(\alph*)]
\item\label{sim:4a} The data are drawn as $\mX\sim N_p(0,\fR^*)$ with
  $\fR^*$ generated as follows: Consider a random matrix $\mDelta$
  with all but eight random nonzero entries. We select the locations
  of four nonzero entries randomly from the upper triangle of
  $\mDelta$, each with a magnitude randomly drawn from the uniform
  distribution in $[0, 1]$. The other four nonzero entries in the
  lower triangle are determined to make $\mDelta$ symmetric. Finally,
\[
\fR^*=(1+\delta)\fI_p+\mDelta, 
\]
where $\delta=\{-\lambda_{\min}(\fI_p+\mDelta)+0.05\}\cdot \ind\{\lambda_{\min}(\fI_p+\mDelta)\le 0\}$ and $\lambda_{\min}(\cdot)$ denotes the smallest eigenvalue of the input.
\item\label{sim:4b} The data are drawn as $\mX=\sin(2\pi
  \mZ^{1/3}/3)$, where $\mZ\sim N_p(0,\fR^*)$ with $\fR^*$ as in \ref{sim:4a}.
\item\label{sim:4c} The data are drawn as $\mX=\sin(\pi \mZ^3/4)$, where $\mZ\sim N_p(0,\fR^*)$ with $\fR^*$ as in \ref{sim:4a}.
\end{enumerate}
\end{example}

\begin{example}\label{eg:sim-power4}
    The data are drawn as $\mX\sim N_p(0,\fR^*)$, where
  $\fR^*=(1-\varrho)\fI_p+\varrho\fJ_p$ with $\varrho$ such that \nb{
    \begin{enumerate}[label=(\alph*)]      
\item\label{sim:5a}  $\binom{p}{2}(2\arcsin\varrho/\pi)^2=p/n$;
\item\label{sim:5b}  $\binom{p}{2}(2\arcsin\varrho/\pi)^2=(3/2)\cdot p/n$;
\item\label{sim:5c}  $\binom{p}{2}(2\arcsin\varrho/\pi)^2=2p/n$.
\end{enumerate} }
\end{example}

The powers for  Examples \ref{eg:sim-power2}--\ref{eg:sim-power4} are
reported in Tables \ref{tab:power1}--\ref{tab:power4}.  Several
observations stand out.  First, throughout the sparse examples,
we found that the proposed tests have the highest powers on average.
Among the three proposed tests, the power of
DHS$_D$ is highest on average, followed by DHS$_{\tau^*}$.  Recall,
however, that DHS$_D$ can be subject to slight size inflation.
Second, focusing on the results in Example~\ref{eg:sim-power2}, 
we note that, as more independent components are
added, the power of YZS significantly decreases. This is as expected
and indicates that YZS is less powerful in detection of sparse
dependences. In addition, both HCL$_\tau$ and HCL$_\rho$ perform
unsatisfactorily in Example~\ref{eg:sim-power2}, indicating that they are powerless
in detecting the considered non-linear, non-monotone dependences, an observation that was also made in \citet{MR3798874}. 
Fourth, Tables \ref{tab:power1} and \ref{tab:power3} jointly confirm
the intuition that, for sparse alternatives, the proposed maximum-type
tests dominate $L_2$-type ones including both YZS and LD$_{\tau^*}$,
especially when $p$ is large. 
In addition, we note that, under the setting of Example
\ref{eg:sim-power3}, the performances of HCL$_\tau$ and HCL$_\rho$ are
the second best to the proposed consistent rank-based tests,
indicating that there exist cases in which simple rank correlation
measures like Kendall's $\tau$ and Spearman's $\rho$ can still detect
aspects of non-linear non-monotone dependences. {Fifth, under a Gaussian parametric model, Table \ref{tab:power3} (the first part) shows that  CJ, the maximum-type test based on Pearson's $r$,  indeed outperforms all others, though the difference between it and
  the proposed rank-based ones is small.} {Lastly, Table \ref{tab:power4} shows that, as the signals are rather dense, $L_2$-type tests dominate the maximum-type ones, confirming the intuition and also the theoretical findings that $L_2$-type ones are more powerful in the dense setting.}

{
\renewcommand{\tabcolsep}{1.5pt}
\renewcommand{\arraystretch}{1.0}
\begin{table}[t]
\centering
\caption{Empirical powers of the eleven competing tests in Example \ref{eg:sim-power2}}{
\footnotesize
\begin{tabular}{ccC{0.37in}C{0.37in}C{0.37in}
                  C{0.35in}C{0.35in}C{0.35in}
                  C{0.35in}C{0.35in}
                  C{0.35in}C{0.35in}C{0.35in}}
$n$ & $p$ &  DHS$_D$ & DHS$_R$ & DHS$_{\tau^*}\!$ & 
             LD$_{\tau}$ & LD$_{\rho}$ & LD$_{\tau^*}$ & 
             HCL$_{\tau}$ & HCL$_{\rho}$ & 
             YZS & SC & CJ \\
    &     &  \multicolumn{11}{c}{Results for Example \ref{eg:sim-power2}\ref{sim:3a'}} \\
100 &  50 & 1.000 & 1.000 & 1.000 & 0.058 & 0.049 & 1.000 & 0.089 & 0.033 & 0.442 & 0.047 & 0.024\\
    & 100 & 1.000 & 1.000 & 1.000 & 0.055 & 0.045 & 1.000 & 0.070 & 0.025 & 0.156 & 0.049 & 0.018\\
    & 200 & 1.000 & 1.000 & 1.000 & 0.052 & 0.046 & 1.000 & 0.049 & 0.017 & 0.071 & 0.048 & 0.011\\
    & 400 & 1.000 & 1.000 & 1.000 & 0.058 & 0.049 & 0.973 & 0.043 & 0.014 & 0.057 & 0.050 & 0.011\\
    & 800 & 1.000 & 0.827 & 1.000 & 0.061 & 0.052 & 0.520 & 0.029 & 0.009 & 0.054 & 0.050 & 0.007\\
200 &  50 & 1.000 & 1.000 & 1.000 & 0.053 & 0.045 & 1.000 & 0.099 & 0.038 & 0.955 & 0.053 & 0.033\\
    & 100 & 1.000 & 1.000 & 1.000 & 0.055 & 0.051 & 1.000 & 0.080 & 0.038 & 0.435 & 0.050 & 0.032\\
    & 200 & 1.000 & 1.000 & 1.000 & 0.048 & 0.045 & 1.000 & 0.060 & 0.028 & 0.142 & 0.045 & 0.023\\
    & 400 & 1.000 & 1.000 & 1.000 & 0.052 & 0.047 & 1.000 & 0.049 & 0.023 & 0.078 & 0.048 & 0.023\\
    & 800 & 1.000 & 1.000 & 1.000 & 0.057 & 0.052 & 1.000 & 0.044 & 0.020 & 0.053 & 0.050 & 0.021\\
\vspace{-.5em}\\
    &     &  \multicolumn{11}{c}{Results for Example \ref{eg:sim-power2}\ref{sim:3b'}} \\
100 &  50 & 1.000 & 1.000 & 1.000 & 0.065 & 0.049 & 1.000 & 0.106 & 0.037 & 0.984 & 0.049 & 0.026\\
    & 100 & 1.000 & 1.000 & 1.000 & 0.054 & 0.046 & 1.000 & 0.078 & 0.026 & 0.660 & 0.046 & 0.020\\
    & 200 & 1.000 & 1.000 & 1.000 & 0.059 & 0.052 & 1.000 & 0.055 & 0.018 & 0.266 & 0.051 & 0.014\\
    & 400 & 1.000 & 1.000 & 1.000 & 0.059 & 0.052 & 0.996 & 0.039 & 0.014 & 0.107 & 0.046 & 0.010\\
    & 800 & 1.000 & 0.897 & 1.000 & 0.059 & 0.051 & 0.642 & 0.030 & 0.007 & 0.067 & 0.052 & 0.005\\
200 &  50 & 1.000 & 1.000 & 1.000 & 0.062 & 0.053 & 1.000 & 0.120 & 0.042 & 1.000 & 0.050 & 0.033\\
    & 100 & 1.000 & 1.000 & 1.000 & 0.053 & 0.047 & 1.000 & 0.087 & 0.040 & 0.996 & 0.045 & 0.036\\
    & 200 & 1.000 & 1.000 & 1.000 & 0.051 & 0.047 & 1.000 & 0.061 & 0.030 & 0.729 & 0.045 & 0.023\\
    & 400 & 1.000 & 1.000 & 1.000 & 0.053 & 0.050 & 1.000 & 0.050 & 0.023 & 0.272 & 0.053 & 0.023\\
    & 800 & 1.000 & 1.000 & 1.000 & 0.047 & 0.044 & 1.000 & 0.042 & 0.021 & 0.102 & 0.046 & 0.016\\    
\end{tabular}} 
\label{tab:power1}
\end{table}
}

{
\renewcommand{\tabcolsep}{1.5pt}
\renewcommand{\arraystretch}{1.0}
\begin{table}
\centering
\caption{Empirical powers of the eleven competing tests in Example \ref{eg:sim-power3}}{
\footnotesize
\begin{tabular}{ccC{0.37in}C{0.37in}C{0.37in}
                  C{0.35in}C{0.35in}C{0.35in}
                  C{0.35in}C{0.35in}
                  C{0.35in}C{0.35in}C{0.35in}}
$n$ & $p$ &  DHS$_D$ & DHS$_R$ & DHS$_{\tau^*}\!$ & 
             LD$_{\tau}$ & LD$_{\rho}$ & LD$_{\tau^*}$ & 
             HCL$_{\tau}$ & HCL$_{\rho}$ & 
             YZS & SC & CJ \\
    &     &  \multicolumn{11}{c}{Results for Example \ref{eg:sim-power3}\ref{sim:4a}}\\ 
100 &  50 & 0.967 & 0.962 & 0.964 & 0.705 & 0.586 & 0.946 & 0.970 & 0.966 & 0.555 & 0.624 & 0.973\\
    & 100 & 0.959 & 0.952 & 0.954 & 0.392 & 0.259 & 0.914 & 0.960 & 0.956 & 0.252 & 0.283 & 0.962\\
    & 200 & 0.950 & 0.938 & 0.942 & 0.161 & 0.107 & 0.840 & 0.950 & 0.943 & 0.109 & 0.115 & 0.950\\
    & 400 & 0.936 & 0.924 & 0.928 & 0.089 & 0.064 & 0.727 & 0.938 & 0.931 & 0.064 & 0.073 & 0.941\\
    & 800 & 0.931 & 0.911 & 0.918 & 0.061 & 0.049 & 0.539 & 0.929 & 0.916 & 0.051 & 0.051 & 0.931\\
200 &  50 & 0.991 & 0.991 & 0.991 & 0.912 & 0.891 & 0.988 & 0.993 & 0.992 & 0.871 & 0.906 & 0.993\\
    & 100 & 0.984 & 0.985 & 0.985 & 0.728 & 0.627 & 0.974 & 0.988 & 0.987 & 0.579 & 0.650 & 0.989\\
    & 200 & 0.984 & 0.983 & 0.983 & 0.408 & 0.278 & 0.954 & 0.987 & 0.985 & 0.255 & 0.299 & 0.988\\
    & 400 & 0.986 & 0.983 & 0.983 & 0.166 & 0.110 & 0.917 & 0.986 & 0.985 & 0.111 & 0.115 & 0.989\\
    & 800 & 0.980 & 0.976 & 0.978 & 0.073 & 0.060 & 0.839 & 0.983 & 0.980 & 0.058 & 0.063 & 0.986\\
\vspace{-.5em}\\
    &     &  \multicolumn{11}{c}{Results for Example \ref{eg:sim-power3}\ref{sim:4b}}\\ 
100 &  50 & 0.759 & 0.642 & 0.687 & 0.244 & 0.167 & 0.623 & 0.623 & 0.553 & 0.277 & 0.260 & 0.786\\
    & 100 & 0.747 & 0.624 & 0.670 & 0.131 & 0.091 & 0.555 & 0.607 & 0.540 & 0.131 & 0.125 & 0.758\\
    & 200 & 0.720 & 0.583 & 0.635 & 0.082 & 0.062 & 0.444 & 0.578 & 0.502 & 0.080 & 0.075 & 0.714\\
    & 400 & 0.702 & 0.557 & 0.615 & 0.065 & 0.054 & 0.333 & 0.549 & 0.471 & 0.060 & 0.061 & 0.678\\
    & 800 & 0.679 & 0.512 & 0.577 & 0.057 & 0.048 & 0.218 & 0.517 & 0.431 & 0.052 & 0.051 & 0.638\\
200 &  50 & 0.897 & 0.843 & 0.866 & 0.423 & 0.343 & 0.825 & 0.810 & 0.767 & 0.577 & 0.550 & 0.928\\
    & 100 & 0.880 & 0.819 & 0.846 & 0.248 & 0.170 & 0.753 & 0.784 & 0.732 & 0.287 & 0.273 & 0.912\\
    & 200 & 0.855 & 0.789 & 0.818 & 0.128 & 0.088 & 0.670 & 0.757 & 0.714 & 0.129 & 0.128 & 0.891\\
    & 400 & 0.849 & 0.768 & 0.799 & 0.074 & 0.059 & 0.571 & 0.743 & 0.689 & 0.065 & 0.064 & 0.875\\
    & 800 & 0.820 & 0.738 & 0.772 & 0.051 & 0.045 & 0.450 & 0.713 & 0.654 & 0.053 & 0.051 & 0.852\\
\vspace{-.5em}\\
    &     &  \multicolumn{11}{c}{Results for Example \ref{eg:sim-power3}\ref{sim:4c}}\\ 
100 &  50 & 0.654 & 0.579 & 0.608 & 0.209 & 0.137 & 0.541 & 0.582 & 0.513 & 0.111 & 0.106 & 0.365\\
    & 100 & 0.656 & 0.566 & 0.599 & 0.109 & 0.072 & 0.464 & 0.580 & 0.502 & 0.071 & 0.064 & 0.344\\
    & 200 & 0.635 & 0.527 & 0.571 & 0.069 & 0.055 & 0.364 & 0.539 & 0.455 & 0.056 & 0.051 & 0.311\\
    & 400 & 0.617 & 0.496 & 0.546 & 0.068 & 0.059 & 0.256 & 0.516 & 0.421 & 0.053 & 0.058 & 0.277\\
    & 800 & 0.597 & 0.455 & 0.507 & 0.055 & 0.049 & 0.164 & 0.487 & 0.370 & 0.055 & 0.049 & 0.238\\
200 &  50 & 0.824 & 0.789 & 0.803 & 0.396 & 0.302 & 0.750 & 0.785 & 0.753 & 0.238 & 0.211 & 0.606\\
    & 100 & 0.812 & 0.773 & 0.788 & 0.219 & 0.143 & 0.681 & 0.768 & 0.732 & 0.113 & 0.100 & 0.570\\
    & 200 & 0.792 & 0.752 & 0.767 & 0.101 & 0.072 & 0.596 & 0.750 & 0.711 & 0.063 & 0.059 & 0.543\\
    & 400 & 0.776 & 0.728 & 0.744 & 0.070 & 0.054 & 0.499 & 0.730 & 0.689 & 0.058 & 0.057 & 0.513\\
    & 800 & 0.755 & 0.699 & 0.723 & 0.052 & 0.048 & 0.360 & 0.699 & 0.646 & 0.044 & 0.051 & 0.473\\
\end{tabular}}
\label{tab:power3} 
\end{table}
}

{
\renewcommand{\tabcolsep}{1.5pt}
\renewcommand{\arraystretch}{1.0}
\begin{table}
\centering
\caption{ Empirical powers of the eleven competing tests in Example \ref{eg:sim-power4}}{
\footnotesize
\begin{tabular}{ccC{0.37in}C{0.37in}C{0.37in}
                  C{0.35in}C{0.35in}C{0.35in}
                  C{0.35in}C{0.35in}
                  C{0.35in}C{0.35in}C{0.35in}}
$n$ & $p$ &  DHS$_D$ & DHS$_R$ & DHS$_{\tau^*}\!$ & 
             LD$_{\tau}$ & LD$_{\rho}$ & LD$_{\tau^*}$ & 
             HCL$_{\tau}$ & HCL$_{\rho}$ & 
             YZS & SC & CJ \\
    &     &  \multicolumn{11}{c}{Results for Example \ref{eg:sim-power4}\ref{sim:5a}} \\
100 &  50 & 0.102 & 0.068 & 0.074 & 0.532 & 0.524 & 0.350 & 0.062 & 0.046 & 0.474 & 0.578 & 0.042\\
    & 100 & 0.104 & 0.056 & 0.066 & 0.578 & 0.560 & 0.361 & 0.052 & 0.036 & 0.492 & 0.620 & 0.033\\
    & 200 & 0.096 & 0.035 & 0.048 & 0.583 & 0.565 & 0.343 & 0.037 & 0.022 & 0.488 & 0.620 & 0.018\\
    & 400 & 0.104 & 0.040 & 0.050 & 0.542 & 0.534 & 0.320 & 0.038 & 0.018 & 0.471 & 0.610 & 0.012\\
    & 800 & 0.095 & 0.018 & 0.032 & 0.570 & 0.552 & 0.344 & 0.027 & 0.007 & 0.487 & 0.620 & 0.005\\
200 &  50 & 0.104 & 0.080 & 0.086 & 0.564 & 0.544 & 0.357 & 0.081 & 0.072 & 0.478 & 0.614 & 0.068\\
    & 100 & 0.073 & 0.052 & 0.059 & 0.590 & 0.580 & 0.357 & 0.054 & 0.043 & 0.509 & 0.654 & 0.052\\
    & 200 & 0.085 & 0.061 & 0.064 & 0.594 & 0.585 & 0.336 & 0.052 & 0.040 & 0.488 & 0.652 & 0.040\\
    & 400 & 0.075 & 0.040 & 0.049 & 0.604 & 0.591 & 0.332 & 0.038 & 0.028 & 0.498 & 0.668 & 0.024\\
    & 800 & 0.067 & 0.036 & 0.044 & 0.586 & 0.573 & 0.320 & 0.034 & 0.027 & 0.488 & 0.640 & 0.026\\
\vspace{-.5em}\\
    &     &  \multicolumn{11}{c}{Results for Example \ref{eg:sim-power4}\ref{sim:5b}} \\
100 &  50 & 0.130 & 0.078 & 0.086 & 0.792 & 0.782 & 0.554 & 0.076 & 0.064 & 0.722 & 0.836 & 0.055\\
    & 100 & 0.110 & 0.056 & 0.062 & 0.808 & 0.800 & 0.584 & 0.052 & 0.035 & 0.746 & 0.848 & 0.032\\
    & 200 & 0.099 & 0.046 & 0.060 & 0.810 & 0.800 & 0.553 & 0.042 & 0.026 & 0.738 & 0.850 & 0.021\\
    & 400 & 0.110 & 0.030 & 0.041 & 0.808 & 0.797 & 0.587 & 0.034 & 0.014 & 0.738 & 0.854 & 0.012\\
    & 800 & 0.098 & 0.020 & 0.033 & 0.816 & 0.804 & 0.579 & 0.023 & 0.008 & 0.745 & 0.872 & 0.006\\
200 &  50 & 0.116 & 0.094 & 0.098 & 0.802 & 0.801 & 0.546 & 0.103 & 0.084 & 0.718 & 0.858 & 0.098\\
    & 100 & 0.098 & 0.072 & 0.076 & 0.827 & 0.822 & 0.571 & 0.075 & 0.062 & 0.768 & 0.878 & 0.058\\
    & 200 & 0.063 & 0.040 & 0.042 & 0.848 & 0.840 & 0.570 & 0.036 & 0.030 & 0.764 & 0.888 & 0.030\\
    & 400 & 0.070 & 0.048 & 0.055 & 0.834 & 0.829 & 0.578 & 0.042 & 0.032 & 0.752 & 0.883 & 0.030\\
    & 800 & 0.081 & 0.036 & 0.046 & 0.866 & 0.862 & 0.560 & 0.041 & 0.028 & 0.788 & 0.907 & 0.030\\
\vspace{-.5em}\\
    &     &  \multicolumn{11}{c}{Results for Example \ref{eg:sim-power4}\ref{sim:5c}} \\
100 &  50 & 0.157 & 0.102 & 0.116 & 0.904 & 0.900 & 0.731 & 0.093 & 0.069 & 0.864 & 0.926 & 0.076\\
    & 100 & 0.124 & 0.067 & 0.082 & 0.914 & 0.909 & 0.738 & 0.058 & 0.036 & 0.878 & 0.943 & 0.042\\
    & 200 & 0.115 & 0.051 & 0.059 & 0.918 & 0.913 & 0.748 & 0.046 & 0.028 & 0.880 & 0.947 & 0.018\\
    & 400 & 0.112 & 0.034 & 0.046 & 0.930 & 0.926 & 0.738 & 0.038 & 0.017 & 0.888 & 0.954 & 0.009\\
    & 800 & 0.101 & 0.030 & 0.039 & 0.927 & 0.924 & 0.744 & 0.029 & 0.012 & 0.879 & 0.946 & 0.012\\
200 &  50 & 0.120 & 0.100 & 0.098 & 0.935 & 0.932 & 0.740 & 0.110 & 0.098 & 0.894 & 0.952 & 0.118\\
    & 100 & 0.107 & 0.082 & 0.085 & 0.941 & 0.939 & 0.740 & 0.072 & 0.066 & 0.892 & 0.960 & 0.065\\
    & 200 & 0.096 & 0.062 & 0.072 & 0.962 & 0.960 & 0.768 & 0.064 & 0.048 & 0.930 & 0.976 & 0.046\\
    & 400 & 0.077 & 0.042 & 0.046 & 0.964 & 0.962 & 0.792 & 0.037 & 0.028 & 0.930 & 0.978 & 0.024\\
    & 800 & 0.090 & 0.043 & 0.054 & 0.956 & 0.956 & 0.776 & 0.044 & 0.028 & 0.922 & 0.980 & 0.016\\
\end{tabular}}
\label{tab:power4}
\end{table}
}

We end this section with a discussion of the simulation-based
approach. In view of Proposition~\ref{prop:easy}, the distributions of
rank-based test statistics are invariant to the generating
distribution, and hence we may use simulations to approximate the exact distribution of 
\begin{equation*}
S := \frac{n-1}{\lambda_1\binom{m}{2}}\max_{j<k}\hat U_{jk}-4\log p-(\mu_1-2)\log\log p+\frac{\Lambda}{\lambda_1}.
\end{equation*}
In detail, we pick a large integer $M$ to be the number of
independent replications. For each $t\in[M]$, compute $S^{(t)}$ as the value of
$S$ for an $n\times p$
data matrix $\fX^{(t)}\in\R^{n\times p}$ drawn as having i.i.d.~Uniform(0,1) entries.   Let
$\hat F_{n,p;M}(y)=\frac{1}{M}\sum_{t=1}^{M}\ind\{S^{(t)}\leq y\}$,
$y\in\R$,
be the resulting empirical distribution function.  
For a specified
significance
level  $\alpha\in(0,1)$, we may now use the simulated quantile $\hat
Q_{\alpha,n,p;M}:=\inf\{y\in\R:\hat F_{n,p;M}(y)\geq 1-\alpha\}$ to form
the test
\begin{equation*}
\mathsf{T}_{\alpha}^{\text{exact}}:= \ind\Big\{\frac{n-1}{\lambda_1\binom{m}{2}}\max_{j<k}\hat U_{jk}-4\log p-(\mu_1-2)\log\log p+\frac{\Lambda}{\lambda_1}>\hat Q_{\alpha,n,p;M}\Big\}.
\end{equation*}
The test becomes exact in the large $M$ limit, immediately by the Dvoretzky--Kiefer--Wolfowitz inequality
for {empirical} distribution functions
\citep[e.g.,][Theorem 11.6]{MR2724368}, and is shown explicitly in the following proposition.

\begin{proposition}
Under the independence
  hypothesis $H_0$, for each $(n,p)$, we have with probability at least $1-2/M^2$ that
\begin{align*}
\sup_{\alpha\in[0,1]}\Big|\Pr\Big[S>\hat Q_{\alpha,n,p;M}\Big|\{\fX^{(t)}\}_{t=1}^{M}\Big]
- \Big\{1-\hat F_{n,p;M}(\hat Q_{\alpha,n,p;M})\Big\}\Big|\le\Big(\frac{\log M}{M}\Big)^{1/2}.
\end{align*}
\end{proposition}

Table \ref{tab:size-power} in the supplement gives the sizes and powers of the proposed tests with simulation-based critical values ($M=5,000$).  
The table shows results only for Examples~\ref{eg:sim-size}, \ref{eg:sim-power3}, and \ref{eg:sim-power4} as the simulated powers under Example~\ref{eg:sim-power2} were all perfectly one.  
It can be observed that all sizes are now well controlled, with powers of the proposed
tests only slightly different from the ones without using simulation. {An alternative to the simulation-based approach would be a permutation-based approach, but we find simulation based on the
pivotal null distribution simpler to analyze and with the advantage that
approximation errors can be made arbitrarily small via larger Monte
Carlo samples.}

{
\section{Discussion}\label{sec:discussion}

\subsection{Discussion of Assumption \ref{assumption:key}}\label{sec:key}

Assumption \ref{assumption:key} plays a key role in our analysis. It
synthesizes crucial properties satisfied by the three rank correlation statistics from Examples \ref{eg:hd}--\ref{eg:bd}.

From a more general perspective, one might ask whether there is an
exact relation between Assumption \ref{assumption:key} and the
properties of I- and D-consistency summarized in \citet{MR3842884}. As
a matter of fact, to our knowledge, most existing test statistics
(including rank-based, distance covariance-based, and kernel-based
ones) that permit consistent assessment of pairwise independence are
asymptotically equivalent to U-statistics with the corresponding
kernels degenerate under the null, which echoes Assumption 2.1(ii).
 The only exception is a new rank correlation measure that was just proposed \citep{chatterjee2019new}, whose limiting distribution is normal. Its analysis uses the permutation theory and, in particular, is not based on the U-statistic framework.
Assumption 2.1(iii), on the other hand, is much more specific and related to the particular properties of rank-based consistent tests. This assumption, however, is key to the establishment of Theorem \ref{thm:distr}.

\subsection{Discussion of $\tau^*$}\label{sec:tau}

In this section we give new perspectives on Bergsma--Dassios--Yanagimoto's correlation measure $\tau^*:=\E h_{\tau^*}$, introduced in Example \ref{eg:bd}.
\citet{MR0029139} stated a problem about the relationship between
equiprobable rankings and independence that was solved by
\citet{zbMATH03366369}.  In the proof of his Proposition 9,
\citet{zbMATH03366369} presented a correlation measure that is proportional to $\tau^*$ 
of Bergsma--Dassios if the pair is absolutely continuous. 
Accordingly, we term the correlation   
``Bergsma--Dassios--Yanagimoto's $\tau^*$''.
Yanagimoto's key relation gives rise to an interesting identity between Hoeffding's $D$,
Blum--Kiefer--Rosenblatt's $R$, and Bergsma--Dassios--Yanagimoto's
$\tau^*$ statistics.  This identity appears to be unknown in the literature.
In detail,  if $\mz_1,\ldots,\mz_6\in\R^2$ have no tie among their first and their second entries, respectively, then
\begin{align}\label{eq:identity}
    &3\cdot \mbinom{6}{5}^{-1}\sum_{1\leq i_1<\cdots<i_5\leq 6}h_D(\mz_{i_1},\ldots,\mz_{i_5})+2h_R(\mz_1,\ldots,\mz_6)\\
=\; &5\cdot \mbinom{6}{4}^{-1}\sum_{1\leq i_1<\cdots<i_4\leq 6}h_{\tau^*}(\mz_{i_1},\ldots,\mz_{i_4}). \notag
\end{align}
Equation \eqref{eq:identity} can be easily verified by calculating
all $6!$ entrywise permutations of $\{1,2,\ldots,6\}$, but may be false when ties exist.
Using the identity, we can make a step towards proving the conjecture
raised in \citet{MR3178526}, that is, for an arbitrary random pair $(Z_1, Z_2)^\top\in\R^2$, do we have $\E h_{\tau^*}\geq 0$ with equality if and only if $Z_1$ and $Z_2$ are independent?

\begin{theorem}\label{thm:ya}
For any random vector $\mZ=(Z_1,Z_2)^\top\in \R^2$ with continuous marginal distributions, we have $\E h_{\tau^*}\geq 0$ and the equality holds if and only if $Z_1$ is independent of $Z_2$.
\end{theorem}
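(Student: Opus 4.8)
The plan is to leverage the identity \eqref{eq:identity}, which expresses a positive combination of (symmetrized) $h_D$ and $h_R$ kernels as a positive multiple of the symmetrized $h_{\tau^*}$ kernel. Taking expectations in \eqref{eq:identity} with $\mz_1,\dots,\mz_6$ replaced by i.i.d.\ copies of $\mZ$ (which have no ties almost surely since the margins are continuous), the left-hand side becomes $3\,\E h_D + 2\,\E h_R$ and the right-hand side becomes $5\,\E h_{\tau^*}$, so that
\begin{equation*}
5\,\E h_{\tau^*} \;=\; 3\,\E h_D \;+\; 2\,\E h_R.
\end{equation*}
Thus it suffices to prove that for any $\mZ=(Z_1,Z_2)^\top$ with continuous margins we have $\E h_D\ge 0$, $\E h_R\ge 0$, and — for the characterization of equality — that $\E h_R=0$ if and only if $Z_1$ and $Z_2$ are independent. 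The ``if'' directions (independence $\Rightarrow$ both expectations vanish) are immediate from I-consistency, already recorded in Examples~\ref{eg:hd} and \ref{eg:bkr}. The substantive content is nonnegativity plus the converse for $R$.

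For this I would invoke the classical representation of the Blum--Kiefer--Rosenblatt functional. Writing $F$, $G$ for the marginal c.d.f.s of $Z_1$, $Z_2$ and $H$ for their joint c.d.f., one has the well-known identity (see \citet{MR0125690}, and also \citet{MR0029139} for $D$ in the absolutely continuous case)
\begin{equation*}
\E h_R \;=\; c\int_{\R^2}\bigl\{H(z_1,z_2)-F(z_1)G(z_2)\bigr\}^2\,dF(z_1)\,dG(z_2)
\end{equation*}
for a positive absolute constant $c$. This is manifestly nonnegative, and it equals zero precisely when $H(z_1,z_2)=F(z_1)G(z_2)$ for $(F\otimes G)$-almost every $(z_1,z_2)$; a standard right-continuity/monotonicity argument then upgrades this to $H\equiv F\cdot G$ everywhere, i.e.\ to independence of $Z_1$ and $Z_2$. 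Here the continuity of the margins is used to ensure $dF$, $dG$ are non-atomic so that ``$(F\otimes G)$-a.e.'' has full support in the relevant sense. An analogous (or even simpler) argument gives $\E h_D\ge 0$ via its own quadratic-functional representation; for the equality characterization we do not even need $D$, since $R$ alone already forces independence and the two nonnegative terms $3\,\E h_D$ and $2\,\E h_R$ on the right-hand side can only vanish simultaneously.

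The main obstacle is establishing the quadratic representation of $\E h_R$ (and $\E h_D$) at the level of generality required — continuous but not necessarily absolutely continuous margins — and then carefully carrying out the ``a.e.\ equality $\Rightarrow$ everywhere equality $\Rightarrow$ independence'' step without inadvertently assuming a density. I would handle the first part by expanding the kernel $h_R$ as a sum of products of indicator increments and computing the resulting multiple integral, taking care that the combinatorial prefactor $1/32$ and the permutation sum reproduce exactly the stated integral; the continuity of margins guarantees ties occur with probability zero, so the rank-based kernel agrees almost surely with the indicator expression used in the computation. For the second part, I would fix a countable dense set of continuity points and use monotonicity of $H-FG$ in each coordinate together with right-continuity of c.d.f.s to propagate the a.e.\ identity to all of $\R^2$. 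With independence of $(Z_1,Z_2)$ established, I-consistency closes the loop, completing the proof of Theorem~\ref{thm:ya}.
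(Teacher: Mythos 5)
Your proposal follows essentially the same route as the paper: take expectations in the identity \eqref{eq:identity} (using that continuous margins rule out ties almost surely) to get $5\,\E h_{\tau^*}=3\,\E h_D+2\,\E h_R$, then conclude via nonnegativity of $\E h_D$ and $\E h_R$ together with the Blum--Kiefer--Rosenblatt characterization that $\E h_R=0$ iff the pair is independent. The only difference is that you propose to re-derive the quadratic representation of $\E h_R$ and the equality characterization, whereas the paper simply cites \citet{MR0029139} and \citet{MR0125690} for these facts (Proposition~\ref{prop:positive} and Example~\ref{eg:bkr}).
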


Similarly, a monotonicity property of $\E h_D$ and $\E h_R$
proved by \citet[Sec.~2]{zbMATH03366369} extends to $\E h_{\tau^*}$.  
We state the Gaussian version of this property.

\begin{theorem}
  \label{thm:increasing}
  If $\mZ=(Z_1,Z_2)^\top\in\R^2$ is bivariate Gaussian with (Pearson) correlation
  $\rho$, then $\E h_D$ and $\E h_R$ and, thus, also $\E h_{\tau^*}$ are
  increasing functions of $|\rho|$.
\end{theorem}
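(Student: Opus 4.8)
The plan is to first reduce the assertion about $\E h_{\tau^*}$ to the ones about $\E h_D$ and $\E h_R$, and then to establish the latter. Evaluate the pointwise identity \eqref{eq:identity} at $\mz_i=\mZ_i$ for i.i.d.\ copies $\mZ_1,\dots,\mZ_6$ of $\mZ$ and take expectations; since, for i.i.d.\ inputs, the average of a symmetric kernel over all subsets of a given size has the same mean as the kernel itself, this yields $3\,\E h_D+2\,\E h_R=5\,\E h_{\tau^*}$, i.e.
\[
  \E h_{\tau^*}=\tfrac{3}{5}\,\E h_D+\tfrac{2}{5}\,\E h_R .
\]
So $\E h_{\tau^*}$ is a convex combination of $\E h_D$ and $\E h_R$, and the theorem follows once we know that $\rho\mapsto\E h_D$ and $\rho\mapsto\E h_R$ are increasing in $|\rho|$, a property trivially inherited by convex combinations. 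This reduction, via \eqref{eq:identity}, is the only new ingredient; the monotonicity of $\E h_D$ and $\E h_R$ themselves is classical \citep[Sec.~2]{zbMATH03366369}, but I would reprove it as follows for completeness.

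By monotone invariance take $Z_1,Z_2\sim N(0,1)$, so $\mZ$ has standard Gaussian margins and correlation $\rho$; write $\Phi_\rho$ and $\phi_\rho$ for its joint CDF and density and $\Phi,\phi$ for the standard univariate ones. Blum--Kiefer--Rosenblatt's measure has the representation $\E h_R=c_R\int_{\R^2}\bigl(\Phi_\rho(x,y)-\Phi(x)\Phi(y)\bigr)^2\phi(x)\phi(y)\,dx\,dy$ with $c_R>0$. Since $\partial_\rho\Phi_\rho(x,y)=\phi_\rho(x,y)>0$ for every finite $(x,y)$ and $|\rho|<1$, the map $\rho\mapsto\Phi_\rho(x,y)$ is strictly increasing (the strict form of Slepian's inequality), hence $\bigl(\Phi_\rho(x,y)-\Phi(x)\Phi(y)\bigr)^2$ is strictly increasing in $|\rho|$ pointwise; integrating against the fixed positive measure $\phi(x)\phi(y)\,dx\,dy$, and using continuity in $\rho$ for the degenerate endpoints $|\rho|=1$, gives the claim for $\E h_R$.

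Hoeffding's measure has the analogous representation $\E h_D=c_D\int_{\R^2}\bigl(\Phi_\rho(x,y)-\Phi(x)\Phi(y)\bigr)^2\,d\Phi_\rho(x,y)$, $c_D>0$, but now the integrating measure $d\Phi_\rho$ itself varies with $\rho$ and is not monotone in $|\rho|$, so the Slepian argument breaks down; this is the main obstacle. I would resolve it via Mehler's formula: for $|\rho|<1$, $\phi_\rho(x,y)=\phi(x)\phi(y)\sum_{\ell\ge0}\frac{\rho^\ell}{\ell!}H_\ell(x)H_\ell(y)$ with $\{H_\ell\}$ the (probabilists') Hermite polynomials, and integrating this in $x$ and $y$ — using $\frac{d}{ds}\bigl(H_{k-1}(s)\phi(s)\bigr)=-H_k(s)\phi(s)$ — gives $\Phi_\rho(x,y)-\Phi(x)\Phi(y)=\phi(x)\phi(y)\sum_{k\ge1}\frac{\rho^k}{k!}H_{k-1}(x)H_{k-1}(y)$. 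Plugging both expansions into the representation of $\E h_D$, multiplying out, and integrating term by term (legitimate by Cram\'er's bound $|H_n(x)|\lesssim\sqrt{n!}\,e^{x^2/4}$, which dominates the integrand uniformly on compact subsets of $\{|\rho|<1\}$), every $x$-integral equals the corresponding $y$-integral, so
\[
  \E h_D=c_D\!\!\sum_{j,k\ge1,\ \ell\ge0}\frac{\rho^{\,j+k+\ell}}{j!\,k!\,\ell!}\,b_{jk\ell}^{\,2},\qquad b_{jk\ell}:=\int_{\R}\phi(x)^3H_{j-1}(x)H_{k-1}(x)H_\ell(x)\,dx\in\R ,
\]
and parity of the Hermite factors forces $b_{jk\ell}=0$ unless $j+k+\ell$ is even. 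Thus $\E h_D$ is a power series in $\rho^2$ with nonnegative coefficients (dropping the factor $\sum_\ell$ re-derives the same for $\E h_R$), hence nondecreasing in $|\rho|$; since, by D-consistency of $h_D$ and $h_R$ for the absolutely continuous bivariate Gaussian (Examples~\ref{eg:hd} and~\ref{eg:bkr}) together with I-consistency at $\rho=0$, the series vanishes only at $\rho=0$, its lowest nonzero coefficient yields strict monotonicity on $[0,1)$, extended to $|\rho|=1$ by continuity.

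In summary, the only non-routine point is the monotonicity of $\E h_D$, where the $\rho$-dependent weighting $d\Phi_\rho$ blocks a direct pointwise comparison; the Hermite/Mehler expansion circumvents this by rewriting $\E h_D$ as a power series in $\rho^2$ whose coefficients are sums of squares. The remaining steps — the reduction via \eqref{eq:identity}, the Slepian argument for $\E h_R$, the interchange of summation and integration, and continuity at $\rho=\pm1$ — are standard.
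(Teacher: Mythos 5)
Your argument is correct, and the part of it that the paper actually argues is the same as yours: taking expectations in the pointwise identity \eqref{eq:identity} for six i.i.d.\ copies of $\mZ$ (ties are null events under continuous margins, and each subset-averaged symmetric kernel has the same mean as the kernel itself) gives $5\,\E h_{\tau^*}=3\,\E h_D+2\,\E h_R$, so monotonicity in $|\rho|$ transfers from $\E h_D$ and $\E h_R$ to $\E h_{\tau^*}$. The difference is that the paper stops there and simply cites \citet[Sec.~2]{zbMATH03366369} for the monotonicity of $\E h_D$ and $\E h_R$, whereas you reprove that ingredient: the Plackett/Slepian argument for $\E h_R$ (fixed integrating measure $\phi(x)\phi(y)\,dx\,dy$, pointwise monotone integrand via $\partial_\rho\Phi_\rho=\phi_\rho>0$), and for $\E h_D$ the Mehler--Hermite expansion that converts $30\int(\Phi_\rho-\Phi\Phi)^2\,d\Phi_\rho$ into a power series in $\rho^2$ whose coefficients are sums of squares $b_{jk\ell}^2/(j!\,k!\,\ell!)$, with the parity argument correctly killing the odd powers; the termwise integration is indeed covered by Cram\'er's bound, and strictness follows either from D-consistency as you argue or directly from the computation in the proof of Lemma~\ref{lem:normal}, which gives $\partial^2\,\E h_D/\partial\rho^2=5/\pi^2>0$ at $\rho=0$. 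The only point stated without justification is continuity at $|\rho|=1$; since $h_D$, $h_R$, $h_{\tau^*}$ are bounded and their discontinuity sets are null under the comonotone (or countermonotone) limit law, weak convergence supplies this, so it is a genuinely minor gap. What your route buys is a self-contained proof that does not lean on Yanagimoto's Section~2 (the Hermite expansion for $\E h_D$ being the one non-routine addition, needed precisely because the integrating measure $d\Phi_\rho$ varies with $\rho$); what it costs is length relative to the paper's citation-plus-identity argument.
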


Theorem \ref{thm:ya} complements the results in Theorem 1 in
\citet{MR3178526} to include random vectors with continuous margins 
and a bivariate joint distribution that is continuous (implied by marginal continuity)
but need not be {absolutely} continuous. Such an example of
distribution on $\mathbb{R}^2$ that has continuous margins but is not
absolutely continuous has been constructed in Remark 1 in
\citet{zbMATH03366369}, where it is used to  illustrate an inconsistency problem about Hoeffding's $D$. A simpler example is the uniform distribution on the unit circle in $\mathbb{R}^2$. For this, we revisit a comment of \citet{MR3842884} who noted that based on existing literature ``it is not guaranteed that
  $\E h_{\tau^*} > 0$ when $(X,Y)^\top$ is generated uniformly on the unit
  circle in $\R^2$.''  We are able to calculate the values of $D$ and
  $R$ for this example and, thus, can deduce the value of $\tau^*$.

\begin{proposition}\label{prop:circ}
For $(X,Y)^\top$ following the uniform distribution on the unit circle in $\R^2$, we have
$\E h_D=\E h_R=\E h_{\tau^*}=1/16$.
\end{proposition}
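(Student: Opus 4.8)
The plan is to compute $\E h_D$ and $\E h_R$ exactly and then read off $\E h_{\tau^*}$ from the kernel identity \eqref{eq:identity}. Since $h_D$, $h_R$, and $h_{\tau^*}$ are rank-based, and hence invariant under strictly increasing transformations of each coordinate, I would first replace $(X,Y)$ by $(U,V):=(F_1(X),F_2(Y))$, where $F_1,F_2$ are the marginal distribution functions. Writing $X=\cos\Theta$ and $Y=\sin\Theta$ with $\Theta\sim\mathrm{Unif}[0,2\pi)$, one computes $F_1(x)=F_2(x)=1-\pi^{-1}\arccos x$ (the arcsine law), and following $\Theta$ through the four quarter-arcs shows that $U$ and $V$ are piecewise-linear functions of $\Theta$ and that $(U,V)$ is uniformly distributed, in arc length, on the diamond $\{(u,v):|u-\tfrac12|+|v-\tfrac12|=\tfrac12\}$. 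This reduces the problem to a single concrete piecewise-linear singular law, and immediately exhibits the symmetries $(U,V)\stackrel{\sf d}{=}(1-U,V)\stackrel{\sf d}{=}(U,1-V)\stackrel{\sf d}{=}(V,U)$, which will handle most of the subsequent bookkeeping.

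Next I would invoke the classical integral representations of the $D$ and $R$ functionals, $\E h_D=30\int(F-F_1F_2)^2\,dF$ and $\E h_R=90\int(F-F_1F_2)^2\,dF_1\,dF_2$, where $F$ denotes the joint distribution function; these are precisely the identities underlying Proposition~\ref{prop:future} and \citet[Sec.~3]{MR0029139}, \citet[p.~490]{MR0125690}. Since the present law is singular, only continuity of the margins is available, so I would justify these representations in this case either by repeating the elementary expansion of the kernels without the absolute-continuity hypothesis, or, more cheaply, by approximating the uniform law on the diamond by absolutely continuous laws and using that $\E h_D$ and $\E h_R$ are continuous functionals of the underlying distribution (the kernels being bounded). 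For $(U,V)$ on the diamond the joint distribution function $C(u,v)$ is piecewise linear — it equals the fraction of the perimeter lying in $(-\infty,u]\times(-\infty,v]$ — and on $[\tfrac12,1]^2$ one gets $C(u,v)=\tfrac12(u+v-\tfrac12)$ for $u+v\le\tfrac32$ and $C(u,v)=u+v-1$ for $u+v\ge\tfrac32$; hence $G(u,v):=C(u,v)-uv$ satisfies $G(u,v)=-(u-\tfrac12)(v-\tfrac12)$ on $\{u+v\le\tfrac32\}\cap[\tfrac12,1]^2$ and $G(u,v)=-(1-u)(1-v)$ on $\{u+v\ge\tfrac32\}\cap[\tfrac12,1]^2$, with the symmetries extending $G$ to $[0,1]^2$ up to sign.

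It then remains to evaluate two elementary integrals. For $\E h_R$ the fourfold reflection symmetry gives $\int_{[0,1]^2}G^2=4\int_{[\tfrac12,1]^2}G^2$, the two pieces on $[\tfrac12,1]^2$ contribute equally under $(u,v)\mapsto(1-u,1-v)$, and everything reduces to the Dirichlet/Beta integral $\iint_{a,b\ge0,\,a+b\le 1/2}a^2b^2\,da\,db=\tfrac1{11520}$; this gives $\int_{[0,1]^2}G^2=\tfrac1{1440}$ and $\E h_R=\tfrac1{16}$. For $\E h_D=30\,\E[G(U,V)^2]$ I would condition on which edge $(U,V)$ lies on; by symmetry the four edges are equivalent, and on the edge $U+V=\tfrac32$ (with $U\sim\mathrm{Unif}[\tfrac12,1]$) one has $G(U,V)=-(U-\tfrac12)(1-U)$, so $\E[G(U,V)^2]=2\int_{1/2}^{1}(u-\tfrac12)^2(1-u)^2\,du=\tfrac1{480}$ and $\E h_D=\tfrac1{16}$. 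Finally, taking expectations in the pointwise identity \eqref{eq:identity} — legitimate since continuous margins guarantee no ties almost surely, cf.\ \eqref{eq:identity2} — yields $5\,\E h_{\tau^*}=3\,\E h_D+2\,\E h_R=\tfrac5{16}$, so $\E h_{\tau^*}=\tfrac1{16}$.

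I expect the main obstacle to be the rigorous justification of the $D$ and $R$ integral representations for this singular distribution (and, to a lesser extent, pinning down $C(u,v)$, equivalently $G$, correctly on all of $[0,1]^2$); once $G$ is in hand the remaining computations are routine.
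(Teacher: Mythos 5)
Your proposal is correct and takes essentially the same route as the paper: pass to the copula of the circular uniform law (the uniform distribution on the diamond), evaluate $\E h_D=30\int\{C(u,v)-uv\}^2\,dC(u,v)$ along the four support segments and $\E h_R=90\int\{C(u,v)-uv\}^2\,du\,dv$ over the unit square, and then deduce $\E h_{\tau^*}=1/16$ from the identity \eqref{eq:identity}. The only differences are minor: you derive the copula directly from the arcsine margins rather than citing \citet{MR2197664}, you spell out the $R$ integral that the paper omits as standard, and the representation issue you flag for the singular joint law is handled in the paper simply by appealing to the arguments of \citet{MR619291} under continuous margins.
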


\bibliographystyle{apalike}
\bibliography{DHS_ams}

\begin{thebibliography}{}

\bibitem[Anderson, 2003]{MR1990662}
Anderson, T.~W. (2003).
\newblock {\em An introduction to multivariate statistical analysis {\em(3rd
  ed.)}}.
\newblock Wiley Series in Probability and Statistics. John Wiley and Sons,
  Inc., Hoboken, NJ.

\bibitem[Arcones and Gin\'{e}, 1993]{MR1235426}
Arcones, M.~A. and Gin\'{e}, E. (1993).
\newblock Limit theorems for {$U$}-processes.
\newblock {\em Ann. Probab.}, 21(3):1494--1542.

\bibitem[Arratia et~al., 1989]{MR972770}
Arratia, R., Goldstein, L., and Gordon, L. (1989).
\newblock Two moments suffice for {P}oisson approximations: the {C}hen-{S}tein
  method.
\newblock {\em Ann. Probab.}, 17(1):9--25.

\bibitem[Bai et~al., 2009]{MR2572444}
Bai, Z., Jiang, D., Yao, J.-F., and Zheng, S. (2009).
\newblock Corrections to {LRT} on large-dimensional covariance matrix by {RMT}.
\newblock {\em Ann. Statist.}, 37(6B):3822--3840.

\bibitem[Bao, 2019]{bao2017tracy}
Bao, Z. (2019).
\newblock Tracy--{W}idom limit for {K}endall's tau.
\newblock {\em Ann. Statist.}, 47(6):3504--3532.

\bibitem[Bao et~al., 2012]{MR2988403}
Bao, Z., Pan, G., and Zhou, W. (2012).
\newblock Tracy-{W}idom law for the extreme eigenvalues of sample correlation
  matrices.
\newblock {\em Electron. J. Probab.}, 17(88):1--32.

\bibitem[Bentkus and G\"{o}tze, 1997]{MR1481126}
Bentkus, V. and G\"{o}tze, F. (1997).
\newblock Uniform rates of convergence in the {CLT} for quadratic forms in
  multidimensional spaces.
\newblock {\em Probab. Theory Related Fields}, 109(3):367--416.

\bibitem[Bergsma and Dassios, 2014]{MR3178526}
Bergsma, W. and Dassios, A. (2014).
\newblock A consistent test of independence based on a sign covariance related
  to {K}endall's tau.
\newblock {\em Bernoulli}, 20(2):1006--1028.

\bibitem[Berrett and Samworth, 2019]{MR3992389}
Berrett, T.~B. and Samworth, R.~J. (2019).
\newblock Nonparametric independence testing via mutual information.
\newblock {\em Biometrika}, 106(3):547--566.

\bibitem[Blum et~al., 1961]{MR0125690}
Blum, J.~R., Kiefer, J., and Rosenblatt, M. (1961).
\newblock Distribution free tests of independence based on the sample
  distribution function.
\newblock {\em Ann. Math. Statist.}, 32(2):485--498.

\bibitem[Cai et~al., 2013]{MR3174618}
Cai, T., Liu, W., and Xia, Y. (2013).
\newblock Two-sample covariance matrix testing and support recovery in
  high-dimensional and sparse settings.
\newblock {\em J. Amer. Statist. Assoc.}, 108(501):265--277.

\bibitem[Cai and Jiang, 2011]{MR2850210}
Cai, T.~T. and Jiang, T. (2011).
\newblock Limiting laws of coherence of random matrices with applications to
  testing covariance structure and construction of compressed sensing matrices.
\newblock {\em Ann. Statist.}, 39(3):1496--1525.

\bibitem[Cai and Ma, 2013]{MR3160557}
Cai, T.~T. and Ma, Z. (2013).
\newblock Optimal hypothesis testing for high dimensional covariance matrices.
\newblock {\em Bernoulli}, 19(5B):2359--2388.

\bibitem[Chatterjee, 2019]{chatterjee2019new}
Chatterjee, S. (2019).
\newblock A new coefficient of correlation.
\newblock Available at
  \href{https://arxiv.org/abs/1909.10140}{arXiv:1909.10140}.

\bibitem[Gao et~al., 2017]{MR3641402}
Gao, J., Han, X., Pan, G., and Yang, Y. (2017).
\newblock High dimensional correlation matrices: the central limit theorem and
  its applications.
\newblock {\em J. R. Stat. Soc. Ser. B. Stat. Methodol.}, 79(3):677--693.

\bibitem[G\"{o}tze and Zaitsev, 2014]{MR3161488}
G\"{o}tze, F. and Zaitsev, A.~Y. (2014).
\newblock Explicit rates of approximation in the {CLT} for quadratic forms.
\newblock {\em Ann. Probab.}, 42(1):354--397.

\bibitem[Gretton et~al., 2008]{NIPS2007_3201}
Gretton, A., Fukumizu, K., Teo, C.~H., Song, L., Sch\"{o}lkopf, B., and Smola,
  A.~J. (2008).
\newblock A kernel statistical test of independence.
\newblock In Platt, J.~C., Koller, D., Singer, Y., and Roweis, S.~T., editors,
  {\em Advances in Neural Information Processing Systems 20}, pages 984--991.
  Curran Associates, Inc., Red Hook, NY.

\bibitem[Han et~al., 2017]{MR3737306}
Han, F., Chen, S., and Liu, H. (2017).
\newblock Distribution-free tests of independence in high dimensions.
\newblock {\em Biometrika}, 104(4):813--828.

\bibitem[Han et~al., 2018]{MR3757515}
Han, F., Xu, S., and Zhou, W.-X. (2018).
\newblock On {G}aussian comparison inequality and its application to spectral
  analysis of large random matrices.
\newblock {\em Bernoulli}, 24(3):1787--1833.

\bibitem[Hashorva et~al., 2015]{MR3377812}
Hashorva, E., Korshunov, D., and Piterbarg, V.~I. (2015).
\newblock Asymptotic expansion of {G}aussian chaos via probabilistic approach.
\newblock {\em Extremes}, 18(3):315--347.

\bibitem[Heller et~al., 2016]{MR3491123}
Heller, R., Heller, Y., Kaufman, S., Brill, B., and Gorfine, M. (2016).
\newblock Consistent distribution-free {$K$}-sample and independence tests for
  univariate random variables.
\newblock {\em J. Mach. Learn. Res.}, 17(29):1--54.

\bibitem[Heller and Heller, 2016]{heller2016computing}
Heller, Y. and Heller, R. (2016).
\newblock Computing the {B}ergsma {D}assios sign-covariance.
\newblock Available at
  \href{https://arxiv.org/abs/1605.08732}{arXiv:1605.08732}.

\bibitem[Hoeffding, 1948]{MR0029139}
Hoeffding, W. (1948).
\newblock A non-parametric test of independence.
\newblock {\em Ann. Math. Statist.}, 19(4):546--557.

\bibitem[Hollander et~al., 2014]{MR3221959}
Hollander, M., Wolfe, D.~A., and Chicken, E. (2014).
\newblock {\em Nonparametric statistical methods {\em (3rd ed.)}}.
\newblock Wiley Series in Probability and Statistics. John Wiley \& Sons, Inc.,
  Hoboken, NJ.

\bibitem[Huo and Sz\'{e}kely, 2016]{MR3556612}
Huo, X. and Sz\'{e}kely, G.~J. (2016).
\newblock Fast computing for distance covariance.
\newblock {\em Technometrics}, 58(4):435--447.

\bibitem[Jiang, 2004]{MR2052906}
Jiang, T. (2004).
\newblock The asymptotic distributions of the largest entries of sample
  correlation matrices.
\newblock {\em Ann. Appl. Probab.}, 14(2):865--880.

\bibitem[Jiang and Yang, 2013]{MR3127857}
Jiang, T. and Yang, F. (2013).
\newblock Central limit theorems for classical likelihood ratio tests for
  high-dimensional normal distributions.
\newblock {\em Ann. Statist.}, 41(4):2029--2074.

\bibitem[Kendall and Stuart, 1979]{kendall1979advanced}
Kendall, M. and Stuart, A. (1979).
\newblock {\em The Advanced Theory of Statistics. {V}ol. 2: Inference and
  Relationship {\em(4th ed.)}}.
\newblock Charles Griffin \& Co. Ltd., London.

\bibitem[Kinney and Atwal, 2014]{MR3200177}
Kinney, J.~B. and Atwal, G.~S. (2014).
\newblock Equitability, mutual information, and the maximal information
  coefficient.
\newblock {\em Proc. Natl. Acad. Sci. USA}, 111(9):3354--3359.

\bibitem[{Knight}, 1966]{zbMATH03224091}
{Knight}, W.~R. (1966).
\newblock {A computer method for calculating Kendall's tau with ungrouped
  data.}
\newblock {\em {J. Am. Stat. Assoc.}}, 61:436--439.

\bibitem[Kosorok, 2008]{MR2724368}
Kosorok, M.~R. (2008).
\newblock {\em Introduction to empirical processes and semiparametric
  inference}.
\newblock Springer Series in Statistics. Springer, New York.

\bibitem[Leung and Drton, 2018]{MR3766953}
Leung, D. and Drton, M. (2018).
\newblock Testing independence in high dimensions with sums of rank
  correlations.
\newblock {\em Ann. Statist.}, 46(1):280--307.

\bibitem[Muirhead, 1982]{MR652932}
Muirhead, R.~J. (1982).
\newblock {\em Aspects of multivariate statistical theory}.
\newblock Wiley Series in Probability and Mathematical Statistics. John Wiley
  \& Sons, Inc., New York.

\bibitem[Nagao, 1973]{MR0339405}
Nagao, H. (1973).
\newblock On some test criteria for covariance matrix.
\newblock {\em Ann. Statist.}, 1(4):700--709.

\bibitem[Nandy et~al., 2016]{MR3541972}
Nandy, P., Weihs, L., and Drton, M. (2016).
\newblock Large-sample theory for the {B}ergsma-{D}assios sign covariance.
\newblock {\em Electron. J. Stat.}, 10(2):2287--2311.

\bibitem[Nelsen, 2006]{MR2197664}
Nelsen, R.~B. (2006).
\newblock {\em An introduction to copulas {\em (2nd ed.)}}.
\newblock Springer Series in Statistics. Springer, New York.

\bibitem[Pfister et~al., 2018]{MR3744710}
Pfister, N., B\"{u}hlmann, P., Sch\"{o}lkopf, B., and Peters, J. (2018).
\newblock Kernel-based tests for joint independence.
\newblock {\em J. R. Stat. Soc. Ser. B. Stat. Methodol.}, 80(1):5--31.

\bibitem[Roy, 1957]{MR0092296}
Roy, S.~N. (1957).
\newblock {\em Some aspects of multivariate analysis}.
\newblock John Wiley and Sons Inc., New York; Indian Statistical Institute,
  Calcutta.

\bibitem[Schott, 2005]{MR2234197}
Schott, J.~R. (2005).
\newblock Testing for complete independence in high dimensions.
\newblock {\em Biometrika}, 92(4):951--956.

\bibitem[Schweizer and Wolff, 1981]{MR619291}
Schweizer, B. and Wolff, E.~F. (1981).
\newblock On nonparametric measures of dependence for random variables.
\newblock {\em Ann. Statist.}, 9(4):879--885.

\bibitem[Serfling, 1980]{MR595165}
Serfling, R.~J. (1980).
\newblock {\em Approximation theorems of mathematical statistics}.
\newblock Wiley Series in Probability and Mathematical Statistics. John Wiley
  \& Sons, Inc., New York.

\bibitem[Shao and Zhou, 2016]{MR3498022}
Shao, Q.-M. and Zhou, W.-X. (2016).
\newblock Cram\'{e}r type moderate deviation theorems for self-normalized
  processes.
\newblock {\em Bernoulli}, 22(4):2029--2079.

\bibitem[Sz\'{e}kely et~al., 2007]{MR2382665}
Sz\'{e}kely, G.~J., Rizzo, M.~L., and Bakirov, N.~K. (2007).
\newblock Measuring and testing dependence by correlation of distances.
\newblock {\em Ann. Statist.}, 35(6):2769--2794.

\bibitem[Vershynin, 2018]{MR3837109}
Vershynin, R. (2018).
\newblock {\em High-dimensional probability}, volume~47 of {\em Cambridge
  Series in Statistical and Probabilistic Mathematics}.
\newblock Cambridge University Press, Cambridge.

\bibitem[Weihs et~al., 2016]{MR3481807}
Weihs, L., Drton, M., and Leung, D. (2016).
\newblock Efficient computation of the {B}ergsma-{D}assios sign covariance.
\newblock {\em Comput. Statist.}, 31(1):315--328.

\bibitem[Weihs et~al., 2018]{MR3842884}
Weihs, L., Drton, M., and Meinshausen, N. (2018).
\newblock Symmetric rank covariances: a generalized framework for nonparametric
  measures of dependence.
\newblock {\em Biometrika}, 105(3):547--562.

\bibitem[{Yanagimoto}, 1970]{zbMATH03366369}
{Yanagimoto}, T. (1970).
\newblock {On measures of association and a related problem.}
\newblock {\em {Ann. Inst. Stat. Math.}}, 22(1):57--63.

\bibitem[Yao et~al., 2018]{MR3798874}
Yao, S., Zhang, X., and Shao, X. (2018).
\newblock Testing mutual independence in high dimension via distance
  covariance.
\newblock {\em J. R. Stat. Soc. Ser. B. Stat. Methodol.}, 80(3):455--480.

\bibitem[Za\u{\i}tsev, 1987]{MR876255}
Za\u{\i}tsev, A.~Y. (1987).
\newblock On the {G}aussian approximation of convolutions under
  multidimensional analogues of {S}. {N}. {B}ernstein's inequality conditions.
\newblock {\em Probab. Theory Related Fields}, 74(4):535--566.

\bibitem[Zhou, 2007]{MR2327033}
Zhou, W. (2007).
\newblock Asymptotic distribution of the largest off-diagonal entry of
  correlation matrices.
\newblock {\em Trans. Amer. Math. Soc.}, 359(11):5345--5363.

\bibitem[{Zolotarev}, 1962]{zbMATH03179069}
{Zolotarev}, V.~M. (1962).
\newblock {Concerning a certain probability problem.}
\newblock {\em {Theory Probab. Appl.}}, 6(2):201--204.

\end{thebibliography}

\newpage

\appendix

\section{Technical proofs}

We first introduce more notation. For $x\in\R$, let
$x_{+}$ denote the positive part of $x$, defined as $\max\{x,0\}$.
 For any vector $\mv\in\mathbb{R}^p$, we denote $\|\mv\|$ as its Euclidean norm.
We define the $L^{\infty}$ norm of a random variable as $\lVert X\rVert_{\infty}=\inf\{t\ge 0:|X|\le t\text{ a.s.}\}$, the $\psi_2$ (sub-gaussian) norm as $\lVert X\rVert_{\psi_2}=\inf\{t>0:\E\exp(X^2/t^2)\le 2\}$, and the $\psi_1$ (sub-exponential) norm as $\lVert X\rVert_{\psi_1}=\inf\{t>0:\E\exp(|X|/t)\le 2\}$.
For any measure $\Pr_{Z}$ and kernel $h$, we let
$H_n^{(\ell)}(\cdot;\Pr_{Z})$ be the U-statistic based on the
completely degenerate kernel $h^{(\ell)}(\cdot;\Pr_{Z})$ from~(\ref{eq:han-hl}):
\begin{equation}
H_n^{(\ell)}(\cdot;\Pr_{Z}):=\mbinom{n}{\ell}^{-1}\sum_{1\le i_1< i_2<\cdots< i_\ell\le n} h^{(\ell)}\Big(Z_{i_1},\ldots,Z_{i_\ell};\Pr_{Z}\Big).
\end{equation}

\subsection{Proofs for Section \ref{sec:tests} of the main paper}

\begin{proof}[Proof of Proposition \ref{lem:easy2}]
Since $Y_1,\ldots,Y_{d}$ are i.i.d. realizations of $\zeta$, we have
\begin{equation}\label{eqn:bor1}
\Pr\Big(\max_{j\in[d]} Y_j\le x\Big)=\{\Pr(\zeta\le x)\}^d=\{F_{\zeta}(x)\}^{d}=\{1-\overline F_{\zeta}(x)\}^{d},
\end{equation}
where
\begin{equation}
\overline F_\zeta(x):=\Pr(\zeta >x)=\frac{\kappa}{\Gamma(\mu_1/2)}\Big(\frac{x+\Lambda}{2\lambda_1}\Big)^{\mu_1/2-1}\exp\Big(-\frac{x+\Lambda}{2\lambda_1}\Big)\{1+o(1)\}
\end{equation}
for $x>-\Lambda$ as $x\to\infty$ by Equation (6) in \citet{zbMATH03179069}.
Take $x=4\lambda_1\log p +\lambda_1(\mu_1-2)\log\log
p-\Lambda+\lambda_1 y$.  Noticing that $x\to\infty$ as $p\to\infty$ and recalling $d=p(p-1)/2$, we obtain
\begin{align}\label{eqn:bor2}
d\cdot\overline F_\zeta(x)&=\frac{p(p-1)}{2}\frac{\kappa}{\Gamma(\mu_1/2)}\Big(\frac{x+\Lambda}{2\lambda_1}\Big)^{\mu_1/2-1}\exp\Big(-\frac{x+\Lambda}{2\lambda_1}\Big)\{1+o(1)\}\notag\\
&=\frac{p(p-1)}{2}\frac{\kappa}{\Gamma(\mu_1/2)}(2\log p)^{\mu_1/2-1}\exp\Big\{-2\log p -\Big(\frac{\mu_1}{2}-1\Big)\log\log p -\frac{y}{2}\Big\}\{1+o(1)\}\notag\\
&=\frac{2^{\mu_1/2-2}\kappa}{\Gamma(\mu_1/2)}\exp\Big(-\frac{y}{2}\Big)\{1+o(1)\}.
\end{align}
Combing \eqref{eqn:bor1} and \eqref{eqn:bor2}, we deduce that
\begin{equation*}
\Pr(\max_{j\in[d]} Y_j\le x)=\{1-\overline F_{\zeta}(x)\}^{d}\to\exp\Big\{-\lim_{d\to\infty}d\cdot\overline F_\zeta(x)\Big\}=\exp\Big\{-\frac{2^{\mu_1/2-2}\kappa}{\Gamma(\mu_1/2)}\exp\Big(-\frac{y}{2}\Big)\Big\},
\end{equation*}
which concludes the proof of  the lemma.
\end{proof}

\subsection{Proofs for Section \ref{sec:theory} of the main paper}

\subsubsection{Proof of Theorem \ref{thm:md}}

\begin{proof}[Proof of Theorem \ref{thm:md}]

We proceed in two steps,
proving first the case $m=2$ and then generalizing to 
$m\ge2$.  For notational convenience we introduce the constants $b_1:=\lVert h\rVert_\infty <\infty$ and $b_2:=\sup_v\lVert \phi_v\rVert_\infty <\infty$. 
\smallskip

{\bf Step I. }  Suppose $m=2$.
We start with the scenario that there are infinitely many nonzero
eigenvalues. For a large enough integer $K$ to be specified later, we
define the ``truncated'' kernel  of $h_2(z_1,z_2;\Pr_Z)$ as
$h_{2,K}(z_1,z_2;\Pr_Z)=\sum_{v=1}^{K}\lambda_v\phi_v(z_1)\phi_v(z_2)$,
with corresponding U-statistic
\begin{equation*}
\hat U_{K,n}:=\mbinom{n}{2}^{-1}\sum_{1\le i<j\le n}h_{2,K}(Z_i,Z_j;\Pr_Z).
\end{equation*}
For simpler presentation, define $Y_{v,i}=\phi_v(Z_i)$ for all $v=1,2,\ldots$ and $i\in[n]$. In view of the expansions of $h_{2,K}(\cdot)$ and $h_2(\cdot)$, $\hat U_{K,n}$ and $\hat U_n$ can be written as
\begin{align*}
\hat U_{K,n}&=\frac{1}{n-1}\Big\{\sum_{v=1}^K\lambda_v \Big(n^{-1/2}\sum_{i=1}^n Y_{v,i}\Big)^2-\sum_{v=1}^K\lambda_v \Big(\frac{\sum_{i=1}^n Y_{v,i}^2}{n}\Big)\Big\}\\
\text{and}~~~\hat U_n&=\frac{1}{n-1}\Big\{\sum_{v=1}^{\infty}\lambda_v \Big(n^{-1/2}\sum_{i=1}^n Y_{v,i}\Big)^2-\sum_{v=1}^{\infty}\lambda_v \Big(\frac{\sum_{i=1}^n Y_{v,i}^2}{n}\Big)\Big\}.
\end{align*}
We now quantify the approximation accuracy of $\hat U_{K,n}$ to $\hat U_n$. Using Slutsky's argument, we obtain, 
\begin{align}\label{eqn:slu1}
     & \Pr\Big\{(n-1)\hat U_n\ge x_n\Big\}=\Pr\Big\{\sum_{v=1}^{\infty}\lambda_v \Big(n^{-1/2}\sum_{i=1}^n Y_{v,i}\Big)^2-\sum_{v=1}^{\infty}\lambda_v \Big(\frac{\sum_{i=1}^n Y_{v,i}^2}{n}\Big)\ge x_n\Big\}\notag\\
\le\;& \Pr\Big\{\sum_{v=1}^{K}\lambda_v \Big(n^{-1/2}\sum_{i=1}^n Y_{v,i}\Big)^2-\sum_{v=1}^{K}\lambda_v \Big(\frac{\sum_{i=1}^n Y_{v,i}^2}{n}\Big)\ge x_n-\epsilon_1\Big\}
       +\Pr\Big\{\Big|(n-1)(\hat U_n-\hat U_{K,n})\Big|\ge \epsilon_1\Big\}\notag\\
\le\;& \Pr\Big\{\sum_{v=1}^{K}\lambda_v \Big(n^{-1/2}\sum_{i=1}^n Y_{v,i}\Big)^2-\sum_{v=1}^{K}\lambda_v \ge x_n-\epsilon_1-\epsilon_2\Big\}
       +\Pr\Big\{\Big|(n-1)(\hat U_n-\hat U_{K,n})\Big|\ge \epsilon_1\Big\}\notag\\
     & +\Pr\Big\{\Big|\sum_{v=1}^{K}\lambda_v\frac{\sum_{i=1}^n(Y_{v,i}^2-1)}{n}\Big|\ge\epsilon_2\Big\},
\end{align}
where $\epsilon_1,\epsilon_2$ are constants to be specified later.

The first term on the right-hand side of \eqref{eqn:slu1} may be
controlled using Za\u{\i}tsev's multivariate moderate deviation
theorem.  For this, we require a dimension-free bound on
$\sum_{v=1}^{K}u_v\lambda_v^{1/2}(n^{-1/2} Y_{v,i})$ for any
$\bm{u}\in\R^K$ satisfying $\lVert \bm{u}\rVert=1$.  Indeed, we have
\begin{align*}
\Big\lVert \sum_{v=1}^{K}u_v\lambda_v^{1/2}\frac{Y_{v,i}}{n^{1/2}}\Big\rVert_{\infty}
\le \sum_{v=1}^K|u_v|\lambda_v^{1/2}\frac{\lVert  Y_{v,i}\rVert_{\infty}}{n^{1/2}}
\le \Big(\sum_{v=1}^K u_v^2\Big)^{1/2}\Big(\sum_{v=1}^K \lambda_v\Big)^{1/2}n^{-1/2}b_2
\le n^{-1/2}\Lambda^{1/2} b_2.
\end{align*}
Thus all assumptions in Theorem 1.1 in \citet{MR876255} are satisfied with the $\tau$ in his Equation (1.5) chosen to be $n^{-1/2}\Lambda^{1/2} b_2$. We obtain the following bound:
\begin{align}\label{eqn:zai}
     & \Pr\Big\{\sum_{v=1}^{K}\lambda_v \Big(n^{-1/2}\sum_{i=1}^n Y_{v,i}\Big)^2-\sum_{v=1}^{K}\lambda_v \ge x_n-\epsilon_1-\epsilon_2\Big\}\notag\\
=  \;& \Pr\Big[\Big\{\sum_{v=1}^{K}\Big(\lambda_v^{1/2}\sum_{i=1}^nn^{-1/2} Y_{v,i}\Big)^2\Big\}^{1/2} \ge \Big(x_n-\epsilon_1-\epsilon_2+\sum_{v=1}^{K}\lambda_v\Big)_{+}^{1/2}\Big]\notag\\
\le\;& \Pr\Big[\Big\{\sum_{v=1}^{K}(\lambda_v^{1/2} \xi_v)^2\Big\}^{1/2} \ge \Big(x_n-\epsilon_1-\epsilon_2+\sum_{v=1}^{K}\lambda_v\Big)_{+}^{1/2}-\epsilon_3\Big]+c_1K^{5/2}\exp\Big\{-\frac{\epsilon_3}{c_2K^{5/2}(n^{-1/2}\Lambda^{1/2} b_2)}\Big\}\notag\\
=  \;& \Pr\Big[\sum_{v=1}^{K}\lambda_v \xi_v^2 \ge \Big\{\Big(x_n-\epsilon_1-\epsilon_2+\sum_{v=1}^{K}\lambda_v\Big)_{+}^{1/2}-\epsilon_3\Big\}_{+}^2\Big]+c_1K^{5/2}\exp\Big(-\frac{n^{1/2}\epsilon_3}{c_2\Lambda^{1/2} b_2K^{5/2}}\Big),
\end{align}
where $\epsilon_3$ is a constant to be specified later.
Combining \eqref{eqn:slu1} and \eqref{eqn:zai}, we find using
Slutsky's argument once again that
\begin{align}\label{eqn:slu2}
     & \Pr\Big\{\sum_{v=1}^{\infty}\lambda_v \Big(n^{-1/2}\sum_{i=1}^n Y_{v,i}\Big)^2-\sum_{v=1}^{\infty}\lambda_v \Big(\frac{\sum_{i=1}^n Y_{v,i}^2}{n}\Big)\ge x_n\Big\}\notag\\
\le\;& \Pr\Big[\sum_{v=1}^{\infty}\lambda_v (\xi_v^2-1) \ge \Big\{\Big(x_n-\epsilon_1-\epsilon_2+\sum_{v=1}^{K}\lambda_v\Big)_{+}^{1/2}-\epsilon_3\Big\}_{+}^2-\sum_{v=1}^{K}\lambda_v-\epsilon_4\Big]\notag\\
     & +\Pr\Big\{\Big|(n-1)(\hat U_n-\hat U_{K,n})\Big|\ge \epsilon_1\Big\}
       +\Pr\Big\{\Big|\sum_{v=1}^{K}\lambda_v\frac{\sum_{i=1}^n(Y_{v,i}^2-1)}{n}\Big|\ge\epsilon_2\Big\}\notag\\
     & +c_1K^{5/2}\exp\Big(-\frac{n^{1/2}\epsilon_3}{c_2\Lambda^{1/2} b_2K^{5/2}}\Big)
       +\Pr\Big\{\Big|\sum_{v=K+1}^{\infty}\lambda_v (\xi_v^2-1)\Big|\ge \epsilon_4\Big\},
\end{align}
where $\epsilon_4$ is another constant to be specified later.
In the following, we separately study the five terms on the right-hand
side of \eqref{eqn:slu2}, starting from the first term.

Let $\epsilon_n^*:=x_n-[\{(x_n-\epsilon_1-\epsilon_2+\sum_{v=1}^{K}\lambda_v)_{+}^{1/2}-\epsilon_3\}_{+}^2-\sum_{v=1}^{K}\lambda_v-\epsilon_4]$. 
Then
{
\begin{equation*}
\epsilon_n^*=\begin{cases}
\epsilon_1+\epsilon_2+2\epsilon_3(x_n-\epsilon_1-\epsilon_2+\sum_{v=1}^{K}\lambda_v)^{1/2}-\epsilon_3^2+\epsilon_4,
& \text{if }x_n+\sum_{v=1}^{K}\lambda_v\ge \epsilon_1+\epsilon_2+\epsilon_3^2,\\
x_n+\sum_{v=1}^{K}\lambda_v+\epsilon_4,
& \text{otherwise},
\end{cases}
\end{equation*}}
and
\begin{align}
\Pr\Big\{\sum_{v=1}^{\infty}\lambda_v (\xi_v^2-1) \ge x_n-\epsilon_n^{*}\Big\}\le \Pr\Big\{\sum_{v=1}^{\infty}\lambda_v (\xi_v^2-1) \ge x_n\Big\}+(\epsilon_n^*)_{+}\cdot\max_{x'\in [x_n-(\epsilon_n^*)_{+},x_n]} p_\zeta(x')\label{eqn:bndi}
\end{align}
where $p_{\zeta}(x)$ is the density of the random variable $\zeta:=\sum_{v=1}^{\infty}\lambda_v (\xi_v^2-1)$.

We turn to the second term in \eqref{eqn:slu2}.  Proposition 2.6.1
and Example 2.5.8 in \citet{MR3837109}
yield that 
\begin{equation*}
\Big\lVert n^{-1/2}\sum_{i=1}^n Y_{v,i}\Big\rVert_{\psi_2}^2
\le {\color{black} 8}n^{-1}\sum_{i=1}^n \Big\lVert Y_{v,i}\Big\rVert_{\psi_2}^2
\le {\color{black} 8(\log 2)^{-1}}b_2^2
\le {\color{black} 12}b_2^2.
\end{equation*}
Applying the triangle inequality and Lemma 2.7.6 in
\citet{MR3837109}, we deduce that
\begin{align*}
\Big\lVert \sum_{v=K+1}^{\infty}\lambda_v\Big(n^{-1/2}\sum_{i=1}^n Y_{v,i}\Big)^2\Big\rVert_{\psi_1} &\le \sum_{v=K+1}^{\infty}\lambda_v\Big\lVert \Big(n^{-1/2}\sum_{i=1}^n Y_{v,i}\Big)^2\Big\rVert_{\psi_1}\\
&=\sum_{v=K+1}^{\infty}\lambda_v
\Big\lVert n^{-1/2}\sum_{i=1}^n Y_{v,i}\Big\rVert_{\psi_2}^2\le {\color{black} 12}b_2^2\sum_{v=K+1}^{\infty}\lambda_v.
\end{align*}
Using Proposition 2.7.1 in \citet{MR3837109}, this is seen to further imply that, for any $\epsilon_1'>0$,
\begin{equation*}
\Pr\Big\{\sum_{v=K+1}^{\infty}\lambda_v\Big(n^{-1/2}\sum_{i=1}^n Y_{v,i}\Big)^2\ge \epsilon_1'\Big\}\le {\color{black} 2}\exp\Big(-\frac{\epsilon_1'}{{\color{black} 12}b_2^2\sum_{v=K+1}^{\infty}\lambda_v}\Big).
\end{equation*}
Noting that
$$\left|\sum_{v=K+1}^{\infty}\lambda_v\Big(n^{-1}\sum_{i=1}^n Y_{v,i}^2\Big)\right|\le
b_2^2\sum_{v=K+1}^{\infty}\lambda_v,$$
we obtain, for any $\epsilon_1>b_2^2\sum_{v=K+1}^\infty \lambda_v$,
\begin{align} 
     &\Pr\Big\{\Big|(n-1)(\hat U_n-\hat U_{K,n})\Big|\ge \epsilon_1\Big\}\notag\\
\le\;&
       \Pr\Big\{\Big|\sum_{v=K+1}^{\infty}\lambda_v\Big(n^{-1/2}\sum_{i=1}^n
       Y_{v,i}\Big)^2\Big|+\Big|\sum_{v=K+1}^{\infty}\lambda_v\Big(n^{-1}\sum_{i=1}^n
       Y_{v,i}^2\Big)\Big|\ge \epsilon_1\Big\}\notag\\
  \label{eqn:bnd1}
\le\;&\Pr\Big\{\Big|\sum_{v=K+1}^{\infty}\lambda_v\Big(n^{-1/2}\sum_{i=1}^n Y_{v,i}\Big)^2\Big|\ge \epsilon_1-b_2^2\sum_{v=K+1}^\infty \lambda_v\Big\}
\le     2e^{1/{\color{black} 12}}\exp\Big(-\frac{\epsilon_1}{{\color{black} 12}b_2^2\sum_{v=K+1}^{\infty}\lambda_v}\Big).
\end{align}

We next study the third term in \eqref{eqn:slu2}. Again, Proposition 2.6.1 and Example 2.5.8 in \citet{MR3837109} give
\begin{equation*}
\Big\lVert n^{-1}\sum_{i=1}^n(Y_{v,i}^2-1)\Big\rVert_{\psi_2}^2\le 8n^{-2}\sum_{i=1}^n\Big\lVert Y_{v,1}^2-1\Big\rVert_{\psi_2}^2\le {\color{black} 12}n^{-1}(b_2^2+1)^2,
\end{equation*}
which further yields
\begin{equation*}
\Big\lVert \sum_{v=1}^{K}\lambda_v\sum_{i=1}^n\frac{Y_{v,i}^2-1}{n}\Big\rVert_{\psi_2}\le \sum_{v=1}^{K}\lambda_v \Big\lVert n^{-1}\sum_{i=1}^n(Y_{v,i}^2-1)\Big\rVert_{\psi_2}\le {\color{black} 12}^{1/2}n^{-1/2}\Lambda(b_2^2+1).
\end{equation*}
Using Proposition 2.5.2 in \citet{MR3837109}, we have, for any $\epsilon_2>0$,
\begin{equation}\label{eqn:bnd2}
\Pr\bigg(\Big|\sum_{v=1}^{K}\lambda_v\sum_{i=1}^n\frac{Y_{v,i}^2-1}{n}\Big|\ge\epsilon_2\bigg)\le 2\exp\Big\{-\frac{n\epsilon_2^2}{{\color{black} 48}\Lambda^2(b_2^2+1)^2}\Big\}.
\end{equation}

The fourth term in \eqref{eqn:slu2} is explicit, and it remains to
bound the fifth and last term.  Since $\xi_v$ is a sub-gaussian random
variable, $\xi_v^2-1$ is sub-exponential. 
One readily verifies
$\lVert \xi_v^2-1\rVert_{\psi_1}\le {\color{black} 4}$, 
and accordingly
\begin{equation*}
\Big\lVert \sum_{v=K+1}^{\infty}\lambda_v (\xi_v^2-1)\Big\rVert_{\psi_1}
\le\sum_{v=K+1}^{\infty}\lambda_v\lVert \xi_v^2-1\rVert_{\psi_1}
\le {\color{black} 4}\sum_{v=K+1}^{\infty}\lambda_v.
\end{equation*}
By Proposition 2.7.1 in \citet{MR3837109}, this further
implies that, for any $\epsilon_4>0$,
\begin{equation}\label{eqn:bnd4}
\Pr\Big\{\Big|\sum_{v=K+1}^{\infty}\lambda_v (\xi_v^2-1)\Big|\ge \epsilon_4\Big\}
\le 2\exp\Big(-\frac{\epsilon_4}{{\color{black} 4}\sum_{v=K+1}^{\infty}\lambda_v}\Big).
\end{equation}

We now specify the integer $K$ to be $\lfloor n^{(1-3\theta)/5}\rfloor$. By the definition of $\theta$, 
there exists a positive absolute  constant $C_{\theta}$ such that $\sum_{v=K+1}^{\infty}\lambda_v\le C_{\theta}n^{-\theta}$ for all sufficiently large $n$. 
Combining this fact and inequalities \eqref{eqn:slu2}--\eqref{eqn:bnd4}, 
we obtain
\begin{align}\label{eqn:fin}
     & \frac{\Pr\Big\{(n-1)\hat U_n >x_n\Big\}}{\Pr\Big\{\sum_{v=1}^{\infty}\lambda_v(\xi_v^2-1) >x_n\Big\}}-1\notag\\
\le\;& \{\overline F_\zeta(x_n)\}^{-1}\Big[(\epsilon_n^*)_{+}\cdot\max_{x'\in [x_n-(\epsilon_n^*)_{+},x_n]}p_\zeta(x') 
       +2e^{1/{\color{black} 12}}\exp\Big(-\frac{\epsilon_1}{{\color{black} 12}b_2^2\sum_{v=K+1}^{\infty}\lambda_v}\Big)\notag\\
     & +2\exp\Big\{-\frac{n\epsilon_2^2}{{\color{black} 48}\Lambda^2(b_2^2+1)^2}\Big\}
       +c_1K^{5/2}\exp\Big(-\frac{n^{1/2}\epsilon_3}{c_2\Lambda^{1/2} b_2K^{5/2}}\Big)
       +2\exp\Big(-\frac{\epsilon_4}{{\color{black} 4}\sum_{v=K+1}^{\infty}\lambda_v}\Big)\Big]\notag\\
\le\;& \{\overline F_\zeta(x_n)\}^{-1}\Big[(\epsilon_n^*)_{+}\cdot\!\!\!\max_{x'\in [x_n-(\epsilon_n^*)_{+},x_n]}p_\zeta(x') 
       +2e^{1/{\color{black} 12}}\exp\Big(-\frac{\epsilon_1}{{\color{black} 12}b_2^2C_{\theta}n^{-\theta}}\Big)
       +2\exp\Big\{-\frac{n\epsilon_2^2}{{\color{black} 48}\Lambda^2(b_2^2+1)^2}\Big\}\notag\\
     & +c_1n^{(1-3\theta)/2}\exp\Big\{-\frac{n^{1/2}\epsilon_3}{c_2\Lambda^{1/2} b_2n^{(1-3\theta)/2}}\Big\}
       +2\exp\Big(-\frac{\epsilon_4}{{\color{black} 4}C_{\theta}n^{-\theta}}\Big)\Big],
\end{align}
which we shall prove to converge to $0$ uniformly on $[-\Lambda,e_n n^{\theta}]$. 
The starting point for proving this are Equations (5) and (6) in \citet{zbMATH03179069},
which yield that the density $p_\zeta(x)$ and the survival function
$\overline F_\zeta(x):=\Pr(\zeta >x)$ of
$\zeta=\sum_{v=1}^{\infty}\lambda_v (\xi_v^2-1)$ satisfy
\begin{align*}
p_\zeta(x)&=\frac{\kappa}{2\lambda_1\cdot\Gamma(\mu_1/2)}\Big(\frac{x+\Lambda}{2\lambda_1}\Big)^{\mu_1/2-1}\exp\Big(-\frac{x+\Lambda}{2\lambda_1}\Big)\{1+o(1)\}\\
~~~\text{and}~~~\overline F_\zeta(x)&=\frac{\kappa}{\Gamma(\mu_1/2)}\Big(\frac{x+\Lambda}{2\lambda_1}\Big)^{\mu_1/2-1}\exp\Big(-\frac{x+\Lambda}{2\lambda_1}\Big)\{1+o(1)\}
\end{align*}
for $x>-\Lambda$ tending to infinity.  
Here $\mu_1$ is the multiplicity of the largest eigenvalue $\lambda_1$ and $\kappa:=\prod_{v= \mu_1+1}^{\infty}(1-\lambda_v/\lambda_1)^{-1/2}$.

Consider the first term in \eqref{eqn:fin}.  We claim that there exists
an absolute constant $C_{\zeta}^*>0$ such that, for all
$0<\epsilon\leq \lambda_1/2$,
\begin{equation}\label{eqn:Mills}
\sup_{x\ge -\Lambda}\Big|\{\overline F_\zeta(x)\}^{-1}\cdot\max_{x'\in [x-\epsilon,x]}p_\zeta(x')\Big|\leq C_{\zeta}^*. 
\end{equation}
Indeed, we have ${p_\zeta(x)}/{\overline F_\zeta(x)}=(2\lambda_1)^{-1}\{1+o(1)\}$ { as $x\to\infty$}, and thus there exists { an absolute constant} $x_0>-\Lambda$ such that ${p_\zeta(x)}/{\overline F_\zeta(x)}\le \lambda_1^{-1}$ for all $x\ge x_0$.
Then for all $0<\epsilon\le \lambda_1/2$ and all $x\ge x_0+\epsilon$, 
\begin{equation*}
\frac{\max_{x'\in [x-\epsilon,x]} p_\zeta(x')}{\overline F_\zeta(x)}=\frac{p_\zeta(x-\epsilon')}{\overline F_\zeta(x)}\le\frac{p_\zeta(x-\epsilon')}{\overline F_\zeta(x-\epsilon') - \epsilon'\cdot p_\zeta(x-\epsilon')}=\frac{1}{{\overline F_\zeta(x-\epsilon')}/{p_\zeta(x-\epsilon')}-\epsilon'}\le \frac{2}{\lambda_1}
\end{equation*}
where $\epsilon'\in[0,\epsilon]$ is chosen
such that
$p_\zeta(x-\epsilon')=\max_{x'\in [x-\epsilon,x]} p_\zeta(x')$.  Now
\eqref{eqn:Mills} holds when taking
\[C_{\zeta}^*=\max\Big\{\frac{2}{\lambda_1},\{\overline F_\zeta(x_0+\lambda_1/2)\}^{-1}\cdot\max_{x'\in [-\Lambda,x_0+\lambda_1/2]}p_\zeta(x')\Big\}.\]

From \eqref{eqn:Mills}, to control the first term in \eqref{eqn:fin}, it remains to show that $(\epsilon_n^*)_{+}$ converges to $0$ uniformly on $[-\Lambda, e_n n^{\theta}]$ as $n\to\infty$.  
Choosing
\begin{equation}\label{eq:epsilons}
\epsilon_1={\color{black} 12}b_2^2C_{\theta}n^{-\theta}\Big(\frac{x_n+\Lambda}{2\lambda_1}+n^{\theta/2}\Big),~~~
\epsilon_2=n^{-\theta},~~~
\epsilon_3=n^{-\theta/2},~~~
\epsilon_4={\color{black} 4}C_{\theta}n^{-\theta}\Big(\frac{x_n+\Lambda}{2\lambda_1}+n^{\theta/2}\Big),
\end{equation}
we deduce that the first term in \eqref{eqn:fin} {converges uniformly to $0$ on $[-\Lambda,e_n n^{\theta}]$ as $n\to\infty$ by observing
that if $x_n+\sum_{v=1}^{K}\lambda_v\ge \epsilon_1+\epsilon_2+\epsilon_3^2$,
\begin{align*}
\epsilon_n^*&\le \epsilon_1+\epsilon_2+2\epsilon_3(x_n+\Lambda)^{1/2}-\epsilon_3^2+\epsilon_4\\
&\le\frac{6b_2^2C_{\theta}+2C_{\theta}}{\lambda_1}\Big(e_n+\frac{\Lambda}{n^{\theta}}\Big)+2\Big(e_n+\frac{\Lambda}{n^{\theta}}\Big)^{1/2}+({12b_2^2C_{\theta}+4C_{\theta}})n^{-\theta/2},
\end{align*}
and otherwise
\[
\epsilon_n^*\le \epsilon_1+\epsilon_2+\epsilon_3^2+\epsilon_4
\le\frac{6b_2^2C_{\theta}+2C_{\theta}}{\lambda_1}\Big(e_n+\frac{\Lambda}{n^{\theta}}\Big)+({12b_2^2C_{\theta}+4C_{\theta}})n^{-\theta/2}+2n^{-\theta}.
\]
Recall that we consider a positive sequence $\{e_n\}$ tending to $0$}.

We then further verify that the other four terms in \eqref{eqn:fin} {also converge to $0$ uniformly on $[-\Lambda,e_n n^{\theta}]$} as $n\to\infty$. 
There exists some absolute constant $c_{\zeta}^*>0$ such that for all $x\ge{2\lambda_1-\Lambda}$,
\begin{equation}\label{eq:survlow}
\overline F_\zeta(x)\ge c_{\zeta}^*\,\frac{\kappa}{\Gamma(\mu_1/2)}\Big(\frac{x+\Lambda}{2\lambda_1}\Big)^{\mu_1/2-1}\exp\Big(-\frac{x+\Lambda}{2\lambda_1}\Big).
\end{equation}
We then have, by noticing $\theta< 1/3$, for all $n$ large enough and all $x_n\in[2\lambda_1-\Lambda,e_n n^{\theta}]$,
\begin{align}
&\{\overline F_\zeta(x_n)\}^{-1}\exp\Big(-\frac{\epsilon_1}{{\color{black} 12}b_2^2C_{\theta}n^{-\theta}}\Big)
\le \frac{\Gamma(\mu_1/2)}{c_{\zeta}^*\kappa}\Big(\frac{e_nn^{\theta}+\Lambda}{2\lambda_1}\Big)^{1/2}\exp(-n^{\theta/2}),\notag\\
&\{\overline F_\zeta(x_n)\}^{-1}\exp\Big\{-\frac{n\epsilon_2^2}{{\color{black} 48}\Lambda^2(b_2^2+1)^2}\Big\}
\le \frac{\Gamma(\mu_1/2)}{c_{\zeta}^*\kappa}\Big(\frac{e_nn^{\theta}+\Lambda}{2\lambda_1}\Big)^{1/2}\exp(-C'n^{1/3}),\notag\\
&\{\overline F_\zeta(x_n)\}^{-1}n^{(1-3\theta)/2}\exp\Big\{-\frac{n^{1/2}\epsilon_3}{c_2\Lambda^{1/2}b_2n^{(1-3\theta)/2}}\Big\}
\le \frac{\Gamma(\mu_1/2)}{c_{\zeta}^*\kappa}\Big(\frac{e_nn^{\theta}+\Lambda}{2\lambda_1}\Big)^{1/2}n^{(1-3\theta)/2}\exp(-C''n^{\theta}),\notag\\
&\{\overline F_\zeta(x_n)\}^{-1}\exp\Big(-\frac{\epsilon_4}{{\color{black} 4}C_{\theta}n^{-\theta}}\Big)
\le \frac{\Gamma(\mu_1/2)}{c_{\zeta}^*\kappa}\Big(\frac{e_nn^{\theta}+\Lambda}{2\lambda_1}\Big)^{1/2}\exp(-n^{\theta/2}).\label{eq:foursign}
\end{align}
Here $C'$ and $C''$ are some absolute positive constants.
{
The inequalities in \eqref{eq:foursign} hold for all sufficiently large $n$ and all $x_n\in[-\Lambda,2\lambda_1-\Lambda]$ with replacing ${\Gamma(\mu_1/2)}/{c_{\zeta}^*\kappa}\cdot\{(e_nn^{\theta}+\Lambda)/(2\lambda_1)\}^{1/2}$ by $\{\overline F_\zeta(2\lambda_1-\Lambda)\}^{-1}$, which together with \eqref{eq:foursign} concludes the uniform convergence.}

If there are only finitely many nonzero eigenvalues, a simple
modification to \eqref{eqn:fin} gives
\begin{align}\label{eqn:fin2}
     & \frac{\Pr\Big\{(n-1)\hat U_n >x_n\Big\}}{\Pr\Big\{\sum_{v=1}^{\infty}\lambda_v(\xi_v^2-1) >x_n\Big\}}-1\le\frac{1}{\overline F_\zeta(x)}\Big[(\epsilon_n^*)_{+}\cdot\max_{x'\in [x_n-(\epsilon_n^*)_{+},x_n]} p_\zeta(x')\notag\\
     &\mkern150mu +2\exp\Big\{-\frac{n\epsilon_2^2}{{\color{black} 48}\Lambda^2(b_2^2+1)^2}\Big\}
       +c_1K^{5/2}\exp\Big(-\frac{n^{1/2}\epsilon_3}{c_2\Lambda^{1/2} b_2K^{5/2}}\Big)\Big],
\end{align}
where $\epsilon_n^*:=x_n-[\{(x_n-\epsilon_2+\Lambda)_{+}^{1/2}-\epsilon_3\}_{+}^2-\Lambda]$
and $K$ is the number of nonzero eigenvalues. Choosing $\epsilon_2=n^{-1/3}$, $\epsilon_3=n^{-1/6}$, one can obtain that the right-hand side of \eqref{eqn:fin2} converges uniformly to $0$ on $[-\Lambda,e_n n^{\theta}]$ as $n\to\infty$. 


We thus proved that
\[
\sup_{x_n\in[-\Lambda,e_n n^{\theta}]}\Bigg[\frac{\Pr\Big\{(n-1)\hat U_n
  >x_n\Big\}}{\Pr\Big\{\sum_{v=1}^{\infty}\lambda_v(\xi_v^2-1)
  >x_n\Big\}}-1\Bigg]\le o(1).
\]
{
For the lower bound, it can be shown similarly that if there are
infinitely many nonzero eigenvalues, then
\begin{align}\label{eqn:fin3}
     & \frac{\Pr\Big\{(n-1)\hat U_n >x_n\Big\}}{\Pr\Big\{\sum_{v=1}^{\infty}\lambda_v(\xi_v^2-1) >x_n\Big\}}-1\notag\\
\ge\;& \{\overline F_\zeta(x_n)\}^{-1}\Big[-(\epsilon_n^{**})_+\cdot\max_{x''\in [x_n,x_n+(\epsilon_n^{**})_+]}p_\zeta(x'') 
       -2e^{1/{12}}\exp\Big(-\frac{\epsilon_1}{{12}b_2^2C_{\theta}n^{-\theta}}\Big)\notag\\
     & -2\exp\Big\{-\frac{n\epsilon_2^2}{{48}\Lambda^2(b_2^2+1)^2}\Big\}
       -c_1n^{(1-3\theta)/2}\exp\Big\{-\frac{n^{1/2}\epsilon_3}{c_2\Lambda^{1/2} b_2n^{(1-3\theta)/2}}\Big\}
       -2\exp\Big(-\frac{\epsilon_4}{{4}C_{\theta}n^{-\theta}}\Big)\Big],
\end{align}
where
\begin{align*}
\epsilon_n^{**}
:=\;&\Big[\Big\{\Big(x_n+\epsilon_1+\epsilon_2+\sum_{v=1}^{K}\lambda_v\Big)_{+}^{1/2}+\epsilon_3\Big\}^2-\sum_{v=1}^{K}\lambda_v+\epsilon_4\Big]-x_n\\
=\;&\begin{cases}
\epsilon_1+\epsilon_2+2\epsilon_3(x_n+\epsilon_1+\epsilon_2+\sum_{v=1}^{K}\lambda_v)^{1/2}+\epsilon_3^2+\epsilon_4,
& \text{if }x_n+\sum_{v=1}^{K}\lambda_v\ge-\epsilon_1-\epsilon_2,\\
-x_n-\sum_{v=1}^{K}\lambda_v+\epsilon_3^2+\epsilon_4,
& \text{otherwise}.
\end{cases}
\end{align*}
We choose \eqref{eq:epsilons} as well.
To conclude the lower bound, it suffices to notice that there exists
an absolute constant $C_{\zeta}^{**}>0$ such that
\begin{equation*}
\sup_{x\ge -\Lambda}\Big|\{\overline F_\zeta(x)\}^{-1}\cdot\max_{x''\in [x,x+(\epsilon_n^{**})_{+}]}p_\zeta(x'')\Big|\leq C_{\zeta}^{**},
\end{equation*}
and $\epsilon_n^{**}$  converges uniformly to $0$ on $[-\Lambda,e_n n^{\theta}]$: if $x_n+\sum_{v=1}^{K}\lambda_v\ge-\epsilon_1-\epsilon_2$, then
\begin{align*}
0<\epsilon_n^{**}
&\le \frac{6b_2^2C_{\theta}+2C_{\theta}}{\lambda_1}\Big(e_n+\frac{\Lambda}{n^{\theta}}\Big)+2\Big(e_n+\frac{\Lambda+2}{n^{\theta}}\Big)^{1/2}+({12b_2^2C_{\theta}+4C_{\theta}})n^{-\theta/2}+2n^{-\theta}
\end{align*}
for all $n$ large enough, 
and otherwise
\begin{align*}
0<\epsilon_1+\epsilon_2+\epsilon_3^2+\epsilon_4\le \epsilon_n^{**}
&\le \sum\nolimits_{v>\lfloor n^{(1-3\theta)/5}\rfloor}\lambda_v+\frac{2C_{\theta}}{\lambda_1}\Big(e_n+\frac{\Lambda}{n^{\theta}}\Big)+{4C_{\theta}}n^{-\theta/2}+n^{-\theta}.
\end{align*}
If there are only finitely many nonzero eigenvalues, one can obtain
\begin{align}\label{eqn:fin4}
     & \frac{\Pr\Big\{(n-1)\hat U_n >x_n\Big\}}{\Pr\Big\{\sum_{v=1}^{\infty}\lambda_v(\xi_v^2-1) >x_n\Big\}}-1\ge\frac{1}{\overline F_\zeta(x)}\Big[-(\epsilon_n^{**})_+\cdot\max_{x''\in [x_n,x_n+(\epsilon_n^{**})_+]}p_\zeta(x'') 
\notag\\
     &\mkern150mu -2\exp\Big\{-\frac{n\epsilon_2^2}{{48}\Lambda^2(b_2^2+1)^2}\Big\}
       -c_1K^{5/2}\exp\Big(-\frac{n^{1/2}\epsilon_3}{c_2\Lambda^{1/2} b_2K^{5/2}}\Big)\Big],
\end{align}
where $\epsilon_n^{**}:=[\{(x_n+\epsilon_2+\Lambda)_{+}^{1/2}+\epsilon_3\}^2-\Lambda]-x_n$
and $K$ is the number of nonzero eigenvalues. Choosing $\epsilon_2=n^{-1/3}$, $\epsilon_3=n^{-1/6}$, one can verify that the right-hand side of \eqref{eqn:fin4} converges uniformly to $0$ on $[-\Lambda,e_n n^{\theta}]$ as $n\to\infty$.
This completes the proof of the case $m=2$.
}

\smallskip

{\bf Step II. }
We use the Hoeffding decomposition and the exponential inequality for
bounded completely degenerate U-statistics of \cite{MR1235426} to
prove the general case $m\ge2$.  Write
\begin{equation*}
\mbinom{m}{2}^{-1}(n-1)\hat U_n=(n-1)H_n^{(2)}(\cdot;\Pr_Z)+\sum_{\ell=3}^m\mbinom{m}{2}^{-1}\mbinom{m}{\ell}(n-1)H_n^{(\ell)}(\cdot;\Pr_Z).
\end{equation*}
Using Slutsky's argument, we have
\begin{align}\label{eqn:slu3}
     & \frac{\Pr\Big\{\binom{m}{2}^{-1}(n-1)\hat U_n>x_n\Big\}}{\Pr\Big\{\sum_{v=1}^{\infty}\lambda_v(\xi_v^2-1)>x_n\Big\}}\notag\\
\le\;& \frac{\Pr\Big\{(n-1)H_n^{(2)}(\cdot;\Pr_Z)>x_n-\epsilon_{n}^{\#}\Big\}}{\Pr\Big\{\sum_{v=1}^{\infty}\lambda_v(\xi_v^2-1)>x_n\Big\}}
+\sum_{\ell=3}^{m}\frac{\Pr\Big\{\binom{m}{2}^{-1}\binom{m}{\ell}(n-1)\cdot|H_n^{(\ell)}(\cdot;\Pr_Z)|\ge \epsilon_{n,\ell}^{\#}\Big\}}{\Pr\Big\{\sum_{v=1}^{\infty}\lambda_v(\xi_v^2-1)>x_n\Big\}},
\end{align}
where $\{\epsilon_{n,\ell}^{\#}, \ell=3,\ldots,m\}$ are constants to be specified later and $\epsilon_{n}^{\#}:=\sum_{\ell=3}^m \epsilon_{n,\ell}^{\#}$.

We analyze the first term and the remaining terms on the right-hand
side of \eqref{eqn:slu3} separately. To bound the latter, we employ Proposition 2.3(c) in \citet{MR1235426}, which states that there exist absolute positive constants $C_\ell'$ and $C_\ell''$ such that for all $\epsilon_5>0$,
\begin{equation}\label{eqn:AG}
\Pr(n^{\ell/2}|H_n^{(\ell)}(\cdot;\Pr_Z)|\ge \epsilon_5)\le C_\ell'\exp\{-C_\ell''(\epsilon_5/\lVert h^{(\ell)}(\cdot;\Pr_Z)\rVert_{\infty})^{2/\ell}\},
\end{equation}
where $\lVert h^{(\ell)}(\cdot;\Pr_Z)\rVert_\infty\le 2^\ell b_1$ can be shown by the alternative formula of $h^{(\ell)}(z_1,\ldots,z_{\ell}; \Pr_{Z})$ as below:
\[
h^{(\ell)}(z_1,\ldots,z_{\ell}; \Pr_{Z})
=h_{\ell}(z_1,\ldots,z_{\ell};\Pr_{Z})+\sum_{k=1}^{\ell-1}(-1)^{\ell-k}\sum_{1\leq i_1<\cdots<i_k\leq\ell}h_{k}(z_{i_1},\ldots,z_{i_k};\Pr_{Z})+(-1)^{\ell}\E h.
\]
Plugging \eqref{eqn:AG} into each term in the sum on the right of
\eqref{eqn:slu3} implies, for $n\ge2$,
\begin{align}\label{eqn:scm}
     &\sum_{\ell=3}^{m}\frac{\Pr\Big\{\binom{m}{2}^{-1}\binom{m}{\ell}(n-1)\cdot|H_n^{(\ell)}(\cdot;\Pr_Z)|\ge \epsilon_{n,\ell}^{\#}\Big\}}{\Pr\Big\{\sum_{v=1}^{\infty}\lambda_v(\xi_v^2-1)>x_n\Big\}}\notag\\
\le\;&\sum_{\ell=3}^{m}\{\overline F_{\zeta}(x_n)\}^{-1} C_\ell'\exp\Big[-C_\ell''\Big\{n^{\ell/2-1}\mbinom{m}{2}\mbinom{m}{\ell}^{-1}\epsilon_{n,\ell}^{\#}\Big/\lVert h^{(\ell)}(\cdot;\Pr_Z)\rVert_{\infty}\Big\}^{2/\ell}\Big]\notag\\
\le\;&\begin{cases}
{\displaystyle\sum_{\ell=3}^{m}\frac{C'_{\ell}}{c_{\zeta}^*}\Big\{\Big(\frac{x_n+\Lambda}{2\lambda_1}\Big)^{\mu_1/2-1}\exp\Big(-\frac{x_n+\Lambda}{2\lambda_1}\Big)\Big\}^{-1}\exp\Big[-C_\ell'' n^{\theta}\Big\{\mbinom{m}{2}\mbinom{m}{\ell}^{-1}\epsilon_{n,\ell}^{\#}\Big/(2^\ell b_1)\Big\}^{2/\ell}\Big],}\mkern-180mu\\[-.5em]
&\text{for }x_n\in[2\lambda_1-\Lambda,e_n n^{\theta}],\\[-.5em]
{\displaystyle\sum_{\ell=3}^{m}{C'_{\ell}}\{\overline F_{\zeta}(2\lambda_1-\Lambda)\}^{-1}\exp\Big[-C_\ell'' n^{\theta}\Big\{\mbinom{m}{2}\mbinom{m}{\ell}^{-1}\epsilon_{n,\ell}^{\#}\Big/(2^\ell b_1)\Big\}^{2/\ell}\Big],}
&{\text{for }x_n\in[-\Lambda,2\lambda_1-\Lambda],}\\[-1em]
\end{cases}
\end{align}\\[-1em]
where the last step is due to \eqref{eq:survlow} and the fact that $\theta\le1/3\le 1-2/\ell$ for $\ell\ge3$. Taking 
\begin{equation*}
\epsilon_{n,\ell}^{\#}=b_1\mbinom{m}{2}^{-1}\mbinom{m}{\ell}\Big\{\frac{4}{C_\ell''n^{\theta}}\Big(\frac{x_n+\Lambda}{2\lambda_1}+n^{\theta/2}\Big)\Big\}^{\ell/2},
\end{equation*}
the sum on the right-hand side of \eqref{eqn:scm} is seen to be
$o(1)$. It remains to control the first term in \eqref{eqn:slu3}.  We
start by writing the term as 
\begin{align}\label{eqn:pro}
    &\frac{\Pr\Big\{(n-1)H_n^{(2)} >x_n-\epsilon_{n}^{\#}\Big\}}{\Pr\Big\{\sum_{v=1}^{\infty}\lambda_v(\xi_v^2-1)>x_n\Big\}}\notag\\
=\; &\frac{\Pr\Big\{(n-1)H_n^{(2)} >x_n-\epsilon_{n}^{\#}\Big\}}{\Pr\Big\{\sum_{v=1}^{\infty}\lambda_v(\xi_v^2-1)>x_n-\epsilon_{n}^{\#}\Big\}}
\cdot\frac{\Pr\Big\{\sum_{v=1}^{\infty}\lambda_v(\xi_v^2-1)>x_n-\epsilon_{n}^{\#}\Big\}}{\Pr\Big\{\sum_{v=1}^{\infty}\lambda_v(\xi_v^2-1)>x_n\Big\}}.
\end{align}
The first factor in \eqref{eqn:pro} converges uniformly to $1$ on $[-\Lambda,e_n n^{\theta}]$ by going through the same proof in Step I {while noticing that 
although $x_n-\epsilon_{n}^{\#}$ is not necessarily greater than or equal to $-\Lambda$, 
it holds for all $x_n\in[-\Lambda,e_n n^{\theta}]$ that 
\begin{equation}\label{eq:smalleps}
0<\epsilon_{n}^{\#}
=\sum_{\ell=3}^m\epsilon_{n,\ell}^{\#}
\le \sum_{\ell=3}^m b_1\mbinom{m}{2}^{-1}\mbinom{m}{\ell}\Big\{\frac{2}{C''\lambda_1}\Big(e_n+\frac{\Lambda}{n^{\theta}}\Big)+\frac{4}{C''}n^{-\theta/2}\Big\}^{\ell/2}.
\end{equation}
}For the second term in \eqref{eqn:pro}, we have
\begin{equation*}
1\le\frac{\overline F_{\zeta}(x_n-\epsilon_{n}^{\#})}{\overline F_{\zeta}(x_n)}
\le 1+\frac{\epsilon_{n}^{\#}\cdot \max_{x'\in [x_n-\epsilon_{n}^{\#},x_n]} p_\zeta(x')}{\overline F_{\zeta}(x_n)}
\le 1+C_{\zeta}^* \cdot\epsilon_{n}^{\#}
\end{equation*}
for $x_n>0$ and $\epsilon_{n}^{\#}\le\lambda_1/2$ by \eqref{eqn:Mills}. By \eqref{eq:smalleps} again, we have the second
term in \eqref{eqn:pro} uniformly converges to $1$ as well. Therefore, we obtain the
right-hand side of \eqref{eqn:slu3} is uniformly converges to $1$ on $[-\Lambda,e_n n^{\theta}]$ as $n\to\infty$.  Consequently,
\[
\sup_{x_n\in[-\Lambda,e_n n^{\theta}]}\Bigg[\frac{\Pr\Big\{\binom{m}{2}^{-1}(n-1)\hat U_n >x_n\Big\}}{\Pr\Big\{\sum_{v=1}^{\infty}\lambda_v(\xi_v^2-1) >x_n\Big\}}-1\Bigg]\le o(1).
\]  
Again a similar derivation yields a corresponding lower bound of order
$o(1)$, completing the proof of the general case $m\ge2$.
\end{proof}

\subsubsection{Proof of Theorem \ref{thm:distr}}

\begin{proof}[Proof of Theorem \ref{thm:distr}]
  Since the marginal distributions are assumed continuous, we may assume, without
  loss of generality, that they are uniform distributions on
  $[0,1]$. Theorem \ref{thm:md} can then directly apply to the
  studied kernel $h(\cdot)$ in view of Assumption
  \ref{assumption:key}.

  The main tool in this proof is Theorem 1 in \citet{MR972770}.
  Specifically, we use the version presented in Lemma C2 in
  \citet{MR3737306}.  We let
  $I:=\{(j,k) : 1\le j<k\le p\}$, and for all $u:=(j,k)\in I$, we define
  $B_u=\{(\ell,v)\in I : \{\ell,v\}\cap\{j,k\}\neq\varnothing\}$ and
\begin{equation*}
\eta_u:=\eta_{jk}:=\mbinom{m}{2}^{-1}(n-1)\hat U_{jk}.
\end{equation*}
Then the theorem yields that
\begin{equation}\label{eqn:arratia}
\Big|\Pr\Big(\max_{u\in I}\eta_u\le t\Big)-\exp(-L_n)\Big|\le  A_1+A_2+A_3,
\end{equation}
where $L_n=\sum_{u\in I}\Pr(\eta_u>t)$,
\begin{align*}
A_1&=\sum_{u\in I}\sum_{\beta\in B_u}\Pr(\eta_u>t)\Pr(\eta_\beta>t),
~~~~~~
A_2 =\sum_{u\in I}\sum_{\beta\in B_u\backslash\{u\}}\Pr(\eta_u>t,\eta_\beta>t),\\
~~~\text{and}~~~
A_3&=\sum_{u\in I}\E|\Pr\{\eta_u>t\mid\sigma(\eta_\beta:\beta\not\in B_u)\}-\Pr(\eta_u>t)|.
\end{align*}
We now choose an appropriate value of $t$ such that $L_n$ tends to a constant independent of $p$ as $n\to\infty$. Let
\begin{equation}\label{eq:choiceoft}
t=4\lambda_1\log p +\lambda_1(\mu_1-2)\log\log p -\Lambda+\lambda_1 y \;\asymp\; 4\lambda_1\log p=o(n^{\theta}).
\end{equation}
By Theorem \ref{thm:md}, 
\begin{equation}\label{eqn:C1}
L_n=\frac{p(p-1)}{2}\Pr(\eta_{12}>t)=\frac{p(p-1)}{2}\overline F_{\zeta}(t)\{1+o(1)\}.
\end{equation}
Using Example 5 in \citet{MR3377812}, we have for any $t>-\Lambda$,
\begin{equation}\label{eqn:C2}
\overline F_\zeta(t)=\frac{\kappa}{\Gamma(\mu_1/2)}\Big(\frac{t+\Lambda}{2\lambda_1}\Big)^{\mu_1/2-1}\exp\Big(-\frac{t+\Lambda}{2\lambda_1}\Big)[1+O\{(\log p)^{-1}\}].
\end{equation}
Combining \eqref{eqn:C1} and \eqref{eqn:C2} implies
\begin{align}
L_n&=\frac{p(p-1)}{2}\frac{\kappa}{\Gamma(\mu_1/2)}\Big(\frac{t+\Lambda}{2\lambda_1}\Big)^{\mu_1/2-1}\exp\Big(-\frac{t+\Lambda}{2\lambda_1}\Big)\{1+o(1)\}\notag\\
&=\frac{p(p-1)}{2}\frac{\kappa}{\Gamma(\mu_1/2)}(2\log p)^{\mu_1/2-1}\exp\Big\{-2\log p -\Big(\frac{\mu_1}{2}-1\Big)\log\log p -\frac{y}{2}\Big\}\{1+o(1)\}\notag\\
&=\frac{2^{\mu_1/2-2}\kappa}{\Gamma(\mu_1/2)}\exp\Big(-\frac{y}{2}\Big)\{1+o(1)\},\label{eq:limin}
\end{align}
where $\kappa:=\prod_{v=\mu_1+1}(1-{\lambda_v}/{\lambda_1})^{-1/2}$.

Next we bound $A_1$, $A_2$, and $A_3$ separately.  We have
\[
  A_1=\tfrac{1}{2}p(p-1)(2p-3)\{\Pr(\eta_{12}>t)\}^2.
\]
Moreover, since Hoeffding's $D$ is a rank-based U-statistic, Proposition
\ref{prop:easy}\ref{prop:easy-2} yields that $\eta_u$ is independent of $\eta_\beta$
for all $u\in I,\beta\in B_u\backslash\{u\}$.   Hence,
\begin{equation*}
A_2=\sum_{u\in I}\sum_{\beta\in B_u\backslash\{u\}}\Pr(\eta_u>t)\Pr(\eta_\beta>t)=p(p-1)(p-2)\{\Pr(\eta_{12}>t)\}^2.
\end{equation*}
Again, by Proposition \ref{prop:easy}\ref{prop:easy-3}, we have $A_3=0$. Accordingly,
\begin{equation}\label{eqn:b123}
A_1+A_2+A_3\le 2p(p-1)^2\{\Pr(\eta_{12}>t)\}^2 = \frac{2(2 L_n)^2}{p}=O\Big(\frac{1}{p}\Big).
\end{equation}
Let $L=\cdot{2^{\mu_1/2-2}\kappa}/{\Gamma(\mu_1/2)}\cdot\exp(-{y}/{2})$. Plugging \eqref{eq:choiceoft}, \eqref{eq:limin}, \eqref{eqn:b123} into \eqref{eqn:arratia} yields
\begin{align*}
&\Big|\Pr\Big\{\mbinom{m}{2}^{-1}(n-1)\max_{j<k}\hat U_{jk}-4\lambda_1\log p-\lambda_1(\mu_1-2)\log\log p+\Lambda\le \lambda_1 y\Big\}-\exp(-L)\Big|\notag\\
\le\;&\Big|\Pr\Big(\max_{u\in I}\eta_\alpha\le t\Big)-\exp(-L_n)\Big|+\Big|\exp(-L_n)-\exp(-L)\Big|=o(1).
\end{align*}
This completes the proof.
\end{proof}


\subsubsection{Proof of Corollary \ref{crl:size_eg}}

\begin{proof}[Proof of Corollary \ref{crl:size_eg}]
  We only give the proof for Hoeffding's $D$ here.  The proofs for the
  other two tests are very similar and hence omitted. As in the proof of
  Theorem \ref{thm:distr}, we may assume the margins to be uniformly
  distributed on $[0,1]$ without loss of generality. To employ Theorem
  \ref{thm:distr}, we only need to compute $\theta$. We claim that
\begin{equation}\label{eqn:asy}
\sum_{v=K+1}^{\infty}\lambda_v\asymp \frac{(\log K)^2}{K}.
\end{equation}
If this claim is true, then by the definition of $\theta$, one obtains $\theta=1/8-\delta$, where $\delta$ is an arbitrarily small pre-specified positive absolute constant. 

We now prove \eqref{eqn:asy}. Notice that the $K$ largest
eigenvalues are corresponding to the $K$ smallest products $ij$,
$i,j\in\Z^+$.  We begin by assuming that there exists an integer $M$ such that the number of pairs $(i,j)$ satisfying 
$ij\le M$ is exactly $K$: 
\begin{equation}\label{eqn:squarepair}
2\sum_{i=1}^{\lfloor M^{1/2}\rfloor}\lfloor {M}/{i}\rfloor  -\lfloor M^{1/2}\rfloor^2=K. 
\end{equation}
To analyze $\sum_{v=K+1}^{\infty}\lambda_{v}$, we first quantify $M$.
An upper bound on $\sum_{i=1}^{\lfloor M^{1/2}\rfloor}\lfloor {M}/{i}\rfloor$ is
\begin{align*}
     \sum_{i=1}^{\lfloor M^{1/2}\rfloor}\Big\lfloor \frac{M}{i}\Big\rfloor
\le  \sum_{i=1}^{\lfloor M^{1/2}\rfloor} \frac{M}{i}
=    M\sum_{i=1}^{\lfloor M^{1/2}\rfloor} \frac{1}{i}
\le  M\Big(\log\lfloor M^{1/2}\rfloor+1\Big)
\le  M\Big(\frac12\log M+1\Big),
\end{align*}
and a lower bound is
\begin{align*}
     \sum_{i=1}^{\lfloor M^{1/2}\rfloor}\Big\lfloor \frac{M}{i}\Big\rfloor
\ge\; &\sum_{i=1}^{\lfloor M^{1/2}\rfloor} \Big(\frac{M}{i}-1\Big)
=    M\sum_{i=1}^{\lfloor M^{1/2}\rfloor} \frac{1}{i} - \lfloor M^{1/2}\rfloor\\
\ge\; &M\log\lfloor M^{1/2}\rfloor - \lfloor M^{1/2}\rfloor
\ge  M\log(M^{1/2}-1) - M^{1/2}.
\end{align*}
Thus we have $M\log M \asymp K$, which implies that $M\asymp K/\log K$. Then we obtain
\begin{align*}
\sum_{v=K+1}^{\infty}\lambda_v
\asymp\; &\sum_{i=1}^{\lfloor M^{1/2}\rfloor}\sum_{j=\lfloor {M}/{i}\rfloor+1}^{\infty}\frac{1}{i^2j^2}
+\sum_{j=1}^{\lfloor M^{1/2}\rfloor}\sum_{i=\lfloor {M}/{j}\rfloor+1}^{\infty}\frac{1}{i^2j^2}
+\sum_{i=\lfloor M^{1/2}\rfloor+1}^{\infty}\;\sum_{j=\lfloor M^{1/2}\rfloor+1}^{\infty}\frac{1}{i^2j^2}\\
\asymp\; &\sum_{i=1}^{\lfloor M^{1/2}\rfloor}\frac{1}{i^2(M/i)}
+\sum_{j=1}^{\lfloor M^{1/2}\rfloor}\frac{1}{(M/j)j^2}
+\frac{1}{(M^{1/2})(M^{1/2})}\\
\asymp\; &2\Big\{\frac{\log(M^{1/2})}{M}\Big\}+\frac{1}{M}\asymp\frac{(\log K)^2}{K}.
\end{align*}

If there is no integer $M$ such that \eqref{eqn:squarepair} holds, then we pick the largest integer $M_1$ and the smallest integer $M_2$ such that
\[
2\sum_{i=1}^{\lfloor M_1^{1/2}\rfloor}\Big\lfloor \frac{M_1}{i}\Big\rfloor  -\lfloor M_1^{1/2}\rfloor^2< K< 2\sum_{i=1}^{\lfloor M_2^{1/2}\rfloor}\Big\lfloor \frac{M_2}{i}\Big\rfloor  -\lfloor M_2^{1/2}\rfloor^2,
\]
and let $K_1$ and $K_2$ denote the left-hand side and the right-hand side, respectively. One can verify that $K_1>K/2$ and $K_2<2K$ for all sufficiently large $K$. Then we have
\begin{align*}
&\sum_{v=K+1}^{\infty}\lambda_v\le\sum_{v=K_1+1}^{\infty}\lambda_v \asymp\frac{(\log K_1)^2}{K_1}\le\frac{2(\log K)^2}{K}\\
\text{and}~~~&\sum_{v=K+1}^{\infty}\lambda_v\ge\sum_{v=K_2+1}^{\infty}\lambda_v \asymp\frac{(\log K_2)^2}{K_2}\ge\frac{(\log K)^2}{2K}.
\end{align*}
Therefore, the asymptotic result for $\sum_{v=K+1}^{\infty}\lambda_{v}$ given in \eqref{eqn:asy} still holds. 
\end{proof}

\subsubsection{Proof of Lemma \ref{lem:normal}}

\begin{proof}[Proof of Lemma \ref{lem:normal}]
  Again we only prove the claim for Hoeffding's $D$;
  Blum--Kiefer--Rosenblatt's $R$ and Bergsma--Dassios--Yanagimoto's
  $\tau^*$ can be treated similarly.
We first establish the fact that $D_{jk}\asymp \Sigma_{jk}^2$ as $\Sigma_{jk}\to 0$.
 Let $\{(X_{ij},X_{ik})^{\top}:i\in[5]\}$ be a collection of independent and identically distributed random vectors that follow a bivariate normal distribution with mean $(0,0)^{\top}$ and covariance matrix
\begin{equation*}
\bigg[\begin{matrix}1 & \Sigma_{jk}\\ \Sigma_{jk} & 1\end{matrix}\bigg].
\end{equation*}
We have 
\begin{equation*}
D_{jk}=\E_{jk}h_D=\int h_D(x_{1j},x_{1k},\dots,x_{5j},x_{5k})\phi(x_{1j},x_{1k},\dots,x_{5j},x_{5k};\Sigma_{jk})\prod_{i=1}^5 dx_{ij}\prod_{i=1}^5 dx_{ik},
\end{equation*}
where
\begin{equation*}
\phi(x_{1j},x_{1k},\dots,x_{5j},x_{5k};\Sigma_{jk})=\prod_{i=1}^5 \phi(x_{ij},x_{ik};\Sigma_{jk}),
\end{equation*}
and
\begin{equation*}
\phi(x_{ij},x_{ik};\Sigma_{jk})=\frac{1}{2\pi(1-\Sigma_{jk}^2)^{1/2}}\exp \Big\{-\frac{x_{ij}^2+x_{ik}^2-2\Sigma_{jk}x_{ij}x_{ik}}{2(1-\Sigma_{jk}^{2})}\Big\}
\end{equation*}
is the joint density of $(X_{ij},X_{ik})^{\top}$. Notice that $D_{jk}$ is smooth with respect to $\Sigma_{jk}$:
\begin{equation*}
\frac{\partial^{s}D_{jk}}{\partial\Sigma_{jk}^s}=\int h_D(x_{1j},x_{1k},\dots,x_{5j},x_{5k})\frac{\partial^s\phi(x_{1j},x_{1k},\dots,x_{5j},x_{5k};\Sigma_{jk})}{\partial\Sigma_{jk}^s}\prod_{i=1}^5 dx_{ij}\prod_{i=1}^5 dx_{ik}.
\end{equation*}
In order to prove $D_{jk}\asymp\Sigma_{jk}^2$, it suffices to establish that $D_{jk}=0$ when $\Sigma_{jk}=0$, the first derivative of $D_{jk}$ with respect to $\Sigma_{jk}$ is $0$ at $\Sigma_{jk}=0$, and the second derivative of $D_{jk}$ with respect to $\Sigma_{jk}$ is $5/\pi^2$ 
at $\Sigma_{jk}=0$, which can be confirmed by a lengthy but straightforward computation.

Now we turn to our claim.
Recall that $\Var_{jk}\{h_D^{(1)}(\cdot;\Pr_{jk})\}=0$ when $\Sigma_{jk}=0$. We will show that the first-order term in the Taylor series of $\Var_{jk}\{h_D^{(1)}(\cdot;\Pr_{jk})\}$ with respect to $\Sigma_{jk}$ is also $0$. Suppose, for contradiction, the first-order coefficient (denoted by $a_1$) in the Taylor series of $\Var_{jk}\{h_D^{(1)}(\cdot;\Pr_{jk})\}$ is not $0$, then for $\Sigma_{jk}$ in a sufficiently small neighborhood of $0$, $\Var_{jk}\{h_D^{(1)}(\cdot;\Pr_{jk})\}<0$ for $\Sigma_{jk}<0$ if $a_1>0$, and for $\Sigma_{jk}>0$ if $a_1<0$, which contradicts the definition of $\Var_{jk}\{h_D^{(1)}(\cdot;\Pr_{jk})\}$. This together with $\E_{jk}h_D\asymp\Sigma_{jk}^2$ completes the proof.
\end{proof}

\subsubsection{Proof of Theorem \ref{thm:power}}

\begin{proof}[Proof of Theorem \ref{thm:power}]
It is clear that we only have to consider $\max_{j<k} U_{jk}=C_\gamma(\log p/n)$ for some sufficiently large $C_\gamma$. The main idea here is to bound $\max_{j<k} \hat U_{jk}-\max_{j<k} U_{jk}$ with high probability. To do this, we first construct a concentration inequality for $|\hat U_{jk}-U_{jk}|$.
The Hoeffding decomposition of the difference is
\begin{equation}\label{eqn:pow-H}
\hat U_{jk}-U_{jk}=\frac{m}n\sum_{i=1}^n h^{(1)}(\mX_{i,\{j,k\}};\Pr_{jk})+\sum_{\ell=2}^{m}\mbinom{m}{\ell}H_n^{(\ell)}(\cdot;\Pr_{jk}).
\end{equation}
For controlling the first term in \eqref{eqn:pow-H}, recall that $\lVert h\rVert_\infty\le b_1<\infty$, and then $h^{(1)}(\cdot;\Pr_{jk})=h_1(\cdot;\Pr_{jk})-\E h$ is bounded by $2b_1$ almost surely and $\E h^{(1)}(\cdot;\Pr_{jk})=0$. We then apply Bernstein's inequality, giving
\begin{equation}\label{eqn:pow-Bern}
\Pr \Big\{\frac{m}{n}\Big|\sum _{i=1}^{n}h^{(1)}(\cdot;\Pr_{jk})\Big|>t_1\Big\}\leq 
2\exp \Big(-\frac {n(t_1/m)^{2}}{2[\Var_{jk}\{h^{(1)}(\cdot;\Pr_{jk})\}+{2b_1(t_1/m)}/{3}]}\Big).
\end{equation}
By the definition of the distribution family $\mathcal{D}(\gamma,p;h)$, we have 
\[
\Var_{jk}\{h^{(1)}(\cdot;\Pr_{jk})\}\le \gamma \E_{jk}h=\gamma U_{jk}\le \gamma C_\gamma(\log p/n). 
\]
Plugging this into \eqref{eqn:pow-Bern} and taking $t_1=C_1(\log p/n)$, where $C_1$ is a constant to be specified later, yields 
\begin{align}\label{eqn:piece1}
      & \Pr \Big\{\frac{m}{n}\Big|\sum _{i=1}^{n}h^{(1)}(\cdot;\Pr_{jk})\Big|>C_1\frac{\log p}{n}\Big\}\notag\\
\le\; & 2\exp \Big\{-\frac {C_1^2\log p}{2(m^2\gamma C_\gamma+2m{b_1C_1}/{3})}\Big\}=2\Big(\frac{1}{p}\Big)^{{C_1^2}/(2m^2\gamma C_\gamma+ 4m{b_1C_1}/{3})}.
\end{align}
We then handle the remaining term.  By Proposition 2.3(c) in \citet{MR1235426}, there exist absolute constants $C_\ell',C_\ell''>0$ such that for all $t\in(0,1]$, $2\le\ell\le m$,
\begin{align*}
      &\Pr(|H_n^{(\ell)}(\cdot;\Pr_{jk})|\ge t)\le C_\ell'\exp\Big\{-C_{\ell}''n\Big(\frac{t}{\lVert h^{(\ell)}(\cdot;\P_{jk})\rVert_{\infty}}\Big)^{2/\ell}\Big\}\\
\le\; &C_\ell'\exp\Big\{-C_{\ell}''n\Big(\frac{t}{2^{\ell}b_1}\Big)^{2/\ell}\Big\}\le C_\ell'\exp\Big(-\frac{C_\ell''nt}{4b_1^{2/\ell}}\Big),
\end{align*}
which further implies that
\begin{align*}
\Pr\Big\{\Big|\sum_{\ell=2}^k\mbinom{m}{\ell}H_n^{(\ell)}(\cdot;\Pr_{jk})\Big|\ge t_2\Big\}
&\le \sum_{\ell=2}^{m}\Pr\Big\{\mbinom{m}{\ell}|H_n^{(\ell)}(\cdot;\Pr_{jk})|\ge t_2\frac{4b_1^{2/\ell}\binom{m}{\ell}/C_\ell''}{\sum_{\ell=2}^{m}4b_1^{2/\ell}\binom{m}{\ell}/C_\ell''}\Big\}\\
&\le\Big(\sum_{\ell=2}^{m}C_\ell'\Big)\exp\Big\{-\frac{nt_2}{\sum_{\ell=2}^{m}4b_1^{2/\ell}\binom{m}{\ell}/C_\ell''}\Big\}.
\end{align*}
Taking $t_2=C_2(\log p/n)$, where $C_2$ is another constant to be specified later, we have
\begin{equation}\label{eqn:piece2}
\Pr\Big\{\Big|\sum_{\ell=2}^k\mbinom{m}{\ell}H_n^{(\ell)}(\cdot;\Pr_{jk})\Big|\ge C_2(\log p/n)\Big\}\le \Big(\sum_{\ell=2}^{m}C_\ell'\Big)\Big(\frac1p\Big)^{C_2/\{\sum_{\ell=2}^{m}4b_1^{2/\ell}\binom{m}{\ell}/C_\ell''\}}.
\end{equation}
Putting \eqref{eqn:piece1} and \eqref{eqn:piece2} together, and choosing 
\[C_1=2mb_1+m(4b_1^2+6\gamma C_\gamma)^{1/2}
~~~\text{and}~~~
C_2=12\sum_{\ell=2}^m \frac{b_1^{2/\ell}\binom{m}{\ell}}{C_\ell''},
\]
 we deduce
\begin{equation*}
\Pr\Big[|\hat U_{jk}-U_{jk}|\ge \Big\{2mb_1+m(4b_1^2+6\gamma C_\gamma)^{1/2}+12\sum_{\ell=2}^m \frac{b_1^{2/\ell}\binom{m}{\ell}}{C_\ell''}\Big\}\frac{\log p}{n}\Big]
\le \Big(2+\sum_{\ell=2}^{m}C_\ell'\Big)\frac1{p^3}.
\end{equation*}
Then using Slutsky's argument gives
\begin{equation*}
\Pr\Big[\max_{j<k}|\hat U_{jk}-U_{jk}|\ge \Big\{2mb_1+m(4b_1^2+6\gamma C_\gamma)^{1/2}+12\sum_{\ell=2}^m \frac{b_1^{2/\ell}\binom{m}{\ell}}{C_\ell''}\Big\}\frac{\log p}{n}\Big]
\le \frac{2+\sum_{\ell=2}^{m}C_\ell'}{2}\cdot\frac1{p},
\end{equation*}
which implies that, with probability at least $1-(1+\sum_{\ell=2}^{m}C_\ell'/2)p^{-1}$,
\begin{equation*}
\max_{j<k}|\hat U_{jk}-U_{jk}|\le \Big\{2mb_1+m(4b_1^2+6\gamma C_\gamma)^{1/2}+12\sum_{\ell=2}^m \frac{b_1^{2/\ell}\binom{m}{\ell}}{C_\ell''}\Big\}\frac{\log p}{n}.
\end{equation*}
Hence for $n\ge 2$, we have with probability no smaller than $1-(1+\sum_{\ell=2}^{m}C_\ell'/2)p^{-1}$,
\begin{align*}
      &\max_{j<k}\hat U_{jk}\ge\max_{j<k} U_{jk}-\max_{j<k}|\hat U_{jk}-U_{jk}|\\
\ge\; &\Big\{C_\gamma-2mb_1-m(4b_1^2+6\gamma C_\gamma)^{1/2}-12\sum_{\ell=2}^m \frac{b_1^{2/\ell}\binom{m}{\ell}}{C_\ell''}\Big\}\frac{\log p}{n}\ge \frac{5\lambda_1\binom{m}{2}\log p}{n-1},
\end{align*}
where the last inequality is satisfied by choosing $C_\gamma$ large
enough. Accordingly, for any given $Q_\alpha$, the probability that
\begin{equation*}
\frac{n-1}{\lambda_1\binom{m}{2}}\max_{j<k}\hat U_{jk}\ge 5\log p > 4\log p+(\mu_1-2)\log\log p-\frac{\Lambda}{\lambda_1}+Q_\alpha
\end{equation*}
tends to $1$ as $p$ goes to infinity.  The proof is thus completed.
\end{proof}


\subsubsection{Proof of Theorem \ref{crl:normal}}

\begin{proof}[Proof of Theorem \ref{crl:normal}]
In view of Corollary \ref{crl:power_eg}, the results follow from Lemma \ref{lem:normal} and the fact that $D_{jk}, R_{jk}, \tau^*_{jk}\asymp \Sigma_{jk}^2$ as $\Sigma_{jk}\to 0$, which has been shown in the proof of Lemma \ref{lem:normal}.
\end{proof}

\subsection{Proofs for Section \ref{sec:discussion} of the main paper}

\subsubsection{Proof of Theorem \ref{thm:ya}}

\begin{proof}[Proof of Theorem \ref{thm:ya}]
The proof of Theorem \ref{thm:ya} hinges on
the identity \eqref{eq:identity}, the fact that random vectors of
continuous margins almost surely have no ties among the values of each
coordinate, and that $\E h_D\geq 0$ and $\E h_R\geq 0$ (see \citet[p.~547]{MR0029139} and \citet[p.~490]{MR0125690}).    

The identity \eqref{eq:identity} now gives that $\E h_{\tau^*}\ge 0$ and that
$\E h_{\tau^*}=0$ if and only if  $\E h_D=\E h_R=0$, which in turn implies
independence of the considered pair of random variables.
\end{proof}

\subsubsection{Proof of Proposition \ref{prop:circ}}

\begin{proof}[Proof of Proposition \ref{prop:circ}]

The copula of $(X,Y)^\top$ is given by \citet[p.~56]{MR2197664}:
\begin{equation*}
C(u,v)=\begin{cases}
\min(u,v),                  &  \text{ if }|u-v|\ge \frac{1}{2},\\
\max(0,u+v-1),              &  \text{ if }|u+v-1|\ge \frac{1}{2},\\
\frac{u+v}{2}-\frac{1}{4},  &  \text{ otherwise}.
\end{cases}
\end{equation*}
We summarize the copula in Figure \ref{fig:copula1}.
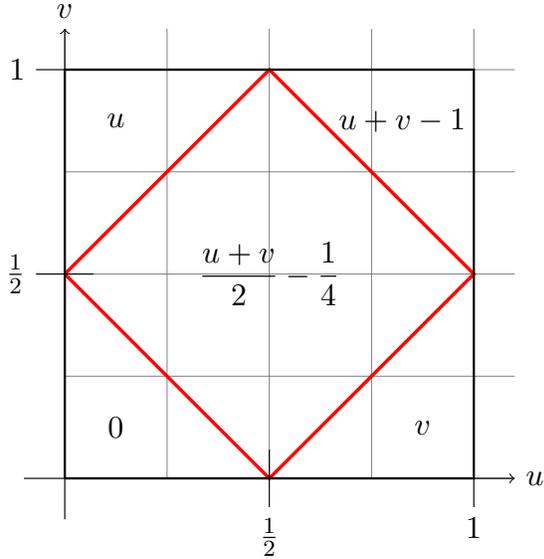
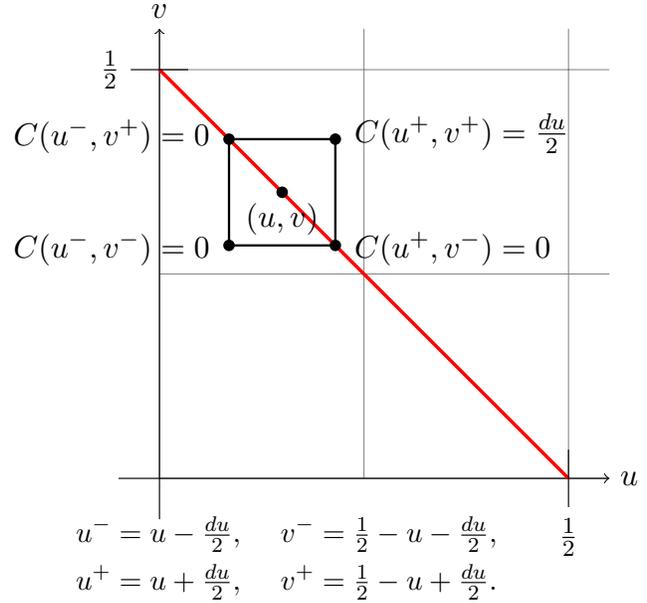
\begin{figure}[!htbp]
\begin{subfigure}[t]{0.45\textwidth}
\resizebox{!}{3in}{
\begin{tikzpicture}[scale=5]
  \draw[style=help lines,step=0.25cm] (0,0) grid +(1.1,1.1);

  \draw[->] (-0.1,0) -- (1.1,0) node[right] {$u$};
  \draw[->] (0,-0.1) -- (0,1.1) node[above] {$v$};
  \draw[-,very thick,red] (0,0.5) -- (0.5,1) -- (1,0.5) -- (0.5,0) -- (0, 0.5);
  \draw[-,thick] (0,0) -- (0,1) -- (1,1) -- (1,0) -- (0,0);
  \draw (0.125,0.125) node {$0$};
  \draw (0.125,0.875) node {$u$};
  \draw (0.875,0.125) node {$v$};
  \draw (0.825,0.875) node {$u+v-1$};
  \draw (0.5,0.5) node {$\displaystyle \frac{u+v}{2}-\frac{1}{4}$};
  
  \foreach \x/\xtext in {0.5/\frac{1}{2}, 1/1}
  \draw[shift={(\x,0)}] (0pt,2pt) -- (0pt,-2pt) node[below] {$\xtext$};
  \foreach \y/\ytext in {0.5/\frac{1}{2}, 1/1}
  \draw[shift={(0,\y)}] (2pt,0pt) -- (-2pt,0pt) node[left] {$\ytext$};
\end{tikzpicture}}
\caption{The copula of the circular uniform distribution with its support marked in red.}\label{fig:copula1}
\end{subfigure}
~
\begin{subfigure}[t]{0.45\textwidth}
\resizebox{!}{3in}{
\begin{tikzpicture}[scale=10]
  \draw[style=help lines,step=0.25cm] (0,0) grid +(0.55,0.55);

  \draw[->] (-0.05,0) -- (0.55,0) node[right] {$u$};
  \draw[->] (0,-0.05) -- (0,0.55) node[above] {$v$};
  \draw[-,very thick,red] (0.5,0) -- (0, 0.5);
  
  \draw[-,thick] (0.085,0.285) -- (0.215,0.285) -- (0.215,0.415) -- (0.085,0.415) -- (0.085,0.285);
  \fill[black] (0.085,0.285) circle (.2pt);
  \draw (0.075,0.315) node[below left] {$C(u^{-},v^{-})=0$};
  \fill[black] (0.215,0.285) circle (.2pt);
  \draw (0.225,0.315) node[below right] {$C(u^+,v^{-})=0$};
  \fill[black] (0.085,0.415) circle (.2pt);
  \draw (0.075,0.385) node[above left] {$C(u^{-},v^{+})=0$};
  \fill[black] (0.215,0.415) circle (.2pt);
  \draw (0.225,0.385) node[above right] {$C(u^+,v^{+})=\frac{du}{2}$};
  \fill[black] (0.15,0.35) circle (.2pt);
  \draw (0.15,0.35) node[below] {$(u,v)$};
  
  \foreach \x/\xtext in {0.5/\frac{1}{2}}
  \draw[shift={(\x,0)}] (0pt,1pt) -- (0pt,-1pt) node[below] {$\xtext$};
  \foreach \y/\ytext in {0.5/\frac{1}{2}}
  \draw[shift={(0,\y)}] (1pt,0pt) -- (-1pt,0pt) node[left] {$\ytext$};
\end{tikzpicture}}
\begin{align*}
\\[-4.75em]
~~~u^{-}&=u-\tfrac{du}{2}, &\!\!\! v^{-}&=\tfrac{1}{2}-u-\tfrac{du}{2},\\
~~~u^{+}&=u+\tfrac{du}{2}, &\!\!\! v^{+}&=\tfrac{1}{2}-u+\tfrac{du}{2}.
\\[-2.6em]
\end{align*}
\caption{Integral on part of the support.}\label{fig:copula2}
\end{subfigure}
\caption{The copula of the circular uniform distribution.}
\end{figure}

Since both $X$ and $Y$ are continuous, by the arguments in \citet{MR619291}, we obtain
\begin{align*}
\E h_D&=30\int \{F(x,y)-F_1(x)F_2(y)\}^2 dF(x,y)\\
                          &=30\int \{C(u,v)-uv\}^2 dC(u,v)\\
\text{and}~~~
\E h_R&=90\int \{F(x,y)-F_1(x)F_2(y)\}^2 dF_1(x)dF_2(y)\\
                          &=90\int \{C(u,v)-uv\}^2 du dv.
\end{align*}

We first compute $\E h_D$. Notice that $\partial^2 C(u,v)/\partial u\partial v=0$ in $[0,1]\times[0,1]$ except for the support of $C(u,v)$ (marked in red in Figure \ref{fig:copula1}). Therefore, we only need to compute the integral on the support consisting of four line segments. In Figure \ref{fig:copula2}, we illustrate how to find $dC(u,v)$ on the line segment from $(0,1/2)$ to $(1/2,0)$ (denoted by $\mathcal{C}_1$). We have
\begin{equation*}
dC(u,v)=C(u^{+},v^{+})-C(u^{+},v^{-})-C(u^{-},v^{+})+C(u^{-},v^{-})=\frac{du}{2},
\end{equation*}
and thus the integral on the line segment $\mathcal{C}_1$ is given by
\begin{equation*}
30\int_{\mathcal{C}_1} \{C(u,v)-uv\}^2 dC(u,v)=30\int_0^{1/2} \Big\{0-u\Big(\frac{1}{2}-u\Big)\Big\}^2\frac{du}{2}=\frac{1}{64}.
\end{equation*}
We can evaluate the integral on the other three line segments (denoted
by $\mathcal{C}_2,\mathcal{C}_3,\mathcal{C}_4$, respectively)
similarly, and we find
\begin{equation*}
\E h_D=30\int_{\mathcal{C}_1+\mathcal{C}_2+\mathcal{C}_3+\mathcal{C}_4} \{C(u,v)-uv\}^2 dC(u,v)=\frac{1}{16}.
\end{equation*}

The computation of $\E h_R=90\int \{C(u,v)-uv\}^2 du dv=1/16$ is
standard, and we omit details. Finally, using the identity \eqref{eq:identity}, we deduce that $\E h_{\tau^*}=1/16$.
\end{proof}

\section{More comments on $\tau^*$}

First of all, we show that the identity \eqref{eq:identity} in the main paper may be false when ties
 exist. 
 
\begin{example}\label{remark:BDY}
If we take $\mz_i=(\lfloor(i+2)/3\rfloor,i)^{\top}$
 for $i\in[6]$, then
\begin{align*}
&\mbinom{6}{5}^{-1}\sum_{1\leq i_1<\cdots<i_5\leq 6}h_D(\mz_{i_1},\ldots,\mz_{i_5})=1/2, ~~~h_R(\mz_1,\ldots,\mz_6)=3/2, \\
\text{and}~~~ 
&\mbinom{6}{4}^{-1}\sum_{1\leq i_1<\cdots<i_4\leq 6}h_{\tau^*}(\mz_{i_1},\ldots,\mz_{i_4})=3/5.
\end{align*}
\end{example}

  In view of Example~\ref{remark:BDY}, the proof of Theorem
  \ref{thm:ya} cannot be directly extended to pairs consisting of both
  discrete and continuous random variables, and the question if
  Bergsma--Dassios's conjecture is correct remains open in
  that regard. However, 
  by the
  Lebesgue decomposition theorem, in order to prove 
  Bergsma--Dassios's conjecture it suffices to  prove
  the case where the pair follows a mixture of discrete and singular
  measures.

We now provide a second proof  of Theorem~\ref{thm:ya} {for the absolute continuity case only. It} connects the correlation measures raised by
\citet{MR3178526} and the one in the
proof of Proposition~9 in \citet{zbMATH03366369}.  We believe the
resulting alternative representation of the population $\tau^*$ is of
independent interest, e.g., from the point of view of multivariate
extensions of $\tau^*$ as considered by \citet{MR3842884}.

\begin{proposition}\label{prop:future}
For any pair of absolutely continuous random variables $(X,Y)^\top\in\mathbb{R}^2$ with joint distribution function $F(x,y)$ and marginal distribution functions $F_1(x),F_2(y)$, we have 
\begin{align*}
                       &\frac{1}{18}\,\E h_{\tau^*}\\
\stackrel  {(i)}{=}\;  &\int F^2 d(F+F_1F_2)-\int F^2 d(FF_1)-2\int FF_1 d(FF_2)+\int FF_1d(F^2)+\frac1{18}\\
\stackrel {(ii)}{=}\,  &\int F^2 dF-2\int FF_1F_2dF+2\int F^2dF_1dF_2-\frac19\\
\stackrel{(iii)}{=}    &\int(F-F_1F_2)^2 dF + 2\int(F-F_1F_2)^2 dF_1dF_2\\
                 =\,\, &\frac{1}{30}\,\E h_D + \frac{1}{45}\,\E h_R,
\end{align*}
where the term on the righthand side of the identity (ii) is Yanagimoto's correlation measure.
\end{proposition}

\begin{proof}[Proof of Proposition~\ref{prop:future}]
We prove identities $(i)$--$(iii)$ sequentially. 
Let $\Psi_1,\Psi_2,\Psi_3$ denote the expressions on the right-hand side of identities $(i)$, $(ii)$, $(iii)$, respectively.\\

{\bf Identity $(i)$. }
Let $\{(X_i,Y_i)^\top\}_{i\in[4]}$ be four independent realizations of $(X,Y)^\top$. For Bergsma--Dassios--Yanagimoto's $\tau^*$, we have, by Equation (6) in \citet{MR3178526},
\begin{align}\label{eqn:BDiden}
\frac1{18}\,\E h_{\tau^*}=\; &\frac13\,\Pr\{\max(X_1,X_2)<\min(X_3,X_4)\textand\max(Y_1,Y_2)<\min(Y_3,Y_4)\}\notag\\
    &+\frac13\,\Pr\{\max(X_1,X_2)<\min(X_3,X_4)\textand\max(Y_3,Y_4)<\min(Y_1,Y_2)\}\notag\\
    &-\frac23\,\Pr\{\max(X_1,X_2)<\min(X_3,X_4)\textand\max(Y_1,Y_3)<\min(Y_2,Y_4)\}.
\end{align}
We study the three terms in \eqref{eqn:BDiden} separately, starting from the first term. Using Fubini's theorem, we get
\begin{align}\label{eqn:compu1}
    &\Pr\{\max(X_1,X_2)<\min(X_3,X_4)\textand\max(Y_1,Y_2\}<\min(Y_3,Y_4)\}\notag\\
=\; &\int\Pr\{\max(X_1,X_2)<x\textand\max(Y_1,Y_2)<y\}d\Pr\{\min(X_3,X_4)\le x\textand\min(Y_3,Y_4)\le y\}\notag\\
=\; &\int F(x,y)^2d\Pr\{\min(X_3,X_4)\le x\textand\min(Y_3,Y_4)\le y\},
\end{align}
where
\begin{equation*}
\Pr\{\min(X_3,X_4)\le x\textand\min(Y_3,Y_4)\le y\}
=\Pr\{(A\cup B)\cap(C\cup D)\}=\Pr(I\cup II\cup III\cup IV)
\end{equation*}
and $A:=\{X_3\le x\}$, $B:=\{X_4\le X\}$, $C:=\{Y_3\le y\}$, $D:=\{Y_4\le y\}$,
\begin{align*}
I  &:=A\cap C=\{X_3\le x,Y_3\le y\},      & II &:=A\cap D=\{X_3\le x,Y_4\le y\},\\
III&:=B\cap C=\{X_4\le x,Y_3\le y\},      & IV &:=B\cap D=\{X_4\le x,Y_4\le y\}.
\end{align*}
From the inclusion--exclusion principle, we obtain
\begin{align}\label{eqn:incexc}
    &\Pr\{\min(X_3,X_4)\le x\textand\min(Y_3,Y_4)\le y\}\notag\\
=\; &\Pr(I)+\Pr(II)+\Pr(III)+\Pr(IV)\notag\\
    &-\Pr(I\cap II)-\Pr(I\cap III)-\Pr(I\cap IV)-\Pr(II\cap III)-\Pr(II\cap IV)-\Pr(III\cap IV)\notag\\
    &+\Pr(I\cap II\cap III)+\Pr(I\cap II\cap IV)+\Pr(I\cap III\cap IV)+\Pr(II\cap III\cap IV)\notag\\
    &-\Pr(I\cap II\cap III\cap IV)\notag\\
=\; &F+F_1F_2+F_1F_2+F-FF_2-FF_1-F^2-F^2-FF_1-FF_2+F^2+F^2+F^2+F^2-F^2\notag\\
=\; &2F+2F_1F_2-2FF_1-2FF_2+F^2.
\end{align}
Plugging \eqref{eqn:incexc} into \eqref{eqn:compu1} implies that
\begin{align}\label{eqn:term1}
    &\Pr\{\max(X_1,X_2)<\min(X_3,X_4)\textand\max(Y_1,Y_2\}<\min(Y_3,Y_4)\}\notag\\
=\; &\int F^2 d(2F+2F_1F_2-2FF_1-2FF_2+F^2).
\end{align}

The second term in \eqref{eqn:BDiden} can be written as
\begin{align}\label{eqn:compu2}
    &\Pr\{\max(X_1,X_2)<\min(X_3,X_4)\textand\max(Y_3,Y_4)<\min(Y_1,Y_2)\}\notag\\
=\; &\Pr\{\max(X_1,X_2)<\min(X_3,X_4)\}\notag\\
    &-\Pr\{\max(X_1,X_2)<\min(X_3,X_4)\textand\min(Y_1,Y_2)\le\max(Y_3,Y_4)\}\notag\\
=\; &\int \Pr\{\max(X_1,X_2)<x\}d\Pr\{\min(X_3,X_4)\le x\}\notag\\
    &-\Pr\{\max(X_1,X_2)<x\textand\min(Y_1,Y_2)\le y\}d\Pr\{\min(X_3,X_4)\le x\textand\max(Y_3,Y_4)\le y\},
\end{align}
where we have
\begin{align}\label{eqn:sos}
    \Pr\{\min(X_3,X_4)\le x\}=\Pr(A\cup B)=2F_1-F_1^2,
\end{align}
and
\begin{align}
    &\Pr\{\max(X_1,X_2)<x\textand\min(Y_1,Y_2)\le y\}\notag\\
=\; &\Pr\{\max(X_1,X_2)<x\textand Y_1\le y\}+\Pr\{\max(X_1,X_2)<x\textand Y_2\le y\}\notag\\
    &-\Pr\{\max(X_1,X_2)<x\textand\max(Y_1,Y_2)\le y\}\notag\\
=\;&2FF_1-F^2,
\end{align}
and
\begin{align}\label{eqn:sss}
    &\Pr(\min\{X_3,X_4\}\le x\textand\max\{Y_3,Y_4\}\le y)\notag\\
=\; &\Pr[\{(X_3\le x)\cup(X_4\le x)\}\cap \{(Y_3\le y)\cap(Y_4\le y)\}]\notag\\
=\; &\Pr[\{A\cap(C\cap D)\}\cup \{B\cap(C\cap D)\}]\notag\\
=\; &2FF_2-F^2.
\end{align}
Plugging \eqref{eqn:sos}--\eqref{eqn:sss} into \eqref{eqn:compu2} yields
\begin{align}\label{eqn:term2}
    &\Pr\{\max(X_1,X_2)<\min(X_3,X_4)\textand\max(Y_3,Y_4)<\min(Y_1,Y_2)\}\notag\\
=\; &\int F_1^2 d(2FF_1-F^2)-\int (2FF_1-F^2) d(2FF_2-F^2).
\end{align}

Next we handle the third term in \eqref{eqn:BDiden}. We have by symmetry that
\begin{align}\label{eqn:term3_sym1}
    &\Pr\{\max(X_1,X_2)<\min(X_3,X_4)\textand\max(Y_1,Y_3)<\min(Y_2,Y_4)\}\notag\\
=\; &\Pr\{\max(X_1,X_2)<\min(X_4,X_3)\textand\max(Y_1,Y_4)<\min(Y_2,Y_3)\}\notag\\
=\; &\Pr\{\max(X_2,X_1)<\min(X_3,X_4)\textand\max(Y_2,Y_3)<\min(Y_1,Y_4)\}\notag\\
=\; &\Pr\{\max(X_2,X_1)<\min(X_4,X_3)\textand\max(Y_2,Y_4)<\min(Y_1,Y_3)\}.
\end{align}
We also notice that
\begin{align}\label{eqn:term3_sym2}
    &\Pr\{\max(X_1,X_2)<\min(X_3,X_4)\notag\\
=\; &\Pr\{\max(X_1,X_2)<\min(X_3,X_4)\textand\max(Y_1,Y_2)<\min(Y_3,Y_4)\}\notag\\
    &+\Pr\{\max(X_1,X_2)<\min(X_3,X_4)\textand\max(Y_3,Y_4)<\min(Y_1,Y_2)\}\notag\\
    &+\Pr\{\max(X_1,X_2)<\min(X_3,X_4)\textand\max(Y_1,Y_3)<\min(Y_2,Y_4)\}\notag\\
    &+\Pr\{\max(X_1,X_2)<\min(X_3,X_4)\textand\max(Y_1,Y_4)<\min(Y_2,Y_3)\}\notag\\
    &+\Pr\{\max(X_1,X_2)<\min(X_3,X_4)\textand\max(Y_2,Y_3)<\min(Y_1,Y_4)\}\notag\\
    &+\Pr\{\max(X_1,X_2)<\min(X_3,X_4)\textand\max(Y_2,Y_4)<\min(Y_1,Y_3)\}
\end{align}
assuming marginal continuity of $(X,Y)^{\top}$.
Combining \eqref{eqn:term3_sym1} and \eqref{eqn:term3_sym2} gives
\begin{align}\label{eqn:term3}
    &\Pr\{\max(X_1,X_2)<\min(X_3,X_4)\textand\max(Y_1,Y_3)<\min(Y_2,Y_4)\}\notag\\
=\; &\frac14\Big[\Pr\{\max(X_1,X_2)<\min(X_3,X_4)\notag\\
    &-\Pr\{\max(X_1,X_2)<\min(X_3,X_4)\textand\max(Y_1,Y_2)<\min(Y_3,Y_4)\}\notag\\
    &-\Pr\{\max(X_1,X_2)<\min(X_3,X_4)\textand\max(Y_3,Y_4)<\min(Y_1,Y_2)\}\Big]\notag\\
=\; &\frac14\Big\{\int (2FF_1-F^2) d(2FF_2-F^2) - \int F^2 d(2F+2F_1F_2-2FF_1-2FF_2+F^2)\Big\}.
\end{align}
The identity $(i)$ follows by plugging \eqref{eqn:term1}, \eqref{eqn:term2}, and \eqref{eqn:term3} into \eqref{eqn:BDiden}.\\

{\bf Identity $(ii)$. }
Next we prove that $\Psi_1-\Psi_2=0$. A straightforward computation gives
\begin{align}\label{eqn:smart1}
    \Psi_1-\Psi_2
=\; &-\int F^2 d(F_1F_2)-\int F^2 d(FF_1)-2\int FF_1 d(FF_2)+\int FF_1d(F^2)+2\int FF_1F_2dF+\frac1{6}\notag\\
=\; &-\iint F^2\frac{\partial F_1}{\partial x}\frac{\partial F_2}{\partial y} dxdy
-\iint F^2\frac{\partial F_1}{\partial x}\frac{\partial F}{\partial y}dxdy
+\iint F^2F_1\frac{\partial^2 F}{\partial x\partial y}dxdy\notag\\
&-2\iint FF_1\frac{\partial F}{\partial x}\frac{\partial F_2}{\partial y}dxdy
+2\iint FF_1\frac{\partial F}{\partial x}\frac{\partial F}{\partial y}dxdy
+\frac16.
\end{align}
To further simplify \eqref{eqn:smart1}, notice that
\begin{align}\label{eqn:smart2}
 \iint\Big(F^2\frac{\partial F_1}{\partial x}\frac{\partial F}{\partial y}+2FF_1\frac{\partial F}{\partial x}\frac{\partial F}{\partial y}+F^2F_1\frac{\partial^2 F}{\partial x\partial y}\Big)dxdy
=\iint\frac{\partial^2 (F^3F_1/3)}{\partial x\partial y}dxdy
=\frac13.
\end{align}
Adding \eqref{eqn:smart1} and \eqref{eqn:smart2} together yields
\begin{align*}
\Psi_1-\Psi_2=\; &-\Big(\iint F^2\frac{\partial F_1}{\partial x}\frac{\partial F_2}{\partial y} dxdy
+2\iint FF_1\frac{\partial F}{\partial x}\frac{\partial F_2}{\partial y}dxdy\Big)
-2\iint F^2\frac{\partial F_1}{\partial x}\frac{\partial F}{\partial y}dxdy
+\frac12\\
=\; &-\int\frac{\partial F_2}{\partial y}\int \frac{\partial (F^2F_1)}{\partial x} dxdy
-2\int \frac{\partial F_1}{\partial x}\int F^2\frac{\partial F}{\partial y}dydx
+\frac12\\
=\; &-\int\frac{\partial F_2}{\partial y}F_2^2 dy
-2\int \frac{\partial F_1}{\partial x}\frac{F_1^3}{3} dx
+\frac12\\
=\; &-\frac{1}{3}-2\Big(\frac{1}{12}\Big)+\frac12=0,
\end{align*}
which completes the proof of identity $(ii)$.\\

{\bf Identity $(iii)$. } 
This identity was discovered by \citet{zbMATH03366369}.
To see this, it suffices to show that
\begin{equation*}
\Psi_3-\Psi_2=\int F_1^2F_2^2dF - 4\int FF_1F_2dF_1dF_2 + 2\int F_1^2F_2^2dF_1dF_2 +\frac{1}{9}=0.
\end{equation*}

We start from the identity
\begin{align}\label{eqn:iden3_1}
1=\iint\frac{\partial^2 (FF_1^2F_2^2)}{\partial x\partial y}dxdy
=\; &\iint F_1^2F_2^2\frac{\partial^2 F}{\partial x\partial y}dxdy
  +\iint 4FF_1F_2\frac{\partial F_1}{\partial x}\frac{\partial F_2}{\partial y}dxdy\notag\\
 &+\iint 2F_1^2F_2\frac{\partial F}{\partial x}\frac{\partial F_2}{\partial y}dxdy
  +\iint 2F_1F_2^2\frac{\partial F_1}{\partial x}\frac{\partial F}{\partial y}dxdy.
\end{align}
We also note that
\begin{align}
    &\iint 2FF_1F_2\frac{\partial F_1}{\partial x}\frac{\partial F_2}{\partial y}dxdy+\iint F_1^2F_2\frac{\partial F}{\partial x}\frac{\partial F_2}{\partial y}dxdy\notag\\
=\; &\int F_2\frac{\partial F_2}{\partial y}\int \frac{\partial (FF_1^2)}{\partial x}dx dy\notag\\
=\; &\int F_2^2\frac{\partial F_2}{\partial y} dy=\frac13,
\end{align}
and
\begin{align}
    &\iint 2FF_1F_2\frac{\partial F_1}{\partial x}\frac{\partial F_2}{\partial y}dxdy+\iint F_1F_2^2\frac{\partial F_1}{\partial x}\frac{\partial F}{\partial y}dxdy\notag\\
=\; &\int F_1\frac{\partial F_1}{\partial x}\int \frac{\partial (FF_2^2)}{\partial y}dy dx\notag\\
=\; &\int F_1^2\frac{\partial F_1}{\partial x} dx=\frac13,
\end{align}
and
\begin{align}\label{eqn:iden3_2}
\iint 2F_1^2F_2^2\frac{\partial F_1}{\partial x}\frac{\partial F_2}{\partial y}dxdy=2\int F_1^2\frac{\partial F_1}{\partial x}dx\int F_2^2\frac{\partial F_2}{\partial y}dy=2\Big(\frac{1}{3}\Big)\Big(\frac{1}{3}\Big)=\frac{2}{9}.
\end{align}
Combining \eqref{eqn:iden3_1}--\eqref{eqn:iden3_2} concludes the claim.
\end{proof}

{
\section{Additional simulation results}\label{sec:additional-sim}

%

First, we report the sizes and powers of the proposed
tests with simulation-based critical values ($M=5,000$)
as shown in Table \ref{tab:size-power}. The table shows
results only for {Examples~\ref{eg:sim-size}, \ref{eg:sim-power3}, and \ref{eg:sim-power4}} as
the simulated powers under Example~\ref{eg:sim-power2} were all perfectly one.  
It can be observed that all sizes are now well controlled, with powers of the proposed
tests only slightly different from the ones without using simulation.

{
\renewcommand{\tabcolsep}{2.5pt}
\renewcommand{\arraystretch}{1.05}
\begin{table}[t]
\centering
\caption{Empirical sizes and powers of simulation-based rejection threshold in Examples \ref{eg:sim-size}--\ref{eg:sim-power4} (The powers under Example~\ref{eg:sim-power2} are all perfectly $1.000$ and hence omitted)}{
\footnotesize
\begin{tabular}{ccC{0.375in}C{0.375in}C{0.375in}c
                  C{0.375in}C{0.375in}C{0.375in}c
                  C{0.375in}C{0.375in}C{0.375in}c
                  C{0.375in}C{0.375in}C{0.375in}}
$n$ & $p$ &  DHS$_D$ & DHS$_R$ & DHS$_{\tau^*}\!$ && 
             DHS$_D$ & DHS$_R$ & DHS$_{\tau^*}\!$ && 
             DHS$_D$ & DHS$_R$ & DHS$_{\tau^*}\!$ && 
             DHS$_D$ & DHS$_R$ & DHS$_{\tau^*}\!$ \\
    &     &  \multicolumn{3}{c}{Example \ref{eg:sim-size}\ref{sim:1a}}   &&
             \multicolumn{3}{c}{Example \ref{eg:sim-power3}\ref{sim:4a}} &&
             \multicolumn{3}{c}{Example \ref{eg:sim-power3}\ref{sim:4b}} &&
             \multicolumn{3}{c}{Example \ref{eg:sim-power3}\ref{sim:4c}} \\
100 &  50 & 0.053 & 0.053 & 0.053 && 0.964 & 0.964 & 0.965 && 0.746 & 0.651 & 0.694 && 0.639 & 0.591 & 0.611 \\
    & 100 & 0.051 & 0.051 & 0.050 && 0.955 & 0.954 & 0.955 && 0.731 & 0.636 & 0.676 && 0.638 & 0.581 & 0.607 \\
    & 200 & 0.045 & 0.045 & 0.044 && 0.943 & 0.944 & 0.945 && 0.698 & 0.602 & 0.643 && 0.609 & 0.549 & 0.580 \\
    & 400 & 0.045 & 0.046 & 0.046 && 0.930 & 0.931 & 0.932 && 0.674 & 0.577 & 0.624 && 0.592 & 0.524 & 0.557 \\
    & 800 & 0.054 & 0.051 & 0.051 && 0.921 & 0.921 & 0.923 && 0.651 & 0.548 & 0.594 && 0.567 & 0.490 & 0.526 \\
200 &  50 & 0.050 & 0.053 & 0.051 && 0.991 & 0.991 & 0.991 && 0.896 & 0.853 & 0.872 && 0.822 & 0.800 & 0.810 \\
    & 100 & 0.048 & 0.048 & 0.047 && 0.984 & 0.985 & 0.985 && 0.874 & 0.824 & 0.847 && 0.803 & 0.775 & 0.787 \\
    & 200 & 0.046 & 0.045 & 0.044 && 0.983 & 0.984 & 0.984 && 0.852 & 0.794 & 0.820 && 0.785 & 0.757 & 0.769 \\
    & 400 & 0.051 & 0.058 & 0.055 && 0.983 & 0.984 & 0.984 && 0.842 & 0.778 & 0.805 && 0.766 & 0.738 & 0.751 \\
    & 800 & 0.042 & 0.044 & 0.046 && 0.978 & 0.978 & 0.979 && 0.809 & 0.746 & 0.776 && 0.741 & 0.708 & 0.727 \\
\vspace{-.5em}\\
    &     &   \multicolumn{3}{c}{Example \ref{eg:sim-power4}\ref{sim:5a}} 
          &&  \multicolumn{3}{c}{Example \ref{eg:sim-power4}\ref{sim:5b}}  
          &&  \multicolumn{3}{c}{Example \ref{eg:sim-power4}\ref{sim:5c}} \\
100 &  50 & 0.081 & 0.085 & 0.080 && 0.096 & 0.094 & 0.096 && 0.121 & 0.124 & 0.126 \\
    & 100 & 0.079 & 0.074 & 0.077 && 0.074 & 0.074 & 0.074 && 0.088 & 0.090 & 0.092 \\
    & 200 & 0.052 & 0.059 & 0.056 && 0.067 & 0.069 & 0.068 && 0.072 & 0.072 & 0.074 \\
    & 400 & 0.064 & 0.064 & 0.064 && 0.059 & 0.057 & 0.056 && 0.059 & 0.058 & 0.065 \\
    & 800 & 0.051 & 0.048 & 0.048 && 0.058 & 0.054 & 0.052 && 0.061 & 0.064 & 0.059 \\
200 &  50 & 0.099 & 0.099 & 0.098 && 0.110 & 0.114 & 0.112 && 0.115 & 0.120 & 0.115 \\
    & 100 & 0.060 & 0.064 & 0.063 && 0.081 & 0.084 & 0.080 && 0.090 & 0.091 & 0.087 \\
    & 200 & 0.066 & 0.067 & 0.071 && 0.046 & 0.046 & 0.044 && 0.080 & 0.070 & 0.079 \\
    & 400 & 0.058 & 0.062 & 0.058 && 0.060 & 0.070 & 0.069 && 0.059 & 0.058 & 0.058 \\
    & 800 & 0.045 & 0.049 & 0.050 && 0.052 & 0.050 & 0.050 && 0.061 & 0.060 & 0.062 \\
\end{tabular}}
\label{tab:size-power}
\end{table} 

}

Next, in order to interpret the power in Examples~\ref{eg:sim-power2}--\ref{eg:sim-power4}, 
we consider the following example.

\begin{customexample}{\ref{eg:sim-power2}--\ref{eg:sim-power4} (continued)}
We consider modified data drawn as
\[\mX_{\alpha}=\alpha\mX+(1-\alpha)\bm{\cE}\]
where $\alpha\in[0,1]$ represents the level of a desired signal, $\mX$ is the same as that in Examples \ref{eg:sim-power2}--\ref{eg:sim-power4}, respectively, and $\bm{\cE}\sim N_{p}(0,\fI_{p})$ is independent of $\mX$.
\end{customexample}

The relationships between empirical powers (5,000 replicates) based on observations from $\mX_{\alpha}$ and the value $\alpha$ for Examples \ref{eg:sim-power2}--\ref{eg:sim-power4} (continued) are summarized in Figures \ref{fig:sim-power2}--\ref{fig:sim-power5}.
As expected, the power of each test is monotonically increasing in
$\alpha$, i.e., as the signal increases.
Similar patterns as we discussed for Examples \ref{eg:sim-power2}--\ref{eg:sim-power4} can be found here. It can be noticed that, 
the three proposed tests, followed by LD$_{\tau^*}$, uniformly dominate the other tests in Examples \ref{eg:sim-power2} and \ref{eg:sim-power3} (continued) that are sparse settings. 
}

\begin{landscape}

\begin{figure}
\centering
\includegraphics[height=3in]{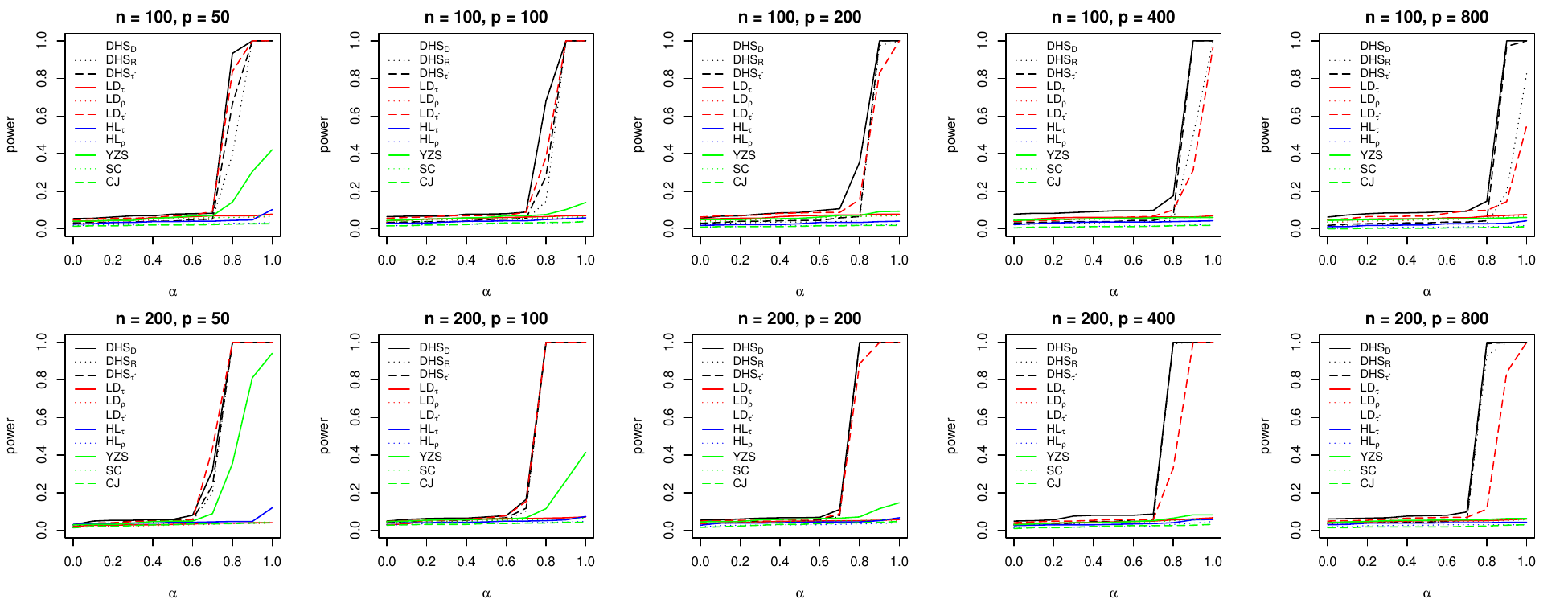}\\[-1em]
\rule{8in}{0.4pt}
\includegraphics[height=3in]{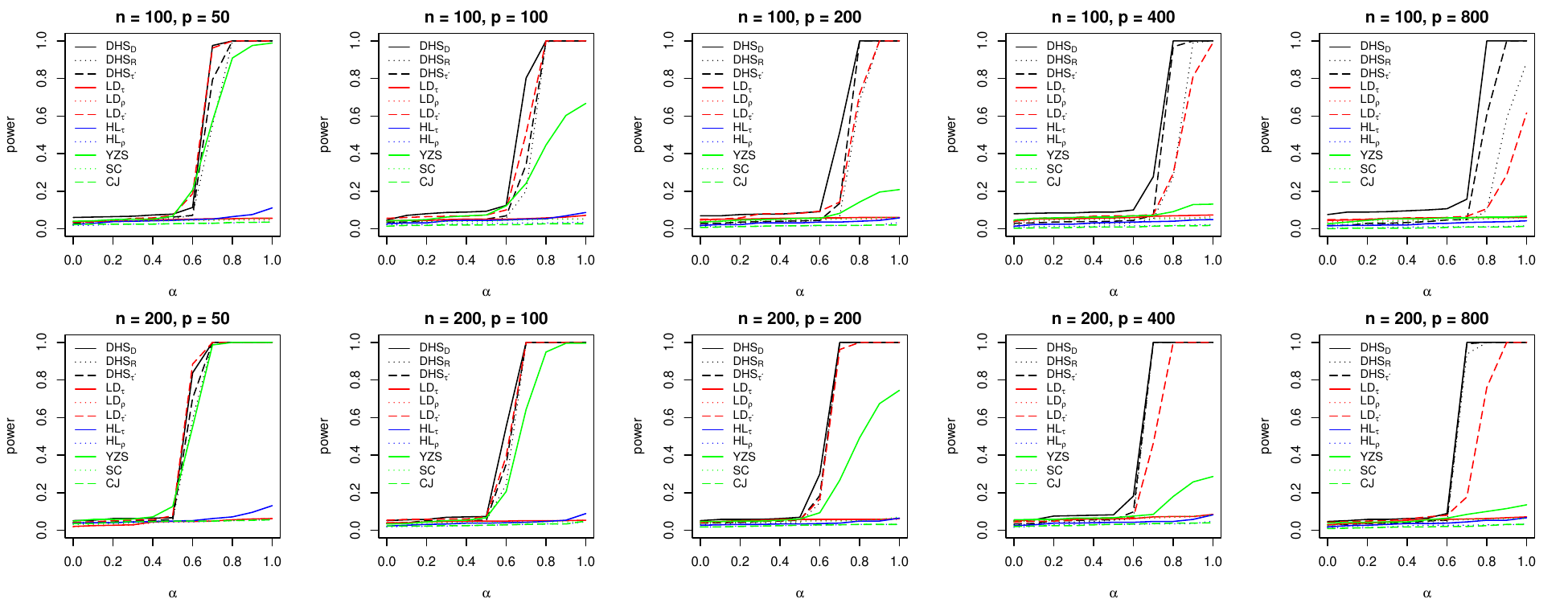}
\caption{Empirical powers of the eleven competing tests in continued Example~\ref{eg:sim-power2}\ref{sim:3a'} (first two rows) and continued Example~\ref{eg:sim-power2}\ref{sim:3b'} (last two rows). 
The y-axis represents the power based on 5,000 replicates and 
the x-axis represents the level of a desired signal.}\label{fig:sim-power2}
\end{figure}

\begin{figure}
\centering
\includegraphics[height=3in]{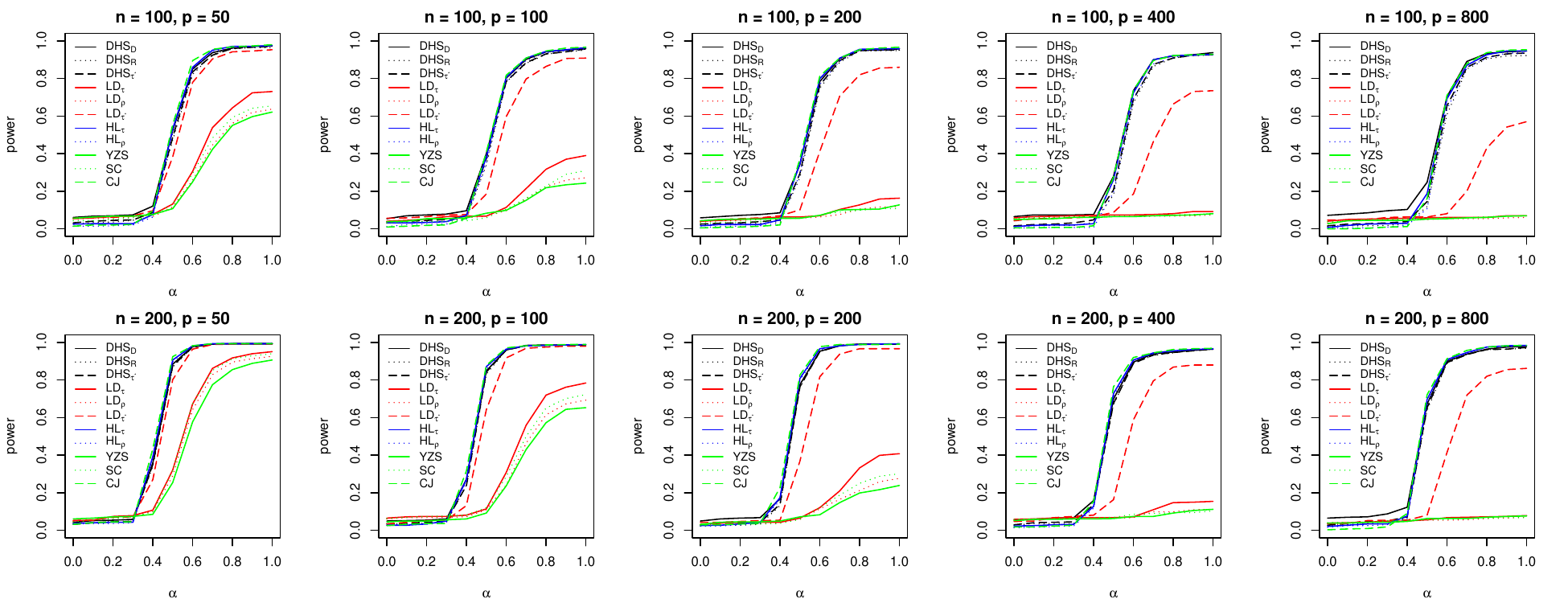}\\[-1em]
\rule{8in}{0.4pt}
\includegraphics[height=3in]{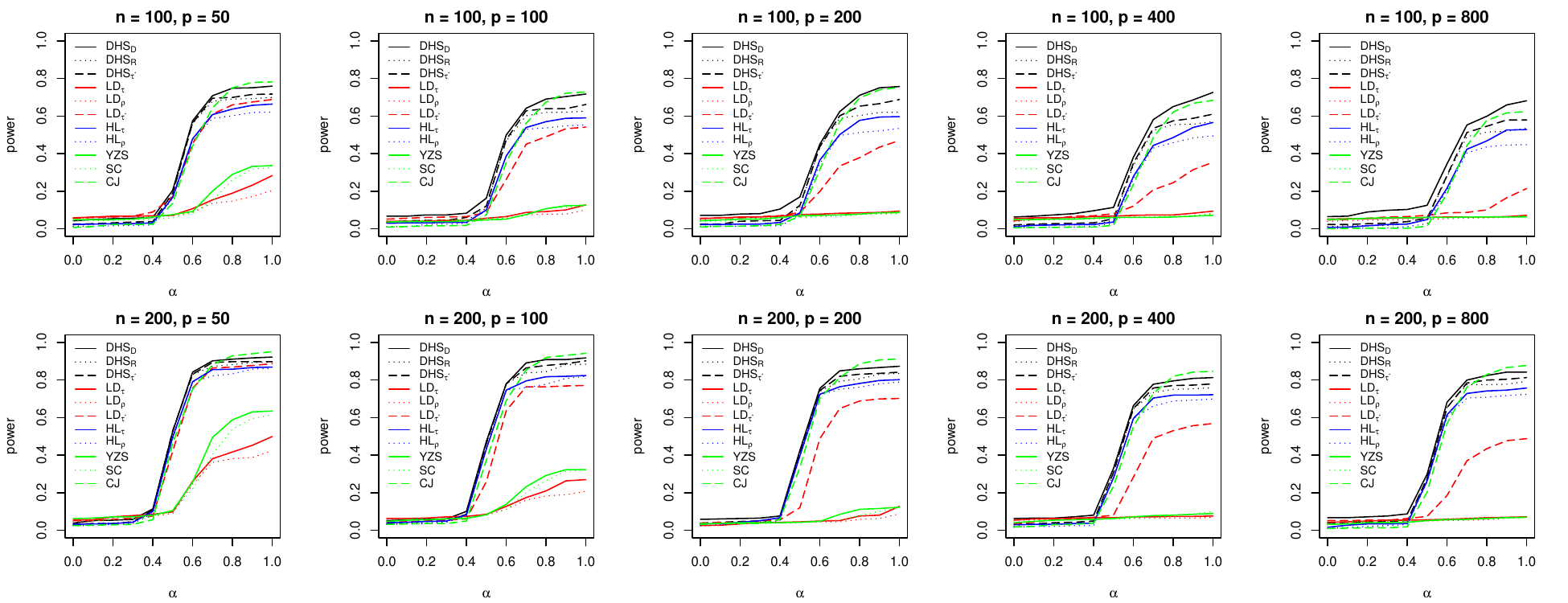}
\caption{Empirical powers of the eleven competing tests in continued Example~\ref{eg:sim-power3}\ref{sim:4a} (first two rows) and continued Example~\ref{eg:sim-power3}\ref{sim:4b} (last two rows). 
The y-axis represents the power based on 5,000 replicates and 
the x-axis represents the level of a desired signal.}\label{fig:sim-power3}
\end{figure}

\begin{figure}
\centering
\includegraphics[height=3in]{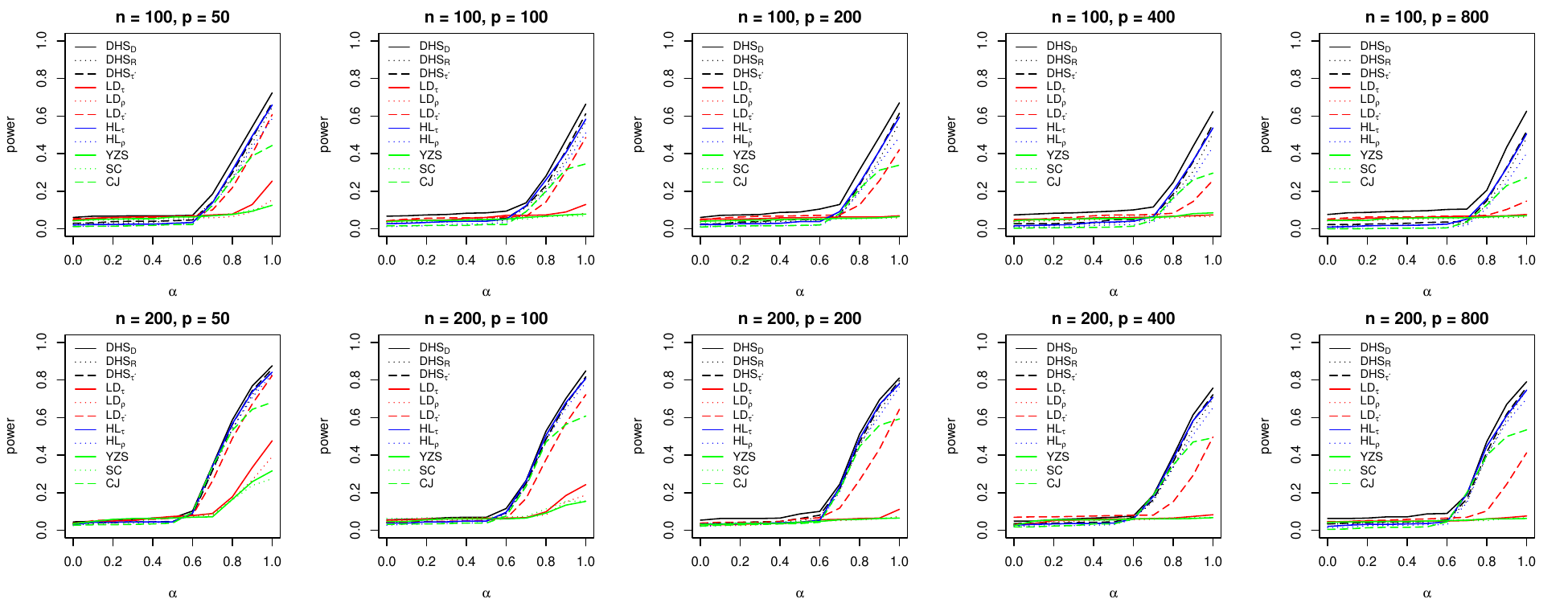}\\[-1em]
\rule{8in}{0.4pt}
\includegraphics[height=3in]{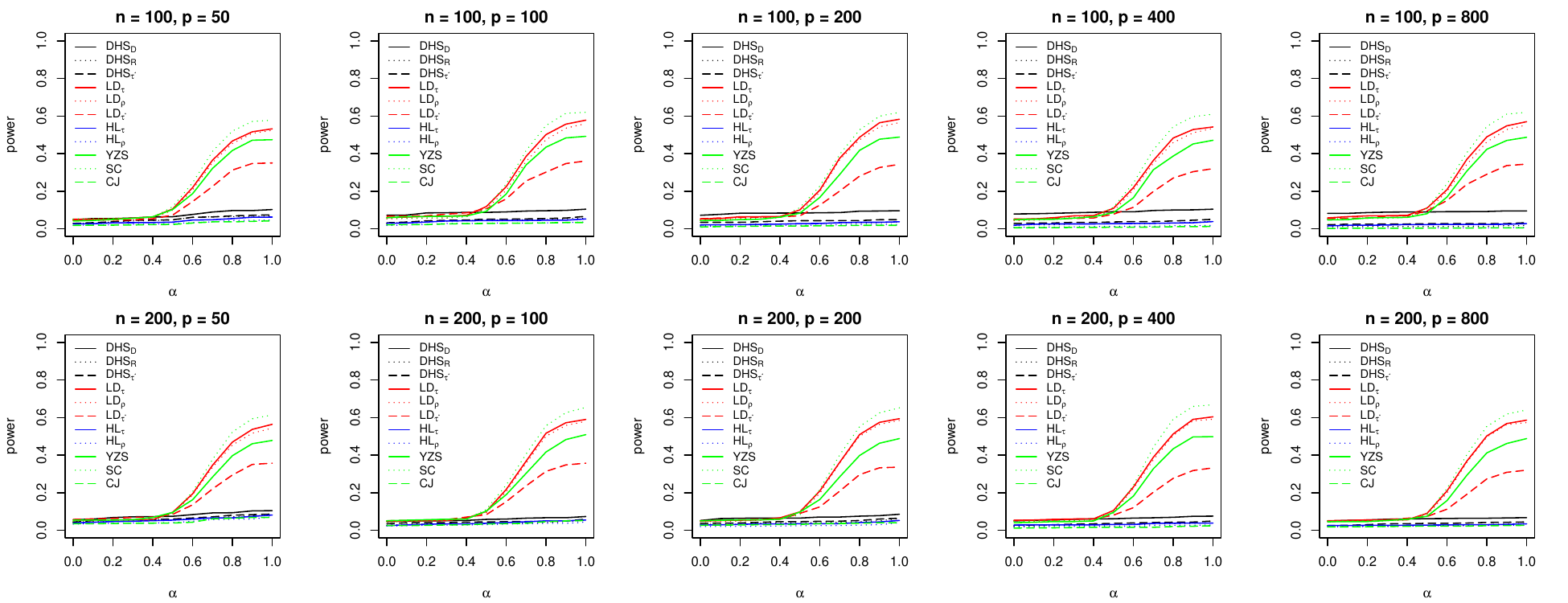}
\caption{Empirical powers of the eleven competing tests in continued Example~\ref{eg:sim-power3}\ref{sim:4c} (first two rows) and continued Example~\ref{eg:sim-power4}\ref{sim:5a} (last two rows). 
The y-axis represents the power based on 5,000 replicates and 
the x-axis represents the level of a desired signal.}\label{fig:sim-power4}
\end{figure}

\begin{figure}
\centering
\includegraphics[height=3in]{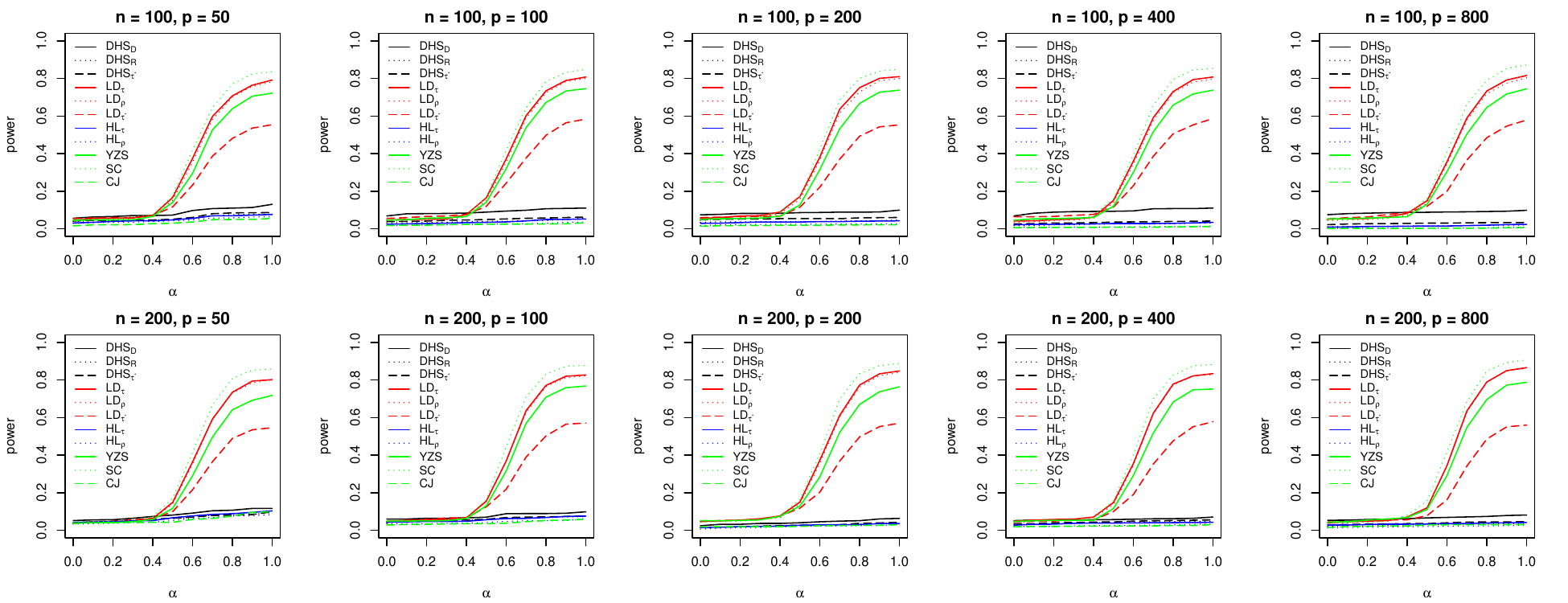}\\[-1em]
\rule{8in}{0.4pt}
\includegraphics[height=3in]{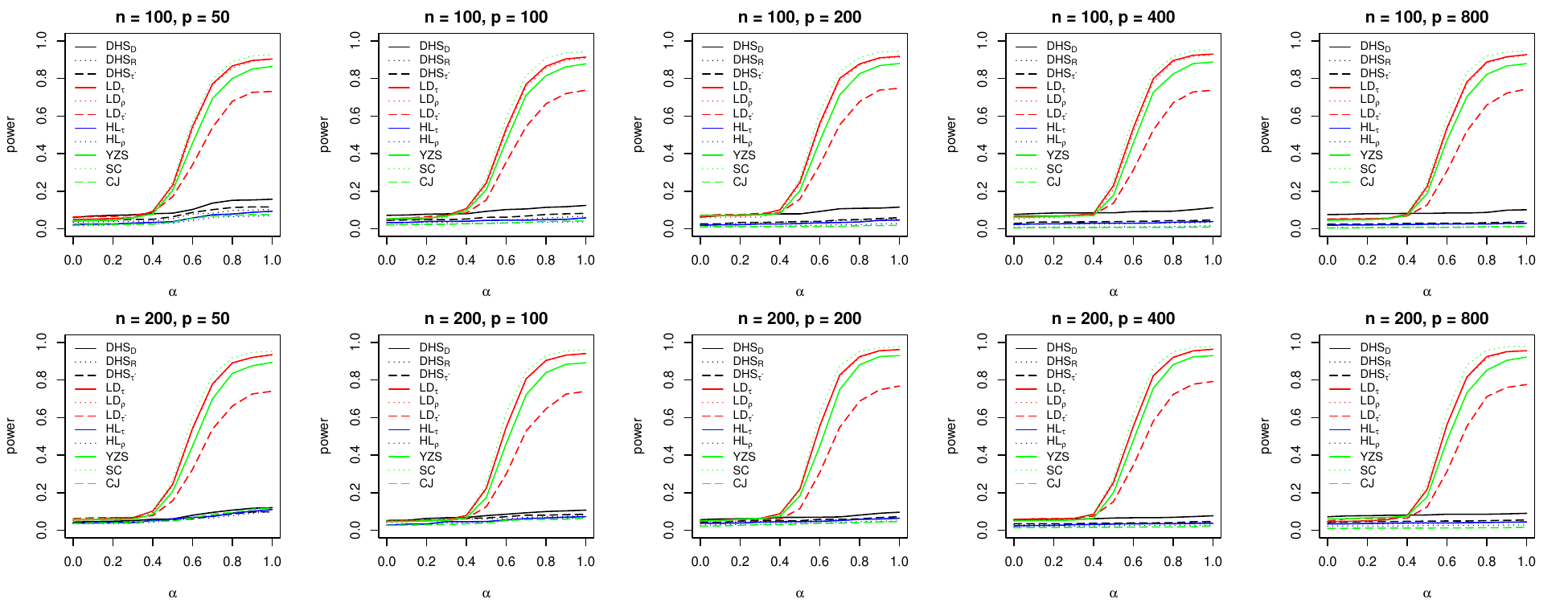}
\caption{Empirical powers of the eleven competing tests in continued Example~\ref{eg:sim-power4}\ref{sim:5b} (first two rows) and continued Example~\ref{eg:sim-power4}\ref{sim:5c} (last two rows). 
The y-axis represents the power based on 5,000 replicates and 
the x-axis represents the level of a desired signal.}\label{fig:sim-power5}
\end{figure}

\end{landscape}


\end{document}